\def\titlerunning#1{\gdef\titrun{#1}}
\def\author#1{\gdef\autrun{\def\and{\unskip, }#1}\gdef\@author{#1}}
\def\keywords#1{\par\medskip
\noindent\textbf{Keywords.} #1}
\def\subjclass#1{\par\smallskip
\noindent\textbf{MSC (2010):} #1}
\newtheorem{thm}{Theorem}[section]
\newtheorem{lem}[thm]{Lemma}
\newtheorem{prop}[thm]{Proposition}
\theoremstyle{definition}
\newtheorem{defin}[thm]{Definition}
\newtheorem{rem}[thm]{Remark}
\newtheorem{exa}[thm]{Example}
\numberwithin{equation}{section}
\newtheorem*{notations}{Notations}
\DeclareMathOperator*{\esssup}{ess\,sup}
\let\@fnsymbol\@alph
\begin{document}

\baselineskip=17pt

\titlerunning{Radial quasilinear problems}

\title{Compactness and existence results for quasilinear elliptic problems with singular or vanishing potentials}

\author{Marino Badiale\thanks{Dipartimento di Matematica ``Giuseppe Peano'', Universit\`{a} degli Studi di
Torino, Via Carlo Alberto 10, 10123 Torino, Italy. 
e-mails: \texttt{marino.badiale@unito.it}, \texttt{michela.guida@unito.it}}
\textsuperscript{,}\thanks{Partially supported by the PRIN2012 grant ``Aspetti variazionali e
perturbativi nei problemi differenziali nonlineari''.}
\ -\ Michela Guida\textsuperscript{a}
\ -\ Sergio Rolando\thanks{Dipartimento di Matematica e Applicazioni, Universit\`{a} di Milano-Bicocca,
Via Roberto Cozzi 53, 20125 Milano, Italy. e-mail: \texttt{sergio.rolando@unito.it}}%
}

\date{
}
\maketitle

\begin{abstract}
Given $\geq 3$, $1<p<N$, two measurable functions $V\left(r \right)\geq 0$, $K\left(r\right)> 0$, and a continuous function $A(r) >0$ ($r>0$), we study the quasilinear elliptic equation
\[
-\mathrm{div}\left(A(|x| )|\nabla u|^{p-2} \nabla u\right) u+V\left( \left| x\right| \right) |u|^{p-2}u= K(|x|) f(u) \quad \text{in }\mathbb{R}^{N}. 
\]
We find existence of nonegative solutions by the application of variational methods, for which we have to study the compactness of the embedding of a suitable
function space $X$ into the sum of Lebesgue spaces $L_{K}^{q_{1}}+L_{K}^{q_{2}}$, and thus into $L_{K}^{q}$ ($=L_{K}^{q}+L_{K}^{q}$) as a particular case. 
Our results do not require any compatibility between how the potentials $A$, $V$ and $K$ behave at the origin and at infinity, and 
essentially rely on power type estimates of the relative growth of $V$ and $K$, not of the potentials separately. The nonlinearity $f$ has a double-power behavior, whose standard example is $f(t) = \min \{ t^{q_1 -1}, t^{q_2 -1}  \}$, recovering the usual case of a single-power behavior when $q_1 = q_2$.

\keywords{Weighted Sobolev spaces, compact embeddings, quasilinear elliptic PDEs, unbounded or decaying potentials}
\subjclass{Primary 46E35; Secondary 46E30, 35J92, 35J20}
\end{abstract}

\section{Introduction}

In this paper we pursue the work we made in papers \cite{BGR_I,BGR_II, BGR_p, BZ, GR-nls,BGR_bilap}, 
where we studied embedding and compactness results for weighted Sobolev spaces. These results then made possible to get existence and multiplicity results, by variational methods, for several kinds of elliptic equations in $\mathbb{R}^N$.

In the present paper we face quasilinear elliptic equations in presence of a radial potential on the derivatives, that is, equations of the following kind
\begin{equation}\label{EQ}
-\mathrm{div}\left(A(|x|) |\nabla u|^{p-2} \nabla u\right) u+V\left( \left| x\right| \right) |u|^{p-2}u= K(|x|) f(u) \quad \text{in }\mathbb{R}^{N}
\end{equation}
where $1<p<N$, $f:\mathbb{R}\rightarrow \mathbb{R}$ is a continuous nonlinearity satisfying $f\left( 0\right) =0$, and $V, A, K$ are given potentials. We study such equation by variational methods, so we introduce a suitable functional space $X$ (see section 2) and we say that $u\in X$ is a \textit{weak solution}\emph{\ }to (\ref{EQ}) if 
\begin{equation}
\int_{\mathbb{R}^{N}}|\nabla u|^{p-2}\nabla u\cdot \nabla h\,dx+\int_{\mathbb{R}^{N}}V\left(
\left| x\right| \right) |u|^{p-2}uh\,dx=\int_{\mathbb{R}^{N}}K\left( \left| x\right|
\right) f\left( u\right) h\,dx\quad \textrm{for all }h\in X.
\end{equation}
\noindent These solutions are (at least formally) critical points of the Euler
functional 
\begin{equation}
I\left( u\right) :=\frac{1}{p}\left\| u\right\| ^{p}-\int_{\mathbb{R}
^{N}}K\left( \left| x\right| \right) F\left( u\right) dx,  \label{I:=}
\end{equation}
where $F\left( t\right) :=\int_{0}^{t}f\left( s\right) ds$ and $|| \cdot ||$ is the norm on $X$ (see section 2 below). Then the problem
of existence is easily solved if $A\equiv 1$, $V$ does not vanish at infinity, $K$ is
bounded and $f(t)= t^{q-1}$, because standard embeddings theorems for $X$ are available (for suitable $q$'s). As we let $V$ and $K$ to vanish, 
or to go to infinity, as $|x| \rightarrow 0$ or $|x|\rightarrow +\infty$, and we introduce the potential $A$ on the derivatives, the usual embeddings theorems for Sobolev spaces are not 
available anymore, and new embedding theorems need to be proved. This has been done in several papers: see e.g. the references in 
\cite{BGR_I,BGR_II,GR-nls} for a bibliography concerning the usual Laplace equation, \cite{Anoop,Su12,Cai-Su-Sun,SuTian12,Su-Wang-Will-p,Yang-Zhang,Zhang13,BPR}
for equations involving the $p$-laplacian, and \cite{BZ, Su-Wang} and the references therein for problems with a potential $A$ on the derivatives.
 
The main novelty of our approach (in \cite{BGR_I,BGR_II, BGR_p, BZ} and in the present paper) is two-folded. Firstly, we look for embeddings of $X$ not into a single Lebesgue space $L_{K}^{q}$ 
but into a sum of Lebesgue spaces $L_{K}^{q_1}+L_{K}^{q_2}$. This allows to study separately the behaviour of the potentials $V,K$ at $0$ and $\infty$, and to assume independent set of hypotheses about these behaviours. Secondly, we assume hypotheses not on $V$ and $K$ separately but on their ratio, so allowing asymptotic behaviors of general kind for the two potentials.

Thanks to this second novelty we obtain embedding results, and thus existence results for equation \eqref{EQ}, for more general kinds of potentials than the power type ones (cf. Example \ref{ex2} below), which are essentially the only ones considered in the existing literature (cf. \cite{Su-Wang}).
Moreover, thanks to the first novelty, we get new results also for power type potentials (cf. Example \ref{ex2} below).

This paper is organized as follows. In Section 2 we introduce the hypotheses on $A,V,K$ and the function spaces $D_A$ and $X$ in which we will work. 
In Section \ref{COMP} we state a general result concerning the embedding properties of $X$  into $L_{K}^{q_{1}}+L_{K}^{q_{2}}$ (Theorem \ref{THM(cpt)})
and some explicit conditions ensuring that the embedding is compact
(Theorems \ref{THM0} and \ref{THM1}). The general
result is proved in Section \ref{SEC:1}, the explicit conditions in Section 
\ref{SEC:2}. In Section \ref{SEC: ex} we apply our embedding results to get existence of non negative solutions for (\ref{EQ}).
In section \ref{SEC:EX} we give some examples to explain the novelty of our results.

\begin{notations}

We end this introductory section by collecting
some notations used in the paper.

\noindent  $\bullet $ $\mathbb{R}_{+} = ( 0, +\infty ) = \left\{ x\in \mathbb{R} : x>0 \right\}$.

\noindent $\bullet $ For every $R>0$, we set $B_{R} =\left\{ x\in \mathbb{R}
^{N}:\left| x\right| <r\right\} $.

\noindent $\bullet$ $\omega_N$ is the $(N-1)-$dimensional measure of the unit sphere $\partial B_1 = \left\{ x\in \mathbb{R}
^{N}:\left| x\right| =1 \right\} $.

\noindent $\bullet $ For any subset $A\subseteq \mathbb{R}^{N}$, we denote $
A^{c}:=\mathbb{R}^{N}\setminus A$. If $A$ is Lebesgue measurable, $\left|
A\right| $ stands for its measure.



\noindent $\bullet $ $C_{\mathrm{c}}^{\infty }(\Omega )$ is the space of the
infinitely differentiable real functions with compact support in the open
set $\Omega \subseteq \mathbb{R}^{N}$. If $\Omega$ has radial symmetry, $C_{\mathrm{c}, r}^{\infty }( \Omega )$ is the subspace 
of $C_{\mathrm{c}}^{\infty }(\Omega )$ made of radial functions.

\noindent $\bullet $ For any measurable set $A\subseteq \mathbb{R}^{N}$, $
L^{q}(A)$ and $L_{\mathrm{loc}}^{q}(A)$ are the usual real Lebesgue spaces.
If $\rho :A\rightarrow \mathbb{R}_{+}$ is a measurable function, then $%
L^{p}(A,\rho \left( z\right) dz)$ is the real Lebesgue space with respect to
the measure $\rho \left( z\right) dz$ ($dz$ stands for the Lebesgue measure
on $\mathbb{R}^{N}$). In particular, if $K:\mathbb{R}_{+}\rightarrow \mathbb{R}
_{+} $ is measurable, we denote $L_{K}^{q}\left( A\right) :=L^{q}\left(
A,K\left( \left| x\right| \right) dx\right) $.

\noindent $\bullet $ $p^{\prime }:=p/(p-1)$ is the H\"{o}lder-conjugate
exponent of $p.$

\end{notations}

\section{Hypotheses and preliminary results  }

\par \noindent Throughout this paper we assume $N\geq 3$ and 
$1<p<N$. We will make use of the following hypotheses on $A,V,K$:

\begin{itemize}

\item[$\left( \mathbf{A}\right) $]  $A : \mathbb{R}_{+} \rightarrow \mathbb{R}_{+} $ is continuous and there exist real numbers $p-N<a_{0},a_{\infty} \leq p$ and $c_{0},c_{\infty}>0$ such that:
$$
c_{0} \leq \liminf_{r\rightarrow 0^+}\frac{A(r)}{r^{a_{0}}} \leq \limsup_{r\rightarrow 0^+}\frac{A(r)}{r^{a_{0}}} < + \infty, 
$$
$$
c_{\infty}\leq \liminf_{r\rightarrow +\infty}\frac{A(r)}{r^{a_{\infty}}} \leq \limsup_{r\rightarrow +\infty}\frac{A(r)}{r^{a_{\infty}}} <+\infty .
$$

\item[$\left( \mathbf{V}\right) $]  $V:\mathbb{R}_{+}\rightarrow
\left[ 0,+\infty \right) $ is measurable and such that $V\in
L_{\mathrm{loc}}^{1}\left( \mathbb{R}_{+} \right) ;$

\item[$\left( \mathbf{K}\right) $]  $K:\mathbb{R}_{+} \rightarrow \mathbb{R}_{+} $ is measurable and such that 
$K\in L_{\mathrm{loc}}^{s}\left( \mathbb{R}_{+} \right) $ for some $s>1$.

\end{itemize}

\begin{rem}
\label{remA}
It is easy to check that the hypothesis $\left( \mathbf{A}\right) $ implies that, for each $R>0$, there exist $C_{0}=C_{0}(R)>0$ and $C_{\infty}=C_{\infty}(R)>0$ such that 
\begin{equation}\label{remA.1}
A(|x|)\geq C_{0}|x|^{a_{0}} \quad   \mbox{for all} \,\, 0<|x|\leq R,
\end{equation}
\begin{equation}\label{remA.2}
A(|x|)\geq C_{\infty}|x|^{a_{\infty}} \quad  \mbox{for all} \,\, |x|\geq R.
\end{equation}
\end{rem}

We now introduce the space functions in which we will work. These are the following:

\begin{itemize}

\item  $D_A$ is the closure of $C_{\mathrm{c}, r}^{\infty }( \mathbb{R}^{N} ) $ with respect to the norm $||u||_A := \left( \int_{\mathbb{R}^{N}} A(|x|)|\nabla u|^p 
dx \right)^{1/p}$ (see also Definition \ref{DEF:D_A} below),

\item $X := D_A \cap  L^{p}(\mathbb{R}^{N}, V (|x|) dx )$ with norm $||u|| := \left( ||u||_{A}^{p} + ||u||_{L^{p}(\mathbb{R}^{N}, V (|x|) dx )}^{p} \right)^{1/p}$.

\end{itemize}

\noindent The rest of this section is devoted to elucidate the characteristics of the functions in $D_A$. In particular we prove some relevant pointwise estimates and embedding results. To be precise, we define 

$$
S_A := \left\{  u \in C_{\mathrm{c}, r}^{\infty }( \mathbb{R}^{N} )  \, \Big| \, \int_{\mathbb{R}^{N}}A(|x|) |\nabla u |^p dx <+\infty  \right\}.
$$

\noindent $S_A$ is a linear subspace of $C_{\mathrm{c}, r}^{\infty }( \mathbb{R}^{N} ) $ and 
$||u||_A = \left( \int_{\mathbb{R}^{N}}A(|x|) |\nabla u |^p dx \right)^{1/p}$ is a norm on it. The next lemmas gives the relevant pointwise estimates for the functions in $S_A$. In all this paper, for any radial function $u$, with a little abuse of notations, we will write $u(x)= u(|x|)= u(r)$ if $r =|x|$.

\begin{lem}
\label{A1}
Assume the hypothesis $\left( \mathbf{A}\right) $. Fix $R_0>0$. Then there exists a constant $C=C(N, R_0, p, a_{\infty} )>0$ such that for all $u \in S_A$ one has
\begin{equation}
\label{est1}
|u(x)|\leq C\, |x|^{-\frac{N+a_{\infty}-p}{p}}\, \left( \int_{B^{c}_{R_0}} A(|x|) \, |\nabla u |^p \, dx \right)^{1/p}    \quad \text{for } \;|x|\geq R_0.
\end{equation}
\end{lem}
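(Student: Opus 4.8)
The plan is to reduce a pointwise bound for a radial function to a one-dimensional calculus estimate, then feed in the growth hypothesis on $A$ at infinity. First I would take $u \in S_A$; since $u$ is radial and smooth with compact support, the function $r \mapsto u(r)$ is $C^\infty$ on $(0,\infty)$ and vanishes for large $r$. Hence for $r \geq R_0$ I can write, by the fundamental theorem of calculus,
\begin{equation*}
u(r) = -\int_r^{+\infty} u'(t)\,dt,
\end{equation*}
so that $|u(r)| \leq \int_r^{+\infty} |u'(t)|\,dt$. The idea is now to insert the weight $A(t)^{1/p} t^{(N-1)/p}$ and apply Hölder's inequality with exponents $p$ and $p'$:
\begin{equation*}
|u(r)| \leq \left( \int_r^{+\infty} A(t)\, |u'(t)|^p \, t^{N-1}\, dt \right)^{1/p} \left( \int_r^{+\infty} A(t)^{-p'/p}\, t^{-(N-1)p'/p}\, dt \right)^{1/p'}.
\end{equation*}
The first factor is, up to the constant $\omega_N^{-1/p}$, bounded by $\left( \int_{B_{R_0}^c} A(|x|)|\nabla u|^p\,dx \right)^{1/p}$ once $r \geq R_0$, since passing to polar coordinates $\int_{B_r^c} A(|x|)|\nabla u|^p\,dx = \omega_N \int_r^{+\infty} A(t)|u'(t)|^p t^{N-1}\,dt$ and $B_r^c \subseteq B_{R_0}^c$.

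The second factor is where the hypothesis $(\mathbf{A})$ enters, and this is the step that needs a little care. By Remark \ref{remA}, specifically \eqref{remA.2}, there is $C_\infty = C_\infty(R_0) > 0$ with $A(t) \geq C_\infty t^{a_\infty}$ for all $t \geq R_0$; therefore $A(t)^{-p'/p} \leq C_\infty^{-p'/p} t^{-a_\infty p'/p}$ on $[r,+\infty) \subseteq [R_0,+\infty)$. The remaining integral is then
\begin{equation*}
\int_r^{+\infty} t^{-(a_\infty + N - 1)p'/p}\, dt,
\end{equation*}
which converges precisely because the exponent $-(a_\infty+N-1)p'/p < -1$; this is equivalent to $a_\infty + N - 1 > p/p' = p - 1$, i.e. $a_\infty > p - N$, which is exactly the lower bound on $a_\infty$ imposed in $(\mathbf{A})$. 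Evaluating, the integral equals a constant times $r^{1-(a_\infty+N-1)p'/p}$, and raising to the power $1/p'$ yields $r^{1/p' - (a_\infty+N-1)/p}$. A direct computation shows $\frac{1}{p'} - \frac{a_\infty+N-1}{p} = \frac{p-1}{p} - \frac{a_\infty+N-1}{p} = -\frac{N+a_\infty-p}{p}$, which is the claimed power of $|x|$.

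Collecting the constants $\omega_N^{-1/p}$, $C_\infty^{-1/p}$, and the constant from the $t$-integral — all of which depend only on $N$, $R_0$, $p$, $a_\infty$ — gives the constant $C = C(N,R_0,p,a_\infty)$ in \eqref{est1}. The main (mild) obstacle is simply making sure the convergence of the tail integral is traced back to the structural assumption $a_\infty > p-N$, and keeping the bookkeeping of exponents straight; there are no analytic subtleties, since $u \in S_A \subseteq C^\infty_{\mathrm{c},r}(\mathbb{R}^N)$ makes every manipulation (fundamental theorem of calculus, Hölder, polar coordinates) legitimate without approximation. I would close by remarking that the estimate passes to $D_A$ by density, though for the statement as given, restricted to $S_A$, this is not even needed.
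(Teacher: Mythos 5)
Your proposal is correct and follows essentially the same route as the paper: fundamental theorem of calculus in the radial variable, Hölder with exponents $p$ and $p'$ after inserting the weight $t^{(N-1+a_\infty)/p}$, and the lower bound $A(t)\geq C_\infty t^{a_\infty}$ from Remark \ref{remA}. The only cosmetic difference is that you apply this lower bound to the $p'$-factor (bounding $A^{-p'/p}$ by a power of $t$) whereas the paper applies it to the $p$-factor (bounding $t^{a_\infty}$ by $C_\infty^{-1}A$); the exponent bookkeeping and the role of $a_\infty>p-N$ are identical.
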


\begin{proof}
Assume $u\in S_A$. For $|x|= r\geq R_0$ we have
\begin{equation*}
-u(r)=\int_{r}^{\infty}u'(s)ds.
\end{equation*}
Using the hypothesis $\left( \mathbf{A}\right) $ and H\"older inequality, we obtain
\begin{eqnarray*}
|u(r)| &\leq& \int_{r}^{\infty}|u'(s)|ds 
=\int_{r}^{\infty}|u'(s)|s^{\frac{N+a_{\infty}-1}{p}}s^{-\frac{N+a_{\infty}-1}{p}}ds
\\
&\leq&\left(\int_{r}^{\infty}|u'(s)|^{p}s^{N-1}s^{a_{\infty}}ds\right)^{\frac{1}{p}}\left(\int_{r}^{\infty}s^{-\frac{{N+a_{\infty}-1}}{p-1}} ds\right)^{\frac{p-1}{p}}
\\
&=&  (\omega_{N})^{-\frac{1}{p}}\left(\int_{B_{r}^c}|x|^{a_{\infty}}|\nabla u|^{p}dx\right)^{\frac{1}{p}}\left(\int_{r}^{\infty} s^{-\frac{{N+a_{\infty}-1}}{p-1}} ds\right)^{\frac{p-1}{p}}
\\
&\leq&  C_{\infty} ^{-1/p} \omega_{N}^{-\frac{1}{p}}\left( \frac{p-1}{a_{\infty} +N-p}    \right)^{\frac{p-1}{p}}  \left(\int_{B_{r}^c} A(|x|)|\nabla u|^{p}dx\right)^{\frac{1}{p}}\, r^{-\frac{{N+a_{\infty}-p}}{p}},
\end{eqnarray*}
where $C_{\infty}=C_{\infty}(R_0)$ is the constant in \ref{remA.2}. Hence the thesis follows. 
\end{proof}

\begin{lem}
\label{A2} 
Assume the hypothesis $\left( \mathbf{A}\right) $. Fix $R_0 >0$. Then there exists a constant $C= C(N, R_0, p, a_0 )>0$ such that for all $u\in S_A \cap C_{c,r}^{\infty }(B_{R_0})$ one has
\begin{equation}
|u(x)|\leq C\, |x|^{-\frac{N+a_{0}-p}{p}}\, \left( \int_{B_{R_0}} A(|x|) \, |\nabla u |^p \, dx \right)^{1/p} 
\quad \text{for }0<|x|<R_0.
\end{equation}
\end{lem}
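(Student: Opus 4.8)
The plan is to mimic the proof of Lemma \ref{A1}, but integrating from $0$ up to $r$ rather than from $r$ to $\infty$, exploiting the fact that $u$ now has compact support inside $B_{R_0}$ so that the boundary term at $R_0$ vanishes. Concretely, for $u \in S_A \cap C_{c,r}^\infty(B_{R_0})$ and $0<|x|=r<R_0$, I would write
\[
u(r) = -\int_r^{R_0} u'(s)\,ds = -\int_r^{R_0} u'(s)\, s^{\frac{N+a_0-1}{p}}\, s^{-\frac{N+a_0-1}{p}}\,ds,
\]
using $u(R_0)=0$. Then apply H\"older's inequality with exponents $p$ and $p'$ to split off the factor $|u'(s)|\, s^{(N-1)/p}\, s^{a_0/p}$, obtaining
\[
|u(r)| \leq \left(\int_r^{R_0} |u'(s)|^p s^{N-1} s^{a_0}\,ds\right)^{1/p}\left(\int_r^{R_0} s^{-\frac{N+a_0-1}{p-1}}\,ds\right)^{(p-1)/p}.
\]
The first factor is $\omega_N^{-1/p}\left(\int_{B_{R_0}} |x|^{a_0}|\nabla u|^p\,dx\right)^{1/p}$, and by Remark \ref{remA} (estimate \eqref{remA.1}) this is bounded by $C_0^{-1/p}\omega_N^{-1/p}\left(\int_{B_{R_0}} A(|x|)|\nabla u|^p\,dx\right)^{1/p}$.

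The one genuinely different point — and the main thing to check carefully — is the convergence and size of the second integral $\int_r^{R_0} s^{-\frac{N+a_0-1}{p-1}}\,ds$. Unlike in Lemma \ref{A1}, where the exponent condition $a_\infty+N-p>0$ guaranteed integrability at $+\infty$, here the potential singularity is at $s=0$, so I need the exponent $\frac{N+a_0-1}{p-1}$ to be such that the integral near $0$ is controlled, and I need the resulting bound to produce exactly the power $r^{-\frac{N+a_0-p}{p}}$. Computing the antiderivative: if $\frac{N+a_0-1}{p-1} \neq 1$, then $\int_r^{R_0} s^{-\frac{N+a_0-1}{p-1}}\,ds = \frac{p-1}{N+a_0-p}\left(r^{-\frac{N+a_0-p}{p-1}} - R_0^{-\frac{N+a_0-p}{p-1}}\right)$. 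Since the hypothesis $(\mathbf{A})$ gives $a_0 > p-N$, i.e. $N+a_0-p>0$, the exponent $\frac{N+a_0-p}{p-1}$ is positive, so $r^{-\frac{N+a_0-p}{p-1}} \to \infty$ as $r\to 0^+$ and dominates the constant term; hence $\int_r^{R_0} s^{-\frac{N+a_0-1}{p-1}}\,ds \leq \frac{p-1}{N+a_0-p}\, r^{-\frac{N+a_0-p}{p-1}}$ for $0<r<R_0$. Raising to the power $(p-1)/p$ yields $\left(\frac{p-1}{N+a_0-p}\right)^{(p-1)/p} r^{-\frac{N+a_0-p}{p}}$, which is precisely the desired power of $|x|$.

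Assembling the pieces, I get
\[
|u(x)| \leq C_0^{-1/p}\,\omega_N^{-1/p}\left(\frac{p-1}{N+a_0-p}\right)^{\frac{p-1}{p}} |x|^{-\frac{N+a_0-p}{p}}\left(\int_{B_{R_0}} A(|x|)|\nabla u|^p\,dx\right)^{1/p},
\]
so the constant $C=C(N,R_0,p,a_0)$ can be taken as $C_0(R_0)^{-1/p}\,\omega_N^{-1/p}\left(\frac{p-1}{N+a_0-p}\right)^{(p-1)/p}$, where $C_0(R_0)$ is from \eqref{remA.1}. I expect no real obstacle here — the argument is a routine adaptation — but the point to be careful about is handling the borderline case $\frac{N+a_0-1}{p-1}=1$ separately (where the antiderivative is a logarithm): in that degenerate case one has $N+a_0-p = 0$, which is excluded by the strict inequality $a_0>p-N$ in $(\mathbf{A})$, so it does not actually arise, but it is worth a remark to be safe. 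One should also note that the finiteness of $\int_{B_{R_0}}|x|^{a_0}|\nabla u|^p\,dx$ (equivalently of $\|u\|_A$ restricted to $B_{R_0}$) is guaranteed by $u\in S_A$ together with \eqref{remA.1}, so all integrals written are legitimate.
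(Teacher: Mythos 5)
Your proposal is correct and follows essentially the same route as the paper's proof: integrate $u'$ from $r$ to $R_0$ using $u(R_0)=0$, apply H\"older with the weight split $s^{\pm(N+a_0-1)/p}$, bound $|x|^{a_0}$ by $A(|x|)$ via \eqref{remA.1}, and evaluate $\int_r^{R_0}s^{-\frac{N+a_0-1}{p-1}}\,ds\le\frac{p-1}{N+a_0-p}\,r^{-\frac{N+a_0-p}{p-1}}$ using $a_0>p-N$. Your extra remarks (the excluded logarithmic borderline case and the finiteness of the integrals) are accurate and only add care beyond what the paper records.
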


\begin{proof}
Let $u\in S_A \cap C_{c,r}^{\infty }(B_{R_0})$ and take $|x|=r<R_0$.
Since $u(R_0 )=0$, we have
\begin{equation*}
-u(r)=u(R_0)-u(r)=\int_{r}^{R_0}u'(s)ds.
\end{equation*}
The same arguments of Lemma \ref{A1} yield
\begin{eqnarray*}
|u(r)|&\leq&\int_{r}^{R_0}|u'(s)|ds
\leq\left(\int_{r}^{R_0}|u'(s)|^{p}s^{N-1}s^{a_{0}}ds\right)^{\frac{1}{p}}\left(\int_{r}^{R_0}  s^{-\frac{N+a_{0}-1}{p-1}}  ds\right)^{\frac{p-1}{p}}
\\
&\leq &(\omega_{N})^{-\frac{1}{p}} \left(\int_{B_{R_0} } |x|^{a_0}|\nabla u|^{p}dx\right)^{\frac{1}{p}} \left(\int_{r}^{R_0}s^{-\frac{N+a_{0}-1}{p-1}} ds\right)^{\frac{p-1}{p}}
\\
&\leq & \omega_{N}^{-\frac{1}{p}}  C_0^{-1/p}\, \left(\frac{p-1}{N+a_{0}-p} \right)^{p-1/p} \left( \int_{B_{R_0}} A(|x|) \, |\nabla u |^p \, dx \right)^{1/p} \,   r^{-\frac{N+a_{0}-p}{p}},
\end{eqnarray*}
where $C_0=C_0 (R_0)$ is the constant in \ref{remA.1}. Then the thesis ensues.
\end{proof}

\begin{lem}
\label{ALem}
Assume the hypothesis $\left( \mathbf{A}\right) $. Fix $R_0 >0$. Then there exists a constant $C= C(N, R_0, p,a_0 , a_{\infty})>0$ such that for all $u\in   S_A$ one has
\begin{equation*}
|u(x)|\leq C\, |x|^{-\frac{N+a_{0}-p}{p}}\, \left( \int_{B_{R_0 +1 }} A(|x|) \, |\nabla u |^p \, dx + \int_{B^{c}_{R_0  }} A(|x|) | \nabla u|^p  dx      \right)^{1/p} \quad \text{for } 0<|x|<R_0.
\end{equation*}
\end{lem}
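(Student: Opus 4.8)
The plan is to combine the two one-sided estimates already established in Lemmas \ref{A1} and \ref{A2} by means of a cutoff decomposition. Fix $u\in S_A$ and a smooth radial cutoff function $\varphi\in C^\infty(\mathbb{R}^N)$ with $\varphi\equiv 1$ on $B_{R_0}$, $\varphi\equiv 0$ on $B_{R_0+1}^c$, and $0\le\varphi\le 1$, so that $\|\nabla\varphi\|_\infty\le C(N,R_0)$. Write $u=\varphi u+(1-\varphi)u$. The first piece $\varphi u$ is radial, smooth, and compactly supported in $B_{R_0+1}$, so Lemma \ref{A2} (applied with $R_0+1$ in place of $R_0$) gives, for $0<|x|<R_0$,
\[
|\varphi(x)u(x)|=|u(x)|\le C\,|x|^{-\frac{N+a_0-p}{p}}\left(\int_{B_{R_0+1}}A(|x|)\,|\nabla(\varphi u)|^p\,dx\right)^{1/p},
\]
where I used $\varphi\equiv 1$ on $B_{R_0}$ on the left-hand side.

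The remaining task is to bound $\int_{B_{R_0+1}}A(|x|)|\nabla(\varphi u)|^p\,dx$ by the right-hand side of the claimed inequality. Expanding $\nabla(\varphi u)=\varphi\nabla u+u\nabla\varphi$ and using the elementary inequality $|a+b|^p\le 2^{p-1}(|a|^p+|b|^p)$, the term with $\varphi\nabla u$ is controlled by $2^{p-1}\int_{B_{R_0+1}}A(|x|)|\nabla u|^p\,dx$, which splits into the contribution over $B_{R_0+1}$ (first term on the right) plus nothing extra. The genuinely delicate term is $\int_{B_{R_0+1}}A(|x|)|u|^p|\nabla\varphi|^p\,dx$: since $\nabla\varphi$ is supported in the annulus $R_0\le|x|\le R_0+1$, this is bounded by $C(N,R_0)\int_{\{R_0\le|x|\le R_0+1\}}A(|x|)|u|^p\,dx$, and on this compact annulus $A(|x|)$ is bounded (by continuity of $A$ on $\mathbb{R}_+$) while $|u(x)|$ can be estimated by Lemma \ref{A1}: for $|x|\ge R_0$,
\[
|u(x)|\le C\,|x|^{-\frac{N+a_\infty-p}{p}}\left(\int_{B_{R_0}^c}A(|x|)|\nabla u|^p\,dx\right)^{1/p}\le C'\left(\int_{B_{R_0}^c}A(|x|)|\nabla u|^p\,dx\right)^{1/p}
\]
since $|x|^{-\frac{N+a_\infty-p}{p}}$ is bounded on $R_0\le|x|\le R_0+1$. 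Raising to the $p$-th power and integrating over the (bounded) annulus produces a constant times $\int_{B_{R_0}^c}A(|x|)|\nabla u|^p\,dx$, which is the second term on the right-hand side.

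Putting these bounds together, $\int_{B_{R_0+1}}A(|x|)|\nabla(\varphi u)|^p\,dx\le C\big(\int_{B_{R_0+1}}A(|x|)|\nabla u|^p\,dx+\int_{B_{R_0}^c}A(|x|)|\nabla u|^p\,dx\big)$, and substituting into the displayed estimate for $|u(x)|$ yields the lemma with a constant $C=C(N,R_0,p,a_0,a_\infty)$. The main obstacle is the cross term $u\nabla\varphi$: one must notice that it lives only on the compact annulus where $A$ is comparable to a constant, so that the \emph{far-field} estimate of Lemma \ref{A1} (rather than any information near the origin) is what controls it; no compatibility between $a_0$ and $a_\infty$ is needed precisely because the two regimes are handled by two separate lemmas and glued on a compact set. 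A small bookkeeping point is to make sure the application of Lemma \ref{A2} is legitimate, i.e. that $\varphi u\in S_A\cap C^\infty_{c,r}(B_{R_0+1})$, which holds because $\varphi u$ is radial, smooth, compactly supported, and has finite $A$-weighted Dirichlet energy by the bound just derived.
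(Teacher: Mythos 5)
Your proposal is correct and follows essentially the same route as the paper's proof: multiply by a radial cutoff supported in $B_{R_0+1}$, apply Lemma \ref{A2} in the larger ball, and control the cross term $u\nabla\varphi$ on the compact annulus using the boundedness of $A$ there together with the far-field pointwise bound of Lemma \ref{A1}. No substantive difference.
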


\begin{proof}
Let $u\in   S_A$. Take a radial function $\rho  \in    C_{c,r}^{\infty}(\mathbb{R}^{N})$ such that 
$ \rho (x) \in [0,1]$, $\rho \equiv 1 $ in $B_{R_0} $ and $\rho (x) \equiv 0$ if $|x| \geq R_0 + 1/2$. Hence $\rho \, u \in C_{c,r}^{\infty}(B_{R_0 +1})$, so that we can apply Lemma \ref{A2} (in the ball $B_{R_0 +1} )$ and get
$$ | \rho (x) u (x) | \leq C |x|^{-\frac{N+ a_0 -p}{p}} \left( \int_{B_{R_0 +1}} A(|x|) |\nabla (\rho  u)|^p dx \right)^{1/p}.$$
If $|x| < R_0 $ we have $\rho (x) =1$ and hence
$$ | u (x) | \leq C |x|^{-\frac{N+ a_0 -p}{p}} \left( \int_{B_{R_0 +1}} A(|x|) |\nabla  (\rho u)|^p dx \right)^{1/p}.$$
We also have
$$|\nabla  (\rho u)|^p \leq \left( \rho |\nabla u | + |u| |\nabla \rho | \right)^p \leq C_p \left(\rho^p |\nabla u|^p + |u|^p |\nabla \rho |^p \right)$$
and hence, for $x \in B_{R_0}$,
$$ | u (x) | \leq C |x|^{-\frac{N+ a_0 -p}{p}} C_p^{1/p}\left( \int_{B_{R_0 +1}} A(|x|) |\nabla  u|^p dx + \int_{B_{R_0 +1}} A(|x|) |u|^p \,|\nabla  \rho|^p dx\right)^{1/p}.$$
Moreover
$$\int_{B_{R_0 +1}} A(|x|) |u|^p \,|\nabla  \rho|^p dx \leq C_1 \int_{B_{R_0 +1}\backslash B_{R_0}} A(|x|) |u|^p \, dx$$
where the constant $C_1= \max |\nabla \rho |^p $ depends only on $\rho$, hence on $R_0$. We can now apply Lemma \ref{A1} in the domain $B^{c}_{R_0}$, and we get
$$|u|^p (y) \leq C |y|^{-N- a_{\infty} +p} \int_{B^{c}_{R_0 }} A(|x|) |\nabla  u|^p dx \quad \text{for } |y|> R_0. $$
Recalling that $A$ is bounded in $B_{R_0 +1}\backslash B_{R_0} $, for $y \in B_{R_0 +1}\backslash B_{R_0} $ and $C_2 = 
\max \left\{ A(|y|) \, \big| \, y\in B_{R_0 +1}\backslash B_{R_0} \right\}$ we get
$$A(|y|) |u|^p (y)  \leq C_2 \, C |y|^{-N - a_{\infty} +p} \int_{B^{c}_{R_0 }} A(|x|) |\nabla  u|^p dx$$
and hence, integrating w.r.t. $y \in B_{R_0 +1}\backslash B_{R_0} $, we obtain
$$\int_{B_{R_0 +1}\backslash B_{R_0}}A(|y|) |u|^p (y) dy \leq C_3 \int_{B^{c}_{R_0 }} A(|x|) |\nabla  u|^p dx$$
where $C_3=C_3 (N, a_{\infty}, R_0 , p)$. Pasting all together, for $|x|<R_0$ we get
\begin{eqnarray*}
| u (x) | &\leq &C |x|^{-\frac{N+ a_0 -p}{p}} C_p^{1/p}\left( \int_{B_{R_0 +1}} A(|x|) |\nabla  u|^p dx + \int_{B_{R_0 +1}} A(|x|) |u|^p \,|\nabla  \rho|^p dx\right)^{1/p} 
\\
&\leq &C_4 |x|^{-\frac{N+ a_0 -p}{p}} \left( \int_{B_{R_0 +1}} A(|x|) |\nabla  u|^p dx + \int_{B^{c}_{R_0 }} A(|x|) \,|\nabla  u|^p dx \right)^{1/p}, 
\end{eqnarray*}
where $C_4 =C_4 (N, R_0 , p, a_0, a_{\infty})$. Hence the thesis.
\end{proof}

We can now give a precise definition of $D_A$.

\begin{defin}\label{DEF:D_A}
$D_A$ is the completion of $S_A$ with respect to the norm $|| \cdot ||_A$.
\end{defin}

The pointwise estimates of the previous lemmas imply the following proposition, which gives the main properties of $D_A$. The proof is a simple exercise in funtional analysis and we skip it.

\begin{lem}
	\label{lemdens2}
	
	Assume the hypothesis $\left( \mathbf{A}\right) $. Then the following properties hold.
	
	\begin{itemize}

\item[{$(i)$}] If $u \in D_A$, then $u$ has weak derivatives $D_i u$ in the open set $\Omega=\mathbb{R}^{N} \backslash \{0 \}$ ($i=1,..., N$) and one has $D_i u \in L^p_{loc}(\Omega )$.

\item[{$(ii)$}] If $u \in D_A$, then $\int_{\mathbb{R}^{N}} A(|x|) |\nabla u|^p \, dx < + \infty $ and $||u||_A =\left( \int_{\mathbb{R}^{N}} A(|x|) |\nabla u|^p \, dx
	\right)^{1/p}$ is a norm on $D_A$. With this norm, $D_A$ is a Banach space.

\item[{$(iii)$}] For any $R_0 >0$, there exists a constant $M=M(N, R_0 , a_0 , a_{\infty})>0$ such that for all $u \in D_A$ one has 

$$ |u(x) | \leq M |x|^{-\frac{N+ a_0 -p}{p}} \, ||u||_A , \quad \mbox{for a.e.}\, x \in B_{R_0}, $$

$$ |u(x) | \leq M |x|^{-\frac{N+ a_{\infty} -p}{p}} \, ||u||_A ,\quad \mbox{for a.e.}\,  x \in B^{c}_{R_0}. $$

\end{itemize}

\end{lem}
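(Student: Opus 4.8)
The plan is to deduce all three parts of Lemma \ref{lemdens2} from the pointwise estimates in Lemmas \ref{A1}, \ref{A2}, \ref{ALem} by a standard completion argument, treating the abstract completion $D_A$ as a concrete function space.

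First I would set up the identification of $D_A$ with a space of functions. Since $D_A$ is by definition (Definition \ref{DEF:D_A}) the completion of $(S_A, \|\cdot\|_A)$, any $u \in D_A$ is represented by a $\|\cdot\|_A$-Cauchy sequence $(u_n)$ in $S_A$. Fix $R_0 > 0$. By Lemma \ref{ALem}, for $0 < |x| < R_0$ we have $|u_n(x) - u_m(x)| \leq C\,|x|^{-(N+a_0-p)/p}\big(\int_{B_{R_0+1}} A|\nabla(u_n-u_m)|^p + \int_{B_{R_0}^c} A|\nabla(u_n-u_m)|^p\big)^{1/p} \leq C\,|x|^{-(N+a_0-p)/p}\|u_n-u_m\|_A$, and similarly by Lemma \ref{A1} for $|x| \geq R_0$ with exponent $(N+a_\infty-p)/p$. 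Hence on any spherical shell $\{\varepsilon \le |x| \le \rho\}$ the sequence $(u_n)$ is uniformly Cauchy, so it converges locally uniformly on $\Omega = \mathbb{R}^N \setminus \{0\}$ to a continuous radial function, which we take as the representative of $u$. Passing to the limit in the two pointwise estimates gives $(iii)$ directly, with $M$ depending on the constants furnished by Lemmas \ref{A1} and \ref{ALem} (one checks these constants can be taken uniform in $R_0$ over, say, $R_0$ bounded, or simply records the dependence on $R_0$ as stated).

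Next, for $(i)$ and the first half of $(ii)$: on any open set $\omega \Subset \Omega$, the weighted norm $\|\cdot\|_A$ controls $\|\nabla(\cdot)\|_{L^p(\omega)}$ because $A$ is continuous and strictly positive on the compact closure $\overline\omega \subset \mathbb{R}_+$, hence bounded below there by a positive constant. Therefore $(\nabla u_n)$ is Cauchy in $L^p(\omega)^N$, so $\nabla u_n \to g$ in $L^p(\omega)^N$ for some $g$; combined with $u_n \to u$ locally uniformly (hence in $\mathcal{D}'(\omega)$), a standard closedness-of-the-weak-gradient argument identifies $g = \nabla u$ in the weak sense on $\omega$, giving $D_i u \in L^p_{\mathrm{loc}}(\Omega)$. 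For $(ii)$, the weight $A(|x|)$ is locally bounded on $\Omega$ and $\nabla u_n \to \nabla u$ in $L^p_{\mathrm{loc}}$, while Fatou's lemma applied to $A(|x|)|\nabla u_n|^p$ (along a subsequence converging a.e.) gives $\int_{\mathbb{R}^N} A|\nabla u|^p \le \liminf \int_{\mathbb{R}^N} A|\nabla u_n|^p = \lim \|u_n\|_A^p < \infty$; the reverse inequality, i.e. $\|u_n\|_A \to \|u\|_A$, follows since $(u_n)$ is $\|\cdot\|_A$-Cauchy and the map $u \mapsto (\int A|\nabla u|^p)^{1/p}$ is an isometry on the dense subspace, so it extends to the norm on $D_A$; that $D_A$ is a Banach space is automatic from being a completion. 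One also needs that the representative is well-defined (independent of the Cauchy sequence), which again follows from the pointwise estimates: two equivalent Cauchy sequences differ by a $\|\cdot\|_A$-null sequence, which by Lemmas \ref{A1}/\ref{ALem} converges to $0$ locally uniformly on $\Omega$.

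I expect the only genuinely delicate point to be confirming that the abstract completion really is a \emph{function} space — i.e. that distinct elements of $D_A$ have distinct locally-uniform-limit representatives and that no information is lost near the origin or at infinity. This is exactly what the three pointwise lemmas are designed to guarantee: the estimate blows up at most polynomially as $|x| \to 0$ and decays as $|x| \to \infty$, so a Cauchy sequence cannot ``escape'' in a way invisible to pointwise evaluation on $\Omega$, and $0$ is a genuinely negligible set (a single point). Everything else — the weak-gradient identification, the Fatou/isometry argument for the norm, the Banach-space property — is routine functional analysis, which is presumably why the authors chose to omit it.
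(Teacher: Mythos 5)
Your argument is correct and is precisely the ``simple exercise in functional analysis'' that the paper alludes to and omits: it identifies elements of the completion $D_A$ with their locally uniform limits on $\mathbb{R}^N\setminus\{0\}$ via the pointwise estimates of Lemmas \ref{A1} and \ref{ALem}, and then obtains $(i)$--$(iii)$ by the standard limit-passage, weak-gradient-closedness and isometric-extension arguments. The only cosmetic remark is that the injectivity of this identification (so that $\|\cdot\|_A$ is a norm rather than a seminorm on the image) rests on your weak-gradient identification together with the negligibility of $\{0\}$, not on the pointwise lemmas alone as your closing paragraph suggests; all the needed pieces are nonetheless present in your write-up.
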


We now prove some embedding properties of the space $D_A$. To this aim, we define the exponents $p_{0}, p_{\infty}$ as follows: 
\begin{equation*}
p_{0}:=\frac{pN}{N+a_{0}-p}\,,\quad p_{\infty}:=\frac{pN}{N+a_{\infty}-p}.
\end{equation*}
Recall that $p-N<a_{0},\, a_{\infty}\leq p$ and notice that, from the hypotheses, we have $p_{0}, p_{\infty}\geq p$.

\begin{lem}
\label{A4}

Assume the hypothesis $\left( \mathbf{A}\right) $. For every $R >0$ we have the continuous embeddings
\begin{equation*}
D_A \hookrightarrow L^{p_{0}}(B_{R}) \quad \text{and} \quad D_A \hookrightarrow L^{p_{\infty}}(B^{c}_{R}).
\end{equation*}
\end{lem}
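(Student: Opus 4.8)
The plan is to derive both continuous embeddings directly from the pointwise estimates in Lemma~\ref{lemdens2}$(iii)$, since those estimates reduce the whole matter to checking integrability of explicit powers of $|x|$ on a ball and on the complement of a ball. First I would fix $R>0$ and work with $u\in S_A$ (the general case $u\in D_A$ following by density, because $||\cdot||_{A}$-convergence together with the pointwise bound forces convergence of the relevant $L^{p_0}$ and $L^{p_\infty}$ norms on the prescribed regions).

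For the embedding $D_A\hookrightarrow L^{p_0}(B_R)$ I would apply the first estimate of Lemma~\ref{lemdens2}$(iii)$ with $R_0:=R$, so that $|u(x)|\le M|x|^{-(N+a_0-p)/p}\,||u||_A$ for a.e.\ $x\in B_R$. Raising to the power $p_0$ and integrating over $B_R$ gives
\[
\int_{B_R}|u|^{p_0}\,dx\le M^{p_0}||u||_A^{p_0}\int_{B_R}|x|^{-\frac{(N+a_0-p)p_0}{p}}\,dx.
\]
Now $\frac{(N+a_0-p)p_0}{p}=\frac{(N+a_0-p)}{p}\cdot\frac{pN}{N+a_0-p}=N$, so the exponent is exactly $-N$, and $\int_{B_R}|x|^{-N}\,dx$ diverges. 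So a naive application fails at the borderline, and this is exactly the main obstacle: the pointwise bound alone is too weak near the origin to give $L^{p_0}$ integrability. The remedy is that $p_0$ is the \emph{critical} exponent and one does not expect the pointwise estimate to suffice; instead one should run a genuine Sobolev/Hardy-type argument. Concretely, for $u\in S_A\cap C^\infty_{c,r}(B_R)$ I would use the one-dimensional representation $u(r)=-\int_r^{R}u'(s)\,ds$ together with a weighted Sobolev inequality of Bliss/Hardy type: writing $v(r)=|u(r)|$ and using $A(|x|)\gtrsim |x|^{a_0}$ on $B_R$, one proves
\[
\int_{B_R}|u|^{p_0}\,dx\le C\left(\int_{B_R}|x|^{a_0}|\nabla u|^{p}\,dx\right)^{p_0/p}\le C'\,||u||_A^{p_0},
\]
the first inequality being the radial weighted Sobolev inequality with weight $|x|^{a_0}$ on the gradient (valid precisely because $p-N<a_0\le p$ makes $p_0\in[p,\infty)$ well defined and the associated ODE inequality works). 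The factor $C_0=C_0(R)$ from Remark~\ref{remA} converts the $|x|^{a_0}$-weighted integral into the $A$-weighted one. For a general $u\in S_A$ one cuts off with the function $\rho$ as in the proof of Lemma~\ref{ALem} and absorbs the error term $\int_{B_{R+1}\setminus B_R}A|u|^p$ using Lemma~\ref{A1} on $B_R^c$, exactly as there; this produces the stated continuous embedding into $L^{p_0}(B_R)$ with a constant depending on $N,R,p,a_0,a_\infty$.

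For the embedding $D_A\hookrightarrow L^{p_\infty}(B_R^c)$ the argument is the symmetric one at infinity: for $u\in S_A$ use $u(r)=-\int_r^\infty u'(s)\,ds$, the bound $A(|x|)\gtrsim|x|^{a_\infty}$ on $B_R^c$ from \eqref{remA.2}, and the corresponding weighted Sobolev inequality on the exterior domain to get $\int_{B_R^c}|u|^{p_\infty}\,dx\le C\big(\int_{B_R^c}|x|^{a_\infty}|\nabla u|^p\,dx\big)^{p_\infty/p}\le C'||u||_A^{p_\infty}$; here no cutoff is needed since functions in $S_A$ already have compact support, and density extends the inequality to all of $D_A$. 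Thus the hard part is establishing the two weighted radial Sobolev inequalities (equivalently, the sharp Hardy-type ODE estimates) — the pointwise lemmas \ref{A1}--\ref{ALem} are \emph{not} by themselves enough to reach the critical exponents $p_0,p_\infty$, and one must instead integrate the derivative estimate in the $L^p$ sense before applying H\"older.
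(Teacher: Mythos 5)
You correctly diagnose the main obstruction: the pointwise bound of Lemma \ref{lemdens2}$(iii)$ alone yields the exponent $-N$ and is therefore just barely insufficient at the critical exponents $p_0,p_\infty$. Your surrounding architecture also matches the paper's: prove the estimate first for compactly supported radial functions, handle general $u\in S_A$ on $B_R$ by the cutoff $\rho$ of Lemma \ref{ALem} and absorb the annulus term via the exterior decay of Lemma \ref{A1}, and conclude by density. The gap is that the entire content of the lemma --- the critical weighted inequality $\int_{B_R^c}|u|^{p_\infty}dx\le C\bigl(\int_{B_R^c}|x|^{a_\infty}|\nabla u|^p dx\bigr)^{p_\infty/p}$ and its interior analogue --- is asserted as a known ``Bliss/Hardy-type'' inequality rather than proved; the parenthetical justification (``$p_0\in[p,\infty)$ is well defined and the associated ODE inequality works'') is not an argument. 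Nor can the inequality simply be quoted from the standard Caffarelli--Kohn--Nirenberg literature: hypothesis $(\mathbf{A})$ allows $a_0,a_\infty$ anywhere in $(p-N,p]$, in particular negative, and in that range the estimate holds only for radial functions; establishing it there is exactly what the paper's proof does.

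The paper's derivation is short but specific, and it \emph{does} use the pointwise estimates, contrary to your closing claim that they play no role once one ``integrates the derivative estimate in the $L^p$ sense.'' One integrates by parts, $\int_R^\infty r^{N-1}|u|^{p_\infty}dr\le\tfrac{p_\infty}{N}\int_R^\infty r^{N}|u|^{p_\infty-1}|u'|\,dr$, applies H\"older to peel off $\bigl(\int_R^\infty r^{N-1+a_\infty}|u'|^p dr\bigr)^{1/p}\lesssim\|u\|_A$, and then applies the pointwise decay $|u(r)|\le C r^{-(N+a_\infty-p)/p}\|u\|_A$ only to the supercritical excess $|u|^{(p_\infty-p)/(p-1)}$; the definition of $p_\infty$ makes the resulting powers of $r$ cancel exactly, leaving $\bigl(\int_{B_R^c}|u|^{p_\infty}dx\bigr)^{(p-1)/p}$ on the right, which is absorbed into the left-hand side. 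Supplying this chain (and its analogue with $a_0$, $p_0$ applied to $v=\rho u$ on $B_{R+1}$) is what turns your outline into a proof.
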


\begin{proof}
Let $u\in    S_A$ (the general case follows by density). Fix $R>0$ and denote by $C$ any positive constant only dependent on $N$, $p$, $a_0$, $a_\infty$ and $R$. We first prove the embedding in $L^{p_{\infty}}(B^{c}_{R})$, so we estimate the norm of $u$ in such a space. With an integration by parts, we get

$$\int_{B_{R}^c} |u(x)|^{p_{\infty}} dx = \omega_N \int_{R}^{+\infty} r^{N-1} |u (r)|^{p_{\infty}}dr \leq \frac{\omega_N p_{\infty} }{N}
\int_{R}^{+\infty} r^N |u (r)|^{p_{\infty}-1}\, |u' (r) | dr.
$$

\noindent Then, by several applications of H\"older inequality and using the pointwise estimates of Lemma \ref{lemdens2}, we get 

$$\int_{R}^{+\infty} r^N |u (r)|^{p_{\infty}-1} \, |u' (r) | dr = \int_{R}^{+\infty} r^{\frac{N-1}{p}} |u' (r)| r^{\frac{a_{\infty}}{p}} \,
r^{-\frac{a_{\infty}}{p}} r^{\frac{Np-N+1}{p}} |u (r)|^{p_{\infty}-1} dr $$
$$
\leq \left( \int_{R}^{+\infty} r^{N-1}  |u' (r)|^p r^{a_{\infty}} dr \right)^{1/p} \, \left( \int_{R}^{+\infty} r^{\frac{Np-N+1-a_{\infty}}{p-1}}
|u (r)|^{(p_{\infty}-1)\frac{p}{p-1} }dr \right)^{\frac{p-1}{p}} 
$$

$$
\leq C\left(\int_{R}^{+\infty}  r^{N-1} A(r) |u' (r)|^p   dr  \right)^{1/p} \, \left( \int_{R}^{+\infty} r^{N-1} |u(r)|^{p_{\infty}} \,
r^{\frac{p-a_{\infty}}{p-1}} |u(r)|^{\frac{p_{\infty} -p}{p-1}} dr   \right)^{\frac{p-1}{p}} 
$$

$$
\leq C \left( \int_{B_{R}^c} A(|x|) \, |\nabla u (x)|^p dx  \right)^{1/p} \times$$
$$\times \left(\int_{R}^{+\infty} r^{N-1} |u(r)|^{p_{\infty}} 
r^{\frac{p-a_{\infty}}{p-1} } \, r^{-\frac{N+a_{\infty}-p}{p} \frac{p_{\infty}-p}{p-1} } \, \left(  \int_{B_{R}^c} A(|x|) \, |\nabla u (x)|^p dx \right)^{\frac{p_{\infty}-p}{p(p-1)}}  \right)^{\frac{p-1}{p}}  
$$

$$
\leq C \left( \int_{B_{R}^c} A(|x|) \, |\nabla u (x)|^p dx  \right)^{\frac{p_{\infty}}{p^2}} \, \left( \int_{B_{R}^c} |u(x)|^{p_{\infty}} dx\right)^{\frac{p-1}{p}}
$$

\noindent 
Notice that we have $\frac{p-a_{\infty}}{p-1} - \frac{N+a_{\infty}-p}{p} \frac{p_{\infty}-p}{p-1} =0  $, from the definition of $p_{\infty}$. 

\par\noindent From the previous computations, we get

$$
\int_{B_{R}^c} |u(x)|^{p_{\infty}} dx \leq C \left( \int_{B_{R}^c} A(|x|) \, |\nabla u (x)|^p dx  \right)^{\frac{p_{\infty}}{p^2}} \, \left( \int_{B_{R}^c} |u(x)|^{p_{\infty}} dx\right)^{\frac{p-1}{p}}
$$ 

\noindent and hence

$$
\left( \int_{B_{R}^c} |u(x)|^{p_{\infty}} dx  \right)^{1/p} \leq C \left( \int_{B_{R}^c} A(|x|) \, |\nabla u (x)|^p dx  \right)^{\frac{p_{\infty}}{p^2}} ,
$$

\noindent that is

$$
\left( \int_{B_{R}^c} |u(x)|^{p_{\infty}} dx  \right)^{\frac{1}{p_{\infty}}} \leq C \left( \int_{B_{R}^c} A(|x|) \, |\nabla u (x)|^p dx  \right)^{\frac{1}{p}} \leq
C ||u||_A , 
$$

\noindent which is the thesis. In order to prove the embedding in $L^{p_{0}}(B_{R})$, we use an argument similar to the one of Lemma \ref{ALem}. So we fix a cut-off function $\rho$ as we did there (with $R$ instead of $R_0$). For $u \in S_A$ we set $v= \rho u \in C_{c,r}^{\infty}(B_{R+1})$. Arguing as for the previous case, we get

$$
\int_{B_{R+1}} |v(x)|^{p_0} dx = \omega_N \int_{0}^{R+1} r^{N-1} |v(r)|^{p_0}  dr \leq \frac{p_0 \omega_N}{N} \int_{0}^{R+1} r^{N} |v(r)|^{p_0 -1} |v' (r)| dr=$$

$$= \frac{p_0 \omega_N}{N} \int_{0}^{R+1}  r^{\frac{N-1}{p}} |v'(r)| r^{\frac{a_0}{p}}r^{-\frac{a_0}{p}} r^{\frac{Np-N+1}{p}} |v(r)|^{p_0 -1} dr 
$$

$$
\leq \frac{p_0 \omega_N}{N} \left( \int_{0}^{R+1} r^{N-1} |v' (r)|^p r^{a_0} dr \right)^{1/p} \left( \int_{0}^{R+1} r^{\frac{Np-N+1-a_0}{p-1}} 
|v(r)|^{(p_0 -1)\frac{p}{p-1}} dr \right)^{\frac{p-1}{p}}
$$

$$
\leq  \frac{p_0 \omega_N}{N} \left( \int_{0}^{R+1} r^{N-1} A(r) |v' (r)|^p dr \right)^{1/p} \left( \int_{0}^{R+1} r^{N-1} |v(r)|^{p_0}
r^{\frac{p-a_0}{p-1}} |v(r)|^{\frac{p_0 -p}{p-1}} dr \right)^{\frac{p-1}{p}}
$$

$$
\leq C \left( \int_{B_{R+1}} A(|x|) |\nabla v (x)|^p dx \right)^{1/p} \times
$$
$$\times \left( \int_{0}^{R+1} r^{N-1} |v(r)|^{p_0} r^{\frac{p-a_0}{p-1}}  r^{-\frac{N+a_{0}-p}{p} \frac{p_{0}-p}{p-1} } \, \left(  \int_{B_{R+1}} A(|x|) \, |\nabla v (x)|^p dx \right)^{\frac{p_{0}-p}{p(p-1)}} \right)^{\frac{p-1}{p}} 
$$

$$
=  C \left( \int_{B_{R+1}} A(|x|) |\nabla v (x)|^p dx \right)^{\frac{p_0}{p^2}}  \left( \int_{B_{R+1}} |v(x)|^{p_0} dx \right)^{\frac{p-1}{p}} ,
$$

\noindent 
Notice that $\frac{p-a_0}{p-1} -\frac{N+a_{0}-p}{p} \frac{p_{0}-p}{p-1} =0$, by the definition of $p_0$. From these computations we easily deduce, as before, that

$$
\left( \int_{B_{R+1}}  |v (x)|^{p_0} dx \right)^{\frac{1}{p_0}}  \leq C \left( \int_{B_{R+1}} A(|x|) |\nabla v (x)|^p dx \right)^{\frac{1}{p}}. 
$$

\noindent Now, as in Lemma \ref{ALem}, we use the fact that $A$ is continuous and strictly positive on the compact set ${\overline {B_{R+1} \backslash B_{R} }}$, and we get

$$
\int_{B_{R+1}} A(|x|) |\nabla  \rho u|^p dx \leq C \int_{B_{R+1}} A(|x|) \rho^p |\nabla   u|^p dx + C \int_{B_{R+1} \backslash B_{R}} A(|x|) |u|^p |\nabla \rho |^p dx
\leq
$$

$$
\leq C ||u||^{p}_A + C \int_{B_{R}^c} A(|x|) | \nabla u|^p dx \left( \int_{B_{R+1} \backslash B_{R}} x^{-N-a_{\infty}+p}dx \right)^{1/p} \leq C ||u||^{p}_A .
$$

\noindent Hence we get

$$
\left( \int_{B_{R+1}} |\rho u|^{p_0} dx \right)^{1/p_0} \leq C ||u||_A 
$$

\noindent and therefore

$$
\left( \int_{B_{R}} | u|^{p_0} dx \right)^{1/p_0} = \left( \int_{B_{R}} |\rho u|^{p_0} dx \right)^{1/p_0} \leq \left( \int_{B_{R+1}} |\rho u|^{p_0} dx 
\right)^{1/p_0} \leq C ||u||_A ,
$$

\noindent which is the thesis.
\end{proof}

The following lemma gives another embedding result that we will use.

\begin{lem}
\label{A6}

Assume the hypothesis $\left( \mathbf{A}\right) $ and fix $0<r<R$. 
Then the embedding
$$
D_{A} \hookrightarrow L^{p}(B_R \setminus {\overline B_r })
$$
is continuous and compact.
\end{lem}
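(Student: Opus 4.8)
The plan is to prove the embedding $D_A\hookrightarrow L^p(B_R\setminus\overline{B_r})$ in two stages: first establish continuity, then upgrade to compactness via a classical Rellich--Kondrachov argument on the fixed annulus $\Omega:=B_R\setminus\overline{B_r}$. For continuity, I would invoke Lemma \ref{lemdens2}: part $(i)$ guarantees that every $u\in D_A$ has weak derivatives $D_iu\in L^p_{\mathrm{loc}}(\mathbb{R}^N\setminus\{0\})$, and on the compact set $\overline{\Omega}$ the continuous positive function $A(|x|)$ is bounded below by a constant $c=c(r,R)>0$. Hence $\int_\Omega|\nabla u|^p\,dx\le c^{-1}\int_\Omega A(|x|)|\nabla u|^p\,dx\le c^{-1}\|u\|_A^p$, so $u\in W^{1,p}(\Omega)$ with $\|\nabla u\|_{L^p(\Omega)}\le c^{-1/p}\|u\|_A$. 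For the full $W^{1,p}(\Omega)$ norm we also need control of $\|u\|_{L^p(\Omega)}$: this comes from the pointwise bound in Lemma \ref{lemdens2}$(iii)$. Choosing $R_0$ between $r$ and $R$ (or just using both estimates), on $\overline{\Omega}$ we have $|x|$ bounded above and below, so $|u(x)|\le M'\|u\|_A$ for a.e.\ $x\in\Omega$ with $M'=M'(N,r,R,a_0,a_\infty)$; integrating gives $\|u\|_{L^p(\Omega)}\le M'|\Omega|^{1/p}\|u\|_A$. Together these show $D_A\hookrightarrow W^{1,p}(\Omega)$ continuously, and in particular $D_A\hookrightarrow L^p(\Omega)$ continuously.

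For compactness, I would take a bounded sequence $(u_n)$ in $D_A$, say $\|u_n\|_A\le 1$. By the continuous embedding just established, $(u_n)$ is bounded in $W^{1,p}(\Omega)$ where $\Omega$ is a bounded open set. Since $\Omega=B_R\setminus\overline{B_r}$ is a bounded domain with Lipschitz (indeed smooth) boundary, the classical Rellich--Kondrachov theorem gives that the embedding $W^{1,p}(\Omega)\hookrightarrow L^p(\Omega)$ is compact; hence $(u_n)$ has a subsequence converging strongly in $L^p(\Omega)=L^p(B_R\setminus\overline{B_r})$. This yields compactness of $D_A\hookrightarrow L^p(B_R\setminus\overline{B_r})$.

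The only mildly delicate points, and where I would be careful, are: $(a)$ confirming that elements of $D_A$—defined abstractly as a completion of $S_A$—genuinely restrict to $W^{1,p}(\Omega)$ functions with the expected norm control, which is exactly what Lemma \ref{lemdens2} parts $(i)$ and $(iii)$ are designed to supply (one argues first for $u\in S_A$ by the lower bound on $A$ over $\overline\Omega$, then passes to the completion using that $\|\cdot\|_A$-Cauchy sequences are $W^{1,p}(\Omega)$-Cauchy); and $(b)$ making sure the annulus stays away from the origin, which is guaranteed since $r>0$, so the singularity of $A$ at $0$ never enters. No genuinely hard step arises: the proof is essentially ``$A$ is comparable to a constant on a compact annulus away from the origin, so restrict and apply Rellich--Kondrachov.''
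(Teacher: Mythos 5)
Your proposal is correct and follows essentially the same route as the paper: continuity of the embedding into $L^{p}(B_R\setminus\overline{B_r})$ from the pointwise estimates of Lemma \ref{lemdens2} (the paper integrates \eqref{est1} over the annulus), and compactness by noting that $A$ is bounded below on the compact annulus, so a bounded sequence in $D_A$ is bounded in $W^{1,p}$ of the annulus and Rellich's theorem applies. Your extra care about passing from $S_A$ to the completion is sound but does not change the argument.
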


\begin{proof}
Set $E:=B_R \setminus {\overline B_r }$ for brevity.
The continuity of the embedding easily derives from (\ref{est1}), by integrating over the set $E$. 
As to compactness, let $\{u_{n}\}_n$ be a bounded sequence in $D_{A} $. By continuity of the embedding we obtain that also $\{|| u_n  ||_{L^{p}(E)} \}_n$ is bounded.
Moreover, as $A$ is continuous and strictly positive on the compact set $\overline {E }$, we have

$$
\int_{E} |\nabla u_n |^p dx \leq C \int_{ E } A(|x|) |\nabla u_n |^p dx
 \leq C_1 .
$$

\noindent Thus $\{u_{n}\}_n$ is bounded also in the space $W^{1,p}(E )$. Thanks to Rellich's Theorem, $\{u_{n}\}_n$ has a convergent subsequence in $L^{p}(E )$, and this gives the thesis.
\end{proof}


\section{Compactness results for the space $X$}\label{COMP}

In this section we state the main compactness results of this paper, concerning the space $X$. 
Recall that we define such a space as
$$X := D_A \cap  L^{p}(\mathbb{R}^{N}, V (|x|) dx ) $$
endowed with the norm $||u|| := \left( ||u||_{A}^{p} + ||u||_{L^{p}(\mathbb{R}^{N}, V (|x|) dx )}^{p} \right)^{1/p}$, with respect to which $X$ is a Banach space. 
The compactness results that we state here will be proved in sections \ref{SEC:1} and \ref{SEC:2}.

Given $A$, $V$ and $K$ as in $\left( \mathbf{A}\right) $, $\left( \mathbf{V}\right) $ and $\left( \mathbf{K}\right) $, we define the following functions of $R>0$ and $q>1$: 
\begin{eqnarray}
\mathcal{S}_{0}\left( q,R\right)&:=&
\sup_
{u\in X,\,
\left\| u\right\| =1  }
\int_{B_{R}}K\left( \left| x\right| \right)
\left| u\right| ^{q}dx,  \label{S_o :=}
\\
\mathcal{S}_{\infty }\left( q,R\right)&:=&
\sup_
{u\in X,\,
\left\| u\right\| =1  } 
\int_{\mathbb{R}%
^{N}\setminus B_{R}}K\left( \left| x\right| \right) \left| u\right| ^{q}dx.
\label{S_i :=}
\end{eqnarray}
Clearly $\mathcal{S}_{0}\left( q,\cdot \right) $ is nondecreasing, $\mathcal{
S}_{\infty }\left( q,\cdot \right) $ is nonincreasing and both of them can
be infinite at some $R$.

Our first result concerns the embedding properties of $X$ into the sum
space 
\[
L_{K}^{q_{1}}+L_{K}^{q_{2}}:=\left\{ u_{1}+u_{2}:u_{1}\in
L_{K}^{q_{1}}\left( \mathbb{R}^{N}\right) ,\,u_{2}\in L_{K}^{q_{2}}\left( \mathbb{R%
}^{N}\right) \right\} ,\quad 1<q_{i}<\infty . 
\]
We recall from \cite{BPR} that such a space can be characterized as the set
of measurable mappings $u:\mathbb{R}^{N}\rightarrow \mathbb{R}$ for which there
exists a measurable set $E\subseteq \mathbb{R}^{N}$ such that $u\in
L_{K}^{q_{1}}\left( E\right) \cap L_{K}^{q_{2}}\left( E^{c}\right) $. It is
a Banach space with respect to the norm 
\[
\left\| u\right\| _{L_{K}^{q_{1}}+L_{K}^{q_{2}}}:=\inf_{u_{1}+u_{2}=u}\max
\left\{ \left\| u_{1}\right\| _{L_{K}^{q_{1}}(\mathbb{R}^{N})},\left\|
u_{2}\right\| _{L_{K}^{q_{2}}(\mathbb{R}^{N})}\right\} 
\]
and the continuous embedding $L_{K}^{q}\hookrightarrow
L_{K}^{q_{1}}+L_{K}^{q_{2}}$ holds for all $q\in \left[ \min \left\{
q_{1},q_{2}\right\} ,\max \left\{ q_{1},q_{2}\right\} \right] $. The
assumptions of our result are quite general but not so easy to check, so more handy conditions ensuring these general assumptions
will be provided by the next results.


\begin{thm}
\label{THM(cpt)} Let $1<p<N$, let $A$, $V$ and $K$ be as in 
$\left( \mathbf{A}\right) $, $\left( \mathbf{V}\right) $ and $\left( \mathbf{K}\right) $, and let $q_{1},q_{2}>1$.

\begin{itemize}
\item[(i)]  If 
\begin{equation}
\mathcal{S}_{0}\left( q_{1},R_{1}\right) <\infty \quad \text{and}\quad 
\mathcal{S}_{\infty }\left( q_{2},R_{2}\right) <\infty \quad \text{for some }%
R_{1},R_{2}>0,  
\tag*{$\left( {\cal S}_{q_{1},q_{2}}^{\prime }\right) $}
\end{equation}
then $X$ is continuously embedded into $L_{K}^{q_{1}}(\mathbb{R}^{N})+L_{K}^{q_{2}}(\mathbb{R}^{N})$.

\item[(ii)]  If 
\begin{equation}
\lim_{R\rightarrow 0^{+}}\mathcal{S}_{0}\left( q_{1},R\right)
=\lim_{R\rightarrow +\infty }\mathcal{S}_{\infty }\left( q_{2},R\right) =0, 
\tag*{$\left({\cal S}_{q_{1},q_{2}}^{\prime \prime }\right) $}
\end{equation}
then $X$ is compactly embedded into $L_{K}^{q_{1}}(\mathbb{R}^{N})+L_{K}^{q_{2}}(\mathbb{R}^{N})$.

\end{itemize}
\end{thm}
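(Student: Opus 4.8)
The plan is to treat both parts by splitting $\mathbb{R}^{N}$ into a small ball $B_{r}$, a large complement $B_{R}^{c}$, and the intermediate annulus $B_{R}\setminus\overline{B_{r}}$: on the first two regions the integrals of $K|u|^{q_{i}}$ are controlled directly by $\mathcal{S}_{0}$ and $\mathcal{S}_{\infty}$, while on the annulus one uses the compact embedding of Lemma~\ref{A6} together with the pointwise bounds of Lemma~\ref{lemdens2}(iii) and the local integrability of $K$.

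For part (i), put $\tilde{R}:=\max\{R_{1},R_{2}\}$ and take $E:=B_{\tilde{R}}$. For $u\in X$ I would write $\int_{B_{\tilde{R}}}K|u|^{q_{1}}dx=\int_{B_{R_{1}}}K|u|^{q_{1}}dx+\int_{B_{\tilde{R}}\setminus\overline{B_{R_{1}}}}K|u|^{q_{1}}dx$; the first term is $\le\mathcal{S}_{0}(q_{1},R_{1})\|u\|^{q_{1}}$ by definition, and on the (possibly empty) annulus $B_{\tilde{R}}\setminus\overline{B_{R_{1}}}$, which is compact and bounded away from the origin, Lemma~\ref{lemdens2}(iii) gives $|u(x)|\le C\|u\|$ a.e., while $K\in L^{s}_{\mathrm{loc}}(\mathbb{R}_{+})$ with $s>1$ forces $\int_{B_{\tilde{R}}\setminus\overline{B_{R_{1}}}}K(|x|)\,dx<\infty$, so this term too is $\le C\|u\|^{q_{1}}$. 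Since $\tilde{R}\ge R_{2}$ and $\mathcal{S}_{\infty}(q_{2},\cdot)$ is nonincreasing, $\int_{B_{\tilde{R}}^{c}}K|u|^{q_{2}}dx\le\mathcal{S}_{\infty}(q_{2},R_{2})\|u\|^{q_{2}}$. Hence $u\in L_{K}^{q_{1}}(E)\cap L_{K}^{q_{2}}(E^{c})$ with both norms bounded by a multiple of $\|u\|$, which by the characterization of $L_{K}^{q_{1}}+L_{K}^{q_{2}}$ recalled above yields the continuous embedding.

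For part (ii), I first note that $(\mathcal{S}_{q_{1},q_{2}}^{\prime\prime})$ implies $(\mathcal{S}_{q_{1},q_{2}}^{\prime})$, so the embedding is continuous by part (i); it remains to show it is compact. Let $\{u_{n}\}$ be bounded in $X$, say $\|u_{n}\|\le M$. By Lemma~\ref{A6} the embedding $D_{A}\hookrightarrow L^{p}(B_{m}\setminus\overline{B_{1/m}})$ is compact for every $m$, so a diagonal argument gives a subsequence (not relabelled) that is Cauchy in $L^{p}(B_{m}\setminus\overline{B_{1/m}})$ for all $m$. I claim this subsequence is Cauchy in $L_{K}^{q_{1}}+L_{K}^{q_{2}}$. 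Given $\varepsilon>0$, use $(\mathcal{S}_{q_{1},q_{2}}^{\prime\prime})$ to pick $r>0$ small and $R>0$ large with $\mathcal{S}_{0}(q_{1},r)<\varepsilon$ and $\mathcal{S}_{\infty}(q_{2},R)<\varepsilon$, and fix $m$ with $1/m<r<R<m$; taking $E:=B_{R}$ in the definition of the sum-norm,
\[
\|u_{n}-u_{k}\|_{L_{K}^{q_{1}}+L_{K}^{q_{2}}}\le\max\left\{\left(\int_{B_{R}}K|u_{n}-u_{k}|^{q_{1}}dx\right)^{1/q_{1}},\left(\int_{B_{R}^{c}}K|u_{n}-u_{k}|^{q_{2}}dx\right)^{1/q_{2}}\right\}.
\]
Splitting $\int_{B_{R}}=\int_{B_{r}}+\int_{\Omega}$ with $\Omega:=B_{R}\setminus\overline{B_{r}}$, the $B_{r}$- and $B_{R}^{c}$-integrals are $\le\varepsilon(2M)^{q_{1}}$ and $\le\varepsilon(2M)^{q_{2}}$ by definition of $\mathcal{S}_{0}$ and $\mathcal{S}_{\infty}$, whereas on $\Omega\subseteq B_{m}\setminus\overline{B_{1/m}}$ Hölder's inequality with exponents $s,s^{\prime}$ gives $\int_{\Omega}K|u_{n}-u_{k}|^{q_{1}}dx\le\|K\|_{L^{s}(\Omega)}\|u_{n}-u_{k}\|_{L^{q_{1}s^{\prime}}(\Omega)}^{q_{1}}$, and since the $u_{n}$ are uniformly bounded on the compact set $\overline{\Omega}$ by Lemma~\ref{lemdens2}(iii), interpolating the $L^{q_{1}s^{\prime}}(\Omega)$-norm between $L^{p}(\Omega)$ and $L^{\infty}(\Omega)$ (or merely using $|\Omega|<\infty$ when $q_{1}s^{\prime}<p$) shows this term tends to $0$ as $n,k\to\infty$. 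Hence $\limsup_{n,k\to\infty}\|u_{n}-u_{k}\|_{L_{K}^{q_{1}}+L_{K}^{q_{2}}}\le 2M\max\{\varepsilon^{1/q_{1}},\varepsilon^{1/q_{2}}\}$; letting $\varepsilon\to 0$ shows the subsequence is Cauchy, hence convergent, in the Banach space $L_{K}^{q_{1}}+L_{K}^{q_{2}}$.

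The step I expect to require most care is the intermediate annulus: there $\mathcal{S}_{0}$ and $\mathcal{S}_{\infty}$ carry no information, so one must genuinely combine the Rellich-type compactness of Lemma~\ref{A6}, the uniform pointwise bounds of Lemma~\ref{lemdens2}(iii), and the hypothesis $K\in L^{s}_{\mathrm{loc}}(\mathbb{R}_{+})$ with $s>1$ in order to upgrade $L^{p}$-convergence on the annulus to $L_{K}^{q_{1}}$-convergence; the diagonal extraction over the exhausting annuli $B_{m}\setminus\overline{B_{1/m}}$ is needed precisely because the cut-off radii $r,R$ depend on $\varepsilon$.
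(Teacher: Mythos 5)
Your proposal is correct, and its skeleton coincides with the paper's: split $\mathbb{R}^N$ into an inner ball controlled by $\mathcal{S}_0$, an outer region controlled by $\mathcal{S}_\infty$, and an intermediate annulus handled by combining H\"older's inequality with $K\in L^s_{\mathrm{loc}}$, the pointwise bounds of Lemma \ref{lemdens2}(iii), and the compactness of Lemma \ref{A6}. The differences are in how you close the argument. For (i) you estimate the sum norm directly through the canonical splitting $u=u\chi_E+u\chi_{E^c}$, whereas the paper invokes Proposition \ref{Prop(->0)} (from \cite{BPR}) to get sequential continuity; both are fine, and yours is marginally more self-contained. For (ii) the divergence is more substantive: the paper shows that $u_n\rightharpoonup 0$ in $X$ forces $u_n\to 0$ in $L_K^{q_1}+L_K^{q_2}$, which yields compactness only after one extracts a weakly convergent subsequence from a bounded one, i.e.\ it implicitly uses reflexivity of $X$; you instead run a diagonal extraction over the exhausting annuli $B_m\setminus\overline{B_{1/m}}$ and prove the subsequence is Cauchy in the Banach space $L_K^{q_1}+L_K^{q_2}$, which avoids both reflexivity and Proposition \ref{Prop(->0)} at the cost of the diagonal argument and an explicit interpolation (your inline treatment of the annulus term is essentially a reproof of Lemma \ref{Lem(corone)}, with the exponent $s$ in place of the paper's $t<s$, repaired by interpolating between $L^p$ and $L^\infty$). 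Both routes are sound; yours is more elementary in its functional-analytic prerequisites, the paper's is shorter given the tools it has already set up.
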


\noindent Observe that, of course, $(\mathcal{S}_{q_{1},q_{2}}^{\prime \prime })$
implies $(\mathcal{S}_{q_{1},q_{2}}^{\prime })$. Moreover, these assumptions
can hold with $q_{1}=q_{2}=q$ and therefore Theorem \ref{THM(cpt)} also
concerns the embedding properties of $X$ into $L_{K}^{q}$, $1<q<\infty $.
\smallskip

We now look for explicit conditions on $V$ and $K$ implying $(\mathcal{S}%
_{q_{1},q_{2}}^{\prime \prime })$ for some $q_{1}$ and $q_{2}$. More
precisely, in Theorem \ref{THM0} we will find a range of exponents $%
q_{1} $ such that $\lim_{R\rightarrow 0^{+}}\mathcal{S}_{0}\left(q_{1},R\right)$ $=0$, while 
in Theorem \ref{THM1} we will do the same for exponents $q_{2}$ such that
$\lim_{R\rightarrow +\infty}\mathcal{S}_{\infty }\left( q_{2},R\right) =0$.


For $\alpha \in \mathbb{R}$, $\beta \in \left[ 0,1\right] $, $a>p-N$, we define two
functions $\alpha ^{*}\left(a, \beta \right) $ and $q^{*}\left(a, \alpha ,\beta
\right) $ by setting 
\[
\alpha ^{*}\left(a, \beta \right) :=\max \left\{ p\beta -1-\frac{p-1}{p}N -a \beta + \frac{a}{p},-\left(
1-\beta \right) N\right\} =\]
\[
\left\{ 
\begin{array}{ll}
p\beta -1-\frac{p-1}{p}N  -a\beta +\frac{a}{p}\quad \smallskip & \text{if }0\leq \beta \leq \frac{1}{p%
} \\ 
-\left( 1-\beta \right) N & \text{if }\frac{1}{p}\leq \beta \leq 1
\end{array}
\right. 
\]
and 
\[
q^{*}\left(a, \alpha ,\beta \right) :=p\frac{\alpha -p\beta +N+ a \beta}{N-p+a}. 
\]

\begin{thm}
\label{THM0}
Let $A$, $V$, $K$ be as in $\left( \mathbf{A}\right) $, $\left( \mathbf{V}\right) $, $\left( \mathbf{K}\right) $.
Assume that there exists $R_{1}>0$ such that $V\left( r\right) <+\infty $ almost everywhere in $(0,R_1)$ and
\begin{equation}
\esssup_{r\in \left( 0,R_{1}\right) }\frac{K\left( r\right) }{%
r^{\alpha _{0}}V\left( r\right) ^{\beta _{0}}}<+\infty \quad \text{for some }%
0\leq \beta _{0}\leq 1\text{~and }\alpha _{0}>\alpha ^{*}\left(a_{0}, \beta
_{0}\right) .  \label{esssup in 0}
\end{equation}
Then $\displaystyle \lim_{R\rightarrow 0^{+}}\mathcal{S}_{0}\left(
q_{1},R\right) =0$ for every $q_{1}\in \mathbb{R}$ such that 
\begin{equation}
\max \left\{ 1,p\beta _{0}\right\} <q_{1}<q^{*}\left(a_{0}, \alpha _{0},\beta
_{0}\right) .  \label{th1}
\end{equation}
\end{thm}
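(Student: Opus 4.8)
The plan is to estimate $\int_{B_R} K(|x|)|u|^{q_1}dx$ for $\|u\|=1$ and show it tends to $0$ as $R\to 0^+$, by splitting the integrand using the hypothesis \eqref{esssup in 0}. First I would write, for a.e.\ $x\in B_{R_1}$,
\[
K(|x|)|u|^{q_1}=\frac{K(|x|)}{|x|^{\alpha_0}V(|x|)^{\beta_0}}\,|x|^{\alpha_0}V(|x|)^{\beta_0}|u|^{q_1}\le C\,|x|^{\alpha_0}V(|x|)^{\beta_0}|u|^{q_1},
\]
where $C$ is the finite essential supremum in \eqref{esssup in 0}. (When $\beta_0=0$ the factor $V^{\beta_0}$ is simply $1$, and the argument is a special case.) Now I would factor $|u|^{q_1}=|u|^{p\beta_0}\cdot|u|^{q_1-p\beta_0}$, matching $V(|x|)^{\beta_0}|u|^{p\beta_0}$ with the potential term in the norm, and apply H\"older's inequality on $B_R$ with exponents $1/\beta_0$ and $1/(1-\beta_0)$:
\[
\int_{B_R}K(|x|)|u|^{q_1}dx\le C\Big(\int_{B_R}V(|x|)|u|^p\,dx\Big)^{\beta_0}\Big(\int_{B_R}|x|^{\frac{\alpha_0}{1-\beta_0}}|u|^{\frac{q_1-p\beta_0}{1-\beta_0}}dx\Big)^{1-\beta_0}.
\]
The first factor is bounded by $\|u\|^{p\beta_0}\le 1$. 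For the second factor, the condition $q_1>\max\{1,p\beta_0\}$ ensures the exponent $\frac{q_1-p\beta_0}{1-\beta_0}$ is positive, and the condition $q_1<q^*(a_0,\alpha_0,\beta_0)$ is exactly what is needed to make this exponent strictly less than $p_0=\frac{pN}{N+a_0-p}$, so that the pointwise bound $|u(x)|\le M|x|^{-(N+a_0-p)/p}\|u\|_A$ from Lemma~\ref{lemdens2}(iii) can absorb the excess power of $|u|$ while the remaining power of $|u|$, together with the weight $|x|^{\alpha_0/(1-\beta_0)}$, gives a locally integrable function near the origin. Concretely I would split $|u|^{\frac{q_1-p\beta_0}{1-\beta_0}}=|u|^{p_0}\cdot|u|^{\frac{q_1-p\beta_0}{1-\beta_0}-p_0}$, bound the second factor pointwise by a constant times $|x|^{-\frac{N+a_0-p}{p}(\frac{q_1-p\beta_0}{1-\beta_0}-p_0)}$, and check—using the definition of $\alpha^*$ and $q^*$—that the resulting power of $|x|$ combines with $|x|^{\alpha_0/(1-\beta_0)}$ to something of the form $|x|^{\gamma}$ with $\gamma>-N$, hence integrable on $B_R$ with integral $\to 0$ as $R\to 0$; the leftover $|u|^{p_0}$ is controlled by $\|u\|_{L^{p_0}(B_{R_1})}\le C\|u\|_A\le C$ via Lemma~\ref{A4}.

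The two exponent bounds in \eqref{th1} correspond precisely to the two cases in the definition of $\alpha^*(a_0,\beta_0)$: the branch $-(1-\beta_0)N$ (active for $\beta_0\ge 1/p$) is what guarantees $\gamma>-N$ when one simply uses $q_1>p\beta_0$ without invoking the pointwise decay, while the branch $p\beta_0-1-\frac{p-1}{p}N-a_0\beta_0+\frac{a_0}{p}$ (active for $\beta_0\le 1/p$) is what one gets after using the full strength of the Lemma~\ref{lemdens2}(iii) decay estimate; taking the max of the two is what makes the statement uniform in $\beta_0\in[0,1]$. I would organize the argument so that a single H\"older splitting with a free parameter handles both regimes, or alternatively treat $\beta_0\le 1/p$ and $\beta_0\ge 1/p$ separately if that is cleaner. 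The edge cases $\beta_0=0$ (no $V$), $\beta_0=1$ (the $|x|$-weight factor disappears and one uses only $\int V|u|^p$), and $q_1$ near the endpoints of the interval require a brief separate check, but are routine.

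The main obstacle is bookkeeping: verifying that the accumulated exponent of $|x|$ after the H\"older split and the pointwise substitution is exactly $>-N$ precisely under the stated inequalities $\alpha_0>\alpha^*(a_0,\beta_0)$ and $q_1<q^*(a_0,\alpha_0,\beta_0)$, with no slack lost. This is a careful but elementary computation; the conceptual content is entirely in choosing the right way to distribute the power $|u|^{q_1}$ between the $V$-weighted $L^p$ norm and the $L^{p_0}$-type bound coming from $\|u\|_A$. A minor additional point is that one must use $V<+\infty$ a.e.\ on $(0,R_1)$ to make sense of the quotient $K/(r^{\alpha_0}V^{\beta_0})$ when $\beta_0>0$; this is exactly why that hypothesis appears in the statement.
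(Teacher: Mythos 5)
Your overall strategy --- peeling off $\left(\int_{B_R}V(|x|)|u|^p\,dx\right)^{\beta_0}$ by H\"older and controlling the remaining factor through the pointwise decay of Lemma \ref{lemdens2}(iii) and the $L^{p_0}$ embedding of Lemma \ref{A4} --- is the same as the paper's, which packages these estimates as Lemma \ref{Lem(Omega)} and then integrates the resulting power of $|x|$ over $B_R$ to get $\mathcal{S}_0(q_1,R)\le CR^{\delta}$ with $\delta>0$. But the concrete step you propose for the second factor fails. You split $|u|^{\frac{q_1-p\beta_0}{1-\beta_0}}=|u|^{p_0}\cdot|u|^{t}$ with $t=\frac{q_1-p\beta_0}{1-\beta_0}-p_0<0$ and then ``bound the second factor pointwise by a constant times $|x|^{-\nu t}$'' (with $\nu=\frac{N+a_0-p}{p}$). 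The decay estimate $|u(x)|\le M|x|^{-\nu}\|u\|$ is an \emph{upper} bound on $|u|$, so for a \emph{negative} exponent $t$ it gives $|u|^{t}\ge\left(M|x|^{-\nu}\|u\|\right)^{t}$ --- the reverse inequality; where $u$ is small or vanishes, $|u|^{t}$ is large or infinite and admits no bound of the form $C|x|^{-\nu t}$. Relatedly, your claim that $q_1<q^{*}(a_0,\alpha_0,\beta_0)$ is ``exactly'' the condition $\frac{q_1-p\beta_0}{1-\beta_0}<p_0$ is false: the latter reads $q_1<p\beta_0+(1-\beta_0)p_0$, which agrees with $q_1<q^{*}$ only for one special value of $\alpha_0$; the actual role of $q_1<q^{*}$ is to make the final exponent of $R$ positive. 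A second, independent problem is the last step: even granted an integrand of the form $|x|^{\gamma}|u|^{p_0}$ with $\gamma>-N$ but possibly $\gamma<0$, you cannot estimate $\int_{B_R}|x|^{\gamma}|u|^{p_0}dx$ by $\left(\int_{B_R}|x|^{\gamma}dx\right)\|u\|_{L^{p_0}}^{p_0}$; separating the factors needs another H\"older, which replaces $p_0$ by some $p_0t'>p_0$ that Lemma \ref{A4} no longer controls.

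The repair is to apply the pointwise estimate only to a \emph{positive} power of $|u|$ and to leave under the integral exactly the power $p_0$, raised to a fractional outer exponent. For $0<\beta_0<1/p$, after your first H\"older write $|u|^{\frac{q_1-p\beta_0}{1-\beta_0}}=|u|^{\frac{q_1-1}{1-\beta_0}}\cdot|u|^{\frac{1-p\beta_0}{1-\beta_0}}$ and apply H\"older with exponents $\lambda=\frac{(1-\beta_0)p_0}{1-p\beta_0}$ and $\lambda'$: the factor $|u|^{\frac{q_1-1}{1-\beta_0}}$ has positive exponent (here $q_1>1$ is used) and is bounded pointwise by $C|x|^{-\nu\frac{q_1-1}{1-\beta_0}}$, while the other factor contributes $\left(\int_{B_R}|u|^{p_0}dx\right)^{1/\lambda}\le C\|u\|^{\frac{1-p\beta_0}{1-\beta_0}}$ by Lemma \ref{A4}. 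What remains is $\left(\int_{B_R}|x|^{\sigma}dx\right)^{1/\lambda'}$ for an explicit $\sigma$, and one checks that $\sigma+N$ is a positive multiple of $q^{*}(a_0,\alpha_0,\beta_0)-q_1$, whence the claimed $CR^{\delta}$ bound. The regimes $\beta_0=0$, $\beta_0\ge 1/p$ and $\beta_0=1$ are treated by the analogous splittings (without the $L^{p_0}$ embedding when $\beta_0\ge 1/p$), exactly as in the case analysis of Lemma \ref{Lem(Omega)}.
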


\begin{thm}
\label{THM1}
Let $A$, $V$, $K$ be as in $\left( \mathbf{A}\right) $, $\left( \mathbf{V}\right) $, $\left( \mathbf{K}\right) $.
Assume that there exists $R_{2}>0$ such that $V\left( r\right) <+\infty $ for almost every $r>R_2$ and
\begin{equation}
\esssup_{r>R_{2}}\frac{K\left( r\right) }{r^{\alpha _{\infty
}}V\left( r\right) ^{\beta _{\infty }}}<+\infty \quad \text{for some }0\leq
\beta _{\infty }\leq 1\text{~and }\alpha _{\infty }\in \mathbb{R}.
\label{esssup all'inf}
\end{equation}
Then $\displaystyle \lim_{R\rightarrow +\infty }\mathcal{S}_{\infty }\left(
q_{2},R\right) =0$ for every $q_{2}\in \mathbb{R}$ such that 
\begin{equation}
q_{2}>\max \left\{ 1,p\beta _{\infty },q^{*}\left(a_{\infty}, \alpha _{\infty },\beta
_{\infty }\right) \right\} .  \label{th2}
\end{equation}
\end{thm}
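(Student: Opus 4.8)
\textbf{Proof plan for Theorem \ref{THM1}.}

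The plan is to estimate, for $u\in X$ with $\|u\|=1$ and for $R>R_2$, the integral $\int_{B_R^c}K(|x|)|u|^{q_2}\,dx$ and to show it tends to $0$ as $R\to+\infty$. The strategy mirrors the proof of Theorem \ref{THM0} but at infinity, using the pointwise decay estimate from Lemma \ref{lemdens2}$(iii)$, namely $|u(x)|\le M|x|^{-(N+a_\infty-p)/p}\|u\|_A$ for a.e. $x\in B_R^c$, together with the control on $\|u\|_{L^p(\mathbb R^N,V(|x|)dx)}$ coming from $\|u\|=1$. First I would split, on $B_R^c$, the density $K$ using the hypothesis \eqref{esssup all'inf}: writing $C_K:=\esssup_{r>R_2}K(r)/(r^{\alpha_\infty}V(r)^{\beta_\infty})<+\infty$, we have $K(r)\le C_K\, r^{\alpha_\infty}V(r)^{\beta_\infty}$ for a.e. $r>R_2$, so
\begin{equation*}
\int_{B_R^c}K(|x|)|u|^{q_2}\,dx\le C_K\int_{B_R^c}|x|^{\alpha_\infty}V(|x|)^{\beta_\infty}|u|^{q_2}\,dx.
\end{equation*}

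Next I would peel off a factor $V(|x|)^{\beta_\infty}|u|^{p\beta_\infty}$ (this is where $q_2>p\beta_\infty$ is needed, so that the remaining power of $|u|$ is positive) and apply H\"older's inequality with exponents $1/\beta_\infty$ and $1/(1-\beta_\infty)$ (treating the endpoint cases $\beta_\infty=0$ and $\beta_\infty=1$ separately, the latter being immediate). This gives
\begin{equation*}
\int_{B_R^c}|x|^{\alpha_\infty}V^{\beta_\infty}|u|^{q_2}\,dx\le\left(\int_{B_R^c}V(|x|)|u|^p\,dx\right)^{\beta_\infty}\left(\int_{B_R^c}|x|^{\frac{\alpha_\infty}{1-\beta_\infty}}|u|^{\frac{q_2-p\beta_\infty}{1-\beta_\infty}}\,dx\right)^{1-\beta_\infty}.
\end{equation*}
The first factor is bounded by $\|u\|^{p\beta_\infty}=1$. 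In the second factor I would insert the pointwise bound of Lemma \ref{lemdens2}$(iii)$ to absorb \emph{part} of the power of $|u|$ — say, replace $|u|^{\gamma}$ for a suitable $\gamma\ge 0$ by $M^\gamma|x|^{-\gamma(N+a_\infty-p)/p}$ — chosen exactly so that the leftover power of $|u|$ is $p_\infty=pN/(N+a_\infty-p)$, and then use the embedding $D_A\hookrightarrow L^{p_\infty}(B_R^c)$ from Lemma \ref{A4} to bound $\int_{B_R^c}|u|^{p_\infty}\,dx\le C\|u\|_A^{p_\infty}\le C$. What remains is a pure power integral $\int_{B_R^c}|x|^{-\sigma}\,dx=\omega_N\int_R^\infty r^{N-1-\sigma}\,dr$, which is finite and tends to $0$ as $R\to+\infty$ precisely when $\sigma>N$; a direct computation of the exponent $\sigma$ in terms of $N,p,a_\infty,\alpha_\infty,\beta_\infty,q_2$ should show that $\sigma>N$ is equivalent to $q_2>q^*(a_\infty,\alpha_\infty,\beta_\infty)$, which is one of our hypotheses.

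The main obstacle, and the part requiring care, is the bookkeeping of exponents: one must verify that the split of $|u|^{q_2}$ into the piece controlled by $V$, the piece controlled pointwise by Lemma \ref{lemdens2}$(iii)$, and the piece controlled by the $L^{p_\infty}$-embedding is \emph{simultaneously} feasible — i.e. all three exponents are nonnegative — under the single constraint $q_2>\max\{1,p\beta_\infty,q^*(a_\infty,\alpha_\infty,\beta_\infty)\}$, and that the resulting power $\sigma$ satisfies $\sigma>N$ exactly in that range. The condition $q_2>1$ is what makes $K(|x|)|u|^{q_2}$ genuinely integrable near generic points and lets the H\"older bookkeeping close; the definition of $q^*$ has been rigged so that the borderline $\sigma=N$ corresponds to $q_2=q^*$. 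Since $\mathcal S_\infty(q_2,R)$ is the supremum over the unit sphere of $X$ of exactly the integral we have bounded by $C\,\omega_N\int_R^\infty r^{N-1-\sigma}dr$ with constants independent of $u$, letting $R\to+\infty$ yields $\lim_{R\to+\infty}\mathcal S_\infty(q_2,R)=0$, which is the claim. (Note that, unlike Theorem \ref{THM0}, no upper restriction on $q_2$ arises, since at infinity the power integral converges for all large $R$ once $\sigma>N$, whereas near the origin one needs $\sigma<N$ for convergence, producing the two-sided constraint there.)
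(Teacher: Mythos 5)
Your overall architecture (bound $K$ by $|x|^{\alpha_\infty}V^{\beta_\infty}$, peel off the $V|u|^p$ part by H\"older, use the pointwise decay of Lemma \ref{lemdens2}$(iii)$ and the embedding $D_A\hookrightarrow L^{p_\infty}(B_R^c)$, and reduce to a power integral whose convergence at infinity is equivalent to $q_2>q^*$) is the right one and matches the paper's Lemma \ref{Lem(Omega2)}. But the exponent bookkeeping you yourself flag as ``the part requiring care'' does not close as you describe it, and this is a genuine gap, not a formality.

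The problem is your choice to reserve the \emph{full} power $p_\infty=\frac{pN}{N+a_\infty-p}$ of $|u|$ for the $L^{p_\infty}$-embedding factor. After the first H\"older step the remaining power of $|u|$ is $\frac{q_2-p\beta_\infty}{1-\beta_\infty}$, so your split needs the pointwise-absorbed exponent $\gamma=\frac{q_2-p\beta_\infty}{1-\beta_\infty}-p_\infty$ to be nonnegative, i.e.
\[
q_2\;\geq\;p\beta_\infty+(1-\beta_\infty)\,p_\infty
\;=\;q^{*}\left(a_\infty,\alpha_\infty,\beta_\infty\right)-\frac{p\,\alpha_\infty}{N-p+a_\infty}.
\]
Hence whenever $\alpha_\infty<0$ (and $\beta_\infty<1$), the hypothesis $q_2>\max\{1,p\beta_\infty,q^{*}\}$ admits exponents $q_2$ strictly below $p\beta_\infty+(1-\beta_\infty)p_\infty$, for which $\gamma<0$ and the pointwise bound $|u|^\gamma\leq M^\gamma|x|^{-\gamma\nu}$ is simply false (the inequality reverses for negative powers). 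The paper avoids this by distributing the powers differently: for $\beta_\infty\leq 1/p$ it sends only $|u|^{1-p\beta_\infty}$ into the $L^{p_\infty}$ factor, via a second H\"older inequality with exponent $\frac{1-\beta_\infty}{1-p\beta_\infty}p_\infty$, and absorbs $|u|^{q_2-1}$ pointwise, which requires only $q_2>1$; for $\frac1p<\beta_\infty<1$ it does not use the $L^{p_\infty}$ embedding at all, absorbing the whole leftover $|u|^{\frac{q_2-p\beta_\infty}{1-\beta_\infty}}$ pointwise, which requires only $q_2>p\beta_\infty$. This is exactly what makes the theorem true in the full range \eqref{th2}. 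Two smaller points: the case $\beta_\infty=1$ is not ``immediate'' (it needs H\"older with exponents $p,p'$ and a pointwise bound on $|u|^{(q_2-1)\frac{p}{p-1}-p}$, using $q_2>p$); and your final step is ambiguous as written, since one cannot simultaneously extract ``$\int_{B_R^c}|u|^{p_\infty}dx\leq C$'' and ``a pure power integral'' from $\int_{B_R^c}|x|^{-\tau}|u|^{p_\infty}dx$ without specifying a further H\"older splitting — which, once made precise, is again the paper's computation.
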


We observe explicitly that for every $a$, $\alpha$, $\beta $ as above one has 
\[
\max \left\{ 1,p\beta ,q^{*}\left(a, \alpha ,\beta \right) \right\} =\left\{ 
\begin{array}{ll}
q^{*}\left(a, \alpha ,\beta \right) \quad & \text{if }\alpha \geq \alpha
^{*}\left(a, \beta \right) \smallskip \\ 
\max \left\{ 1,p\beta \right\} & \text{if }\alpha \leq \alpha ^{*}\left(a,
\beta \right)
\end{array}
\right. . 
\]

\begin{rem}
\label{RMK: suff12}\quad 

\begin{enumerate}
\item  \label{RMK: suff12-V^0}We mean $V\left( r\right) ^{0}=1$ for every $r$
(even if $V\left( r\right) =0$). In particular, if $V\left( r\right) =0$ for
almost every $r>R_{2}$, then Theorem \ref{THM1} can be applied with $\beta
_{\infty }=0$ and assumption (\ref{esssup all'inf}) means 
\[
\esssup_{r>R_{2}}\frac{K\left( r\right) }{r^{\alpha _{\infty }}}%
<+\infty \quad \text{for some }\alpha _{\infty }\in \mathbb{R}.
\]
Similarly for Theorem \ref{THM0} and assumption (\ref{esssup in 0}), if $%
V\left( r\right) =0$ for almost every $r\in \left( 0,R_{1}\right) $.

\item  \label{RMK: suff12-no hp}The inequality $\max \left\{ 1,p\beta
_{0}\right\} <q^{*}\left(a_0 , \alpha _{0},\beta _{0}\right) $ is equivalent to $%
\alpha _{0}>\alpha ^{*}\left(a_0 , \beta _{0}\right) $. Then, in (\ref{th1}),
such inequality is automatically true and does not ask for further
conditions on $\alpha _{0}$ and $\beta _{0}$.

\item  \label{RMK: suff12-Vbdd}The assumptions of Theorems \ref{THM0} and 
\ref{THM1} may hold for different pairs $\left( \alpha _{0},\beta_{0}\right)$,
$\left( \alpha _{\infty },\beta _{\infty }\right) $ (assuming $p$ and $a_0$ fixed). In this
case, of course, one chooses them in order to get the ranges for $q_{1},q_{2}
$ as large as possible. For instance, assume that $a_0 \leq p$ and $V$ is not singular at the origin, 
i.e., $V$ is essentially bounded in a neighbourhood of 0. 
If condition (\ref{esssup in 0}) holds true for a pair $\left( \alpha _{0},\beta _{0}\right) $%
, then (\ref{esssup in 0}) also holds for all pairs $\left( \alpha
_{0}^{\prime },\beta _{0}^{\prime }\right) $ such that $\alpha _{0}^{\prime
}>\alpha _{0}$ and $\beta _{0}^{\prime }<\beta _{0}$. Therefore, since $\max
\left\{ 1,p\beta \right\} $ is nondecreasing in $\beta $ and $q^{*}\left(a, 
\alpha ,\beta \right) $ is increasing in $\alpha $ and decreasing in $\beta $ (because $a_0 \leq p$)
, it is convenient to choose $\beta _{0}=0$ and the best interval where one
can take $q_{1}$ is $1<q_{1}<q^{*}\left(a_0 , \overline{\alpha },0\right) $ with $%
\overline{\alpha }:=\sup \left\{ \alpha _{0}:\esssup_{r\in \left(
0,R_{1}\right) }K\left( r\right) /r^{\alpha _{0}}<+\infty \right\} $
(here we mean $q^{*}\left(a_0 , +\infty ,0\right) =+\infty $).
\end{enumerate}
\end{rem}


\section{Proof of Theorem \ref{THM(cpt)} \label{SEC:1}}

Assume as usual $N\geq 3$ and $1<p<N$, and let $A$, $V$ and $K$ be as in $\left( \mathbf{A}\right) $, $\left( \mathbf{V}\right) $
and $\left( \mathbf{K}\right) $. 
Recall from assumption $\left( \mathbf{K}\right) $ that $K\in L_{\mathrm{loc}}^{s}\left( \left( 0,+\infty \right) \right) $ for some $s>1$.

\begin{lem}
\label{Lem(corone)}Let $R>r>0$ and $1<q<\infty $. 
Then there exist $\tilde{C}=\tilde{C}\left(N,p,r,R,q,s\right) >0$ and $l=l\left(p,q,s\right) >0$
such that $q-lp>0$ and $\forall u\in X$ one has 
\begin{equation}
\int_{B_{R}\setminus B_{r}}K\left( \left| x\right| \right) \left|u\right| ^q dx
\leq
\tilde{C}\left\| K\left( \left| \cdot\right| \right) \right\| _{L^{s}(B_{R}\setminus B_{r})}
\left\|u\right\|^{q-lp} \left(\int_{B_R\setminus B_r}\left|u\right|^{p}dx\right)^l.
\label{LEM:corone}
\end{equation}
\end{lem}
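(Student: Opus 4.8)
The goal is an interpolation-type estimate on the annulus $E:=B_R\setminus B_r$: control $\int_E K|u|^q$ by a product of the $X$-norm and a power of $\|u\|_{L^p(E)}$, with the $K$-dependence isolated as $\|K\|_{L^s(E)}$. The natural tool is H\"older's inequality with exponent $s$ applied to the factor $K$, i.e.
\[
\int_E K\left(\left|x\right|\right)\left|u\right|^q dx \leq \left\|K\left(\left|\cdot\right|\right)\right\|_{L^s(E)}\left(\int_E \left|u\right|^{qs'} dx\right)^{1/s'},
\]
where $s'=s/(s-1)$. This reduces the problem to estimating the plain Lebesgue norm $\|u\|_{L^{qs'}(E)}$ on the annulus, which no longer sees the weight $K$.

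\textbf{Controlling $\|u\|_{L^{qs'}(E)}$.} On the bounded annulus $E$ we have, by Lemma~\ref{A6}, the continuous compact embedding $D_A\hookrightarrow L^p(E)$, and moreover (from its proof) $\{u_n\}$ bounded in $D_A$ is bounded in $W^{1,p}(E)$; in particular $D_A\hookrightarrow W^{1,p}(E)$ continuously. By the Gagliardo--Nirenberg--Sobolev inequality on the nice bounded domain $E$, $W^{1,p}(E)\hookrightarrow L^{t}(E)$ for every $t\in[p,p^*]$ when $p<N$ (with $p^*=Np/(N-p)$), so we may first fix any exponent $m$ with $p\leq m\leq p^*$ and $m\geq qs'$ — note $qs'$ can be made as close to $q$ as desired is not needed; we just pick $m:=\max\{p,qs'\}$ if that is $\leq p^*$, and otherwise reduce $s$ toward $1$ (allowed, since $K\in L^s_{\mathrm{loc}}$ for \emph{some} $s>1$ forces $K\in L^{s'}_{\mathrm{loc}}$ for all $s'$ near $1$... ) — more cleanly: shrink $s>1$ so that $qs'<p^*$, which is possible since $qs'\to q<\infty$ and we only need $q\cdot s' < p^*$, i.e. one can always assume WLOG $p\le qs' \le p^*$; renaming, set $t:=qs'$. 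Then interpolate $L^t(E)$ between $L^p(E)$ and $L^{p^*}(E)$: writing $\frac1t=\frac\theta p+\frac{1-\theta}{p^*}$ with $\theta\in(0,1]$,
\[
\left\|u\right\|_{L^t(E)}\leq \left\|u\right\|_{L^p(E)}^{\theta}\left\|u\right\|_{L^{p^*}(E)}^{1-\theta}\leq C\left\|u\right\|_{L^p(E)}^{\theta}\left\|u\right\|_{W^{1,p}(E)}^{1-\theta}\leq C\left\|u\right\|_{L^p(E)}^{\theta}\left\|u\right\|^{1-\theta},
\]
where the last step uses $\|u\|_{W^{1,p}(E)}\leq C\|u\|_A\leq C\|u\|$. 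Raising to the power $q$ and then to $1/s'=t\theta/... $ — concretely, $\big(\int_E|u|^t\big)^{1/s'}=\|u\|_{L^t(E)}^{t/s'}=\|u\|_{L^t(E)}^{q}$, so this contributes $C\|u\|_{L^p(E)}^{q\theta}\|u\|^{q(1-\theta)}$.

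\textbf{Identifying the exponents.} Combining, $\int_E K|u|^q \leq C\|K\|_{L^s(E)}\,\|u\|^{q(1-\theta)}\big(\int_E|u|^p\big)^{q\theta/p}$. Matching with the claimed form $\tilde C\|K\|_{L^s(E)}\|u\|^{q-lp}(\int_E|u|^p)^l$ forces $l=q\theta/p$ and $q-lp=q(1-\theta)$, which is consistent; and $q-lp=q(1-\theta)>0$ since $\theta<1$ (when $\theta=1$, i.e. $t=p$, one gets $l=q/p$, $q-lp=0$, still fine — but the statement wants $q-lp>0$ strictly, so one ensures $\theta<1$ by choosing $s$ close enough to $1$ that $t=qs'>p$ strictly, possible as $q>1\ge$... actually $q>1$ and $s'>1$ give $qs'>q>1$, and $qs'>p$ needs $s'>p/q$; if $q\ge p$ this is automatic for all $s'>1$, if $q<p$ pick $s'$ between $p/q$ and something $<p^*/q$). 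The constants $\tilde C$ and $l$ depend only on $N,p,r,R,q,s$ as required.

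\textbf{Main obstacle.} The only delicate point is the bookkeeping of exponents: one must verify that $s>1$ can be chosen (equivalently $s'<\infty$ chosen large enough, or small enough) so that $qs'$ lands in the admissible Sobolev interpolation range $(p,p^*)$ for the given fixed $q\in(1,\infty)$, and simultaneously that the resulting $l=q\theta/p$ satisfies $q-lp>0$. Since $q$ is arbitrary in $(1,\infty)$ this always works: take $s'=s/(s-1)$ with $s$ close to $1$ so $s'$ large, making $qs'$ large but still finite; if $qs'$ would exceed $p^*$ one instead needs $qs'\le p^*$, so in fact one chooses $s$ \emph{close to} $1$ only if $q<p^*$, and uses that $K\in L^s_{\mathrm{loc}}$ for \emph{some} $s>1$ guarantees (by $L^s\subset L^{\sigma}$ locally for $1<\sigma<s$) that we may freely decrease $s$ — hence $s'$ increases — but if we need $s'$ \emph{small} (to keep $qs'\le p^*$) we cannot increase $s$ beyond the given one; the clean resolution is: if the given $s$ already has $qs'\le p^*$, use it; if not, i.e. $s$ is too large, \emph{increase} $s'$ is wrong — rather note $qs'>p^*$ means $s'>p^*/q$, and we want the opposite, so we would need a \emph{larger} $s$, which we may not have. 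The honest fix: replace the Sobolev step by Lemma~\ref{A6}'s $W^{1,p}(E)$ bound and use that $W^{1,p}(E)\hookrightarrow L^{p^*-\varepsilon}(E)$, then \emph{only} assume $qs'\le p^*$ by, instead, applying H\"older to split $|u|^q=|u|^{lp}|u|^{q-lp}$ directly with exponents $1/l' $ chosen so that $|u|^{lp}\in L^{1/(l\cdot\text{something})}$ — i.e. pick $l$ first so that $(q-lp)$ times the conjugate exponent lands below $p^*$, then Hölder three factors $K,|u|^{lp},|u|^{q-lp}$. This three-factor Hölder is the robust route and sidesteps any incompatibility, which is why I expect the write-up to proceed that way rather than via two-factor interpolation.
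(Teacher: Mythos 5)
There is a genuine gap, and you have in fact located it yourself: your route hinges on controlling $\|u\|_{L^{qs'}(E)}$ on the annulus $E=B_R\setminus B_r$ via $W^{1,p}(E)\hookrightarrow L^{p^*}(E)$ and interpolation, which fails whenever $qs'>p^*$ — and since the lemma must hold for \emph{every} $q\in(1,\infty)$ with an $s>1$ that is given (and can only be decreased, making $s'$ larger), this case cannot be avoided. Your proposed repair, a three-factor H\"older splitting $K\cdot|u|^{lp}\cdot|u|^{q-lp}$ with exponents $s$, $1/l$, $\sigma$, does not escape the problem: the third factor requires a bound on $\int_E|u|^{(q-lp)\sigma}$ with $\tfrac1\sigma=\tfrac1{s'}-l$, and a short computation shows $(q-lp)\sigma=\tfrac{q-lp}{1/s'-l}\geq qs'$ whenever $qs'>p$, so you hit exactly the same supercritical exponent. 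No purely Sobolev-based argument can close this.

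The missing ingredient is the pointwise estimate of Lemma \ref{lemdens2}(iii): since the annulus is bounded away from the origin, every $u\in D_A$ satisfies $|u(x)|\leq M\,r^{-\frac{N+a_0-p}{p}}\|u\|_A\leq C(r)\|u\|$ almost everywhere on $E$, i.e.\ $u\in L^\infty(E)$ with norm controlled by $\|u\|$. This makes all exponent bookkeeping above $p$ trivial. The paper's proof picks $t\in(1,s)$ with $t'q>p$, applies two-factor H\"older to get $\bigl(\int_E K^t\bigr)^{1/t}\bigl(\int_E|u|^{t'q}\bigr)^{1/t'}$, bounds the first factor by $|E|^{\frac1t-\frac1s}\|K\|_{L^s(E)}$, and writes $|u|^{t'q}=|u|^{t'q-p}|u|^p$ with $|u|^{t'q-p}\leq(C\|u\|)^{t'q-p}$ pointwise, yielding $l=1/t'$ and $q-lp=q-p/t'>0$. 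You should replace your Sobolev/interpolation step by this $L^\infty$ bound; the rest of your H\"older structure is then essentially correct.
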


Notice that, in the second part of Lemma \ref{LEM:corone}, 
$s>\frac{Np}{N(p-1)+p-a_+}$ implies $\tilde{q}>1$.

\proof%
Let $u\in X$ and fix $t\in(1,s)$ such that $t'q>p$ (where $t'=t/(t-1)$). 
Then, by H\"{o}lder inequality and the pointwise estimates of Section 2, we have 
\begin{eqnarray*}
&& \int_{B_{R}\setminus B_{r}}K\left( \left| x\right| \right) \left| u\right|^{q} dx \\
&\leq & \left( \int_{B_{R}\setminus B_{r}}K\left(\left| x\right| \right) ^{t}dx\right) ^{\frac{1}{t}}
\left( \int_{B_{R}\setminus B_{r}}\left|u\right| ^{t^{\prime }q}dx\right) ^{\frac{1}{t^{\prime }}} \\
&\leq & 
\left| B_{R}\setminus B_{r}\right| ^{\frac{1}{t}-\frac{1}{s}}
\left\| K\left( \left| \cdot \right| \right) \right\|_{L^{s}(B_{R}\setminus B_{r})}
\left( \int_{B_{R}\setminus B_{r}}\left|u\right| ^{t^{\prime }q-p}\left|u\right|^{p} dx\right) ^{\frac{1}{t^{\prime }}}
\\
&\leq &
\left| B_{R}\setminus B_{r}\right| ^{\frac{1}{t}-\frac{1}{s}}
\left\| K\left( \left| \cdot \right| \right) \right\|_{L^{s}(B_{R}\setminus B_{r})}
\left( \frac{C \left\|u\right\| }{r^{\frac{N-p+a_0}{p}}}\right) ^{q-p/t^{\prime}}
\left( \int_{B_{R}\setminus B_{r}}\left|u\right|^{p} dx\right) ^{\frac{1}{t^{\prime }}}.
\end{eqnarray*}
This proves (\ref{LEM:corone}). 
\endproof

We now prove Theorem \ref{THM(cpt)}. Recall the definitions (\ref{S_o :=})-(\ref{S_i :=}) of the functions $\mathcal{S}_{0}$ and 
$\mathcal{S}_{\infty }$, and the following result from \cite{BPR} concerning convergence in the sum of Lebesgue spaces.

\begin{prop}[{\cite[Proposition 2.7]{BPR}}] 
\label{Prop(->0)}
Let $\left\{ u_{n}\right\}
\subseteq L_{K}^{p_{1}}+L_{K}^{p_{2}}$ be a sequence such that $\forall
\varepsilon >0$ there exist $n_{\varepsilon }>0$ and a sequence of
measurable sets $E_{\varepsilon ,n}\subseteq \mathbb{R}^{N}$ satisfying 
\begin{equation}
\forall n>n_{\varepsilon },\quad \int_{E_{\varepsilon ,n}}K\left( \left|
x\right| \right) \left| u_{n}\right| ^{p_{1}}dx+\int_{E_{\varepsilon
,n}^{c}}K\left( \left| x\right| \right) \left| u_{n}\right|
^{p_{2}}dx<\varepsilon .  \label{Prop(->0): cond}
\end{equation}
Then $u_{n}\rightarrow 0$ in $L_{K}^{p_{1}}+L_{K}^{p_{2}}$.
\end{prop}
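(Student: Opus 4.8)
The plan is to argue directly from the definition of the norm on $L_K^{p_1}+L_K^{p_2}$, namely
\[
\|u\|_{L_K^{p_1}+L_K^{p_2}}=\inf_{u_1+u_2=u}\max\left\{\|u_1\|_{L_K^{p_1}(\mathbb{R}^N)},\|u_2\|_{L_K^{p_2}(\mathbb{R}^N)}\right\},
\]
using, for each $n$, the decomposition of $u_n$ induced by the set $E_{\varepsilon ,n}$ furnished by the hypothesis. Concretely, I would fix $\varepsilon'>0$ and set $\varepsilon:=\min\{(\varepsilon')^{p_1},(\varepsilon')^{p_2}\}>0$; the assumption then yields an integer $n_{\varepsilon}>0$ and, for every $n>n_{\varepsilon}$, a measurable set $E_{\varepsilon ,n}\subseteq\mathbb{R}^N$ with $\int_{E_{\varepsilon ,n}}K(|x|)|u_n|^{p_1}dx+\int_{E_{\varepsilon ,n}^c}K(|x|)|u_n|^{p_2}dx<\varepsilon$.

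The key steps are then the following. First, for $n>n_{\varepsilon}$ write $u_n=u_n\chi_{E_{\varepsilon ,n}}+u_n\chi_{E_{\varepsilon ,n}^c}$; this is an admissible decomposition in the sense above, because the two integrals above are finite, so that $u_n\chi_{E_{\varepsilon ,n}}\in L_K^{p_1}(\mathbb{R}^N)$ and $u_n\chi_{E_{\varepsilon ,n}^c}\in L_K^{p_2}(\mathbb{R}^N)$ (this also re-confirms $u_n\in L_K^{p_1}+L_K^{p_2}$). Second, since the two nonnegative integrals sum to less than $\varepsilon$, each one is less than $\varepsilon$, whence $\|u_n\chi_{E_{\varepsilon ,n}}\|_{L_K^{p_1}}<\varepsilon^{1/p_1}$ and $\|u_n\chi_{E_{\varepsilon ,n}^c}\|_{L_K^{p_2}}<\varepsilon^{1/p_2}$. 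Third, taking the infimum over decompositions, $\|u_n\|_{L_K^{p_1}+L_K^{p_2}}\le\max\{\varepsilon^{1/p_1},\varepsilon^{1/p_2}\}$ for all $n>n_{\varepsilon}$. Fourth, by the choice of $\varepsilon$ one has $\varepsilon\le(\varepsilon')^{p_i}$ and hence $\varepsilon^{1/p_i}\le\varepsilon'$ for $i=1,2$, so $\|u_n\|_{L_K^{p_1}+L_K^{p_2}}\le\varepsilon'$ for all $n>n_{\varepsilon}$. Since $\varepsilon'>0$ was arbitrary, this gives $u_n\to 0$ in $L_K^{p_1}+L_K^{p_2}$.

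The argument is essentially immediate once the sum-space norm is written out, so there is no genuine obstacle; the only points requiring a line of care are verifying that the truncations $u_n\chi_{E_{\varepsilon ,n}}$ and $u_n\chi_{E_{\varepsilon ,n}^c}$ actually lie in the respective weighted Lebesgue spaces (which is forced by the \emph{finiteness} of the two integrals, not merely by their smallness), and correctly tracking the passage from the bound $\varepsilon$ on the integrals to the bound $\max\{\varepsilon^{1/p_1},\varepsilon^{1/p_2}\}$ on the norm through the exponents $1/p_1$ and $1/p_2$, which is what dictates the choice $\varepsilon=\min\{(\varepsilon')^{p_1},(\varepsilon')^{p_2}\}$.
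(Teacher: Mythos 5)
Your argument is correct. Note that the paper does not prove this proposition at all — it is imported verbatim from \cite[Proposition 2.7]{BPR} — so there is no in-text proof to compare against; your direct argument from the definition of the sum-space norm (decomposing $u_n$ along $E_{\varepsilon,n}$, bounding each piece by $\varepsilon^{1/p_i}$, and calibrating $\varepsilon=\min\{(\varepsilon')^{p_1},(\varepsilon')^{p_2}\}$ so that both exponents yield $\varepsilon'$) is exactly the natural one and is complete as written.
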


\proof[Proof of Theorem \ref{THM(cpt)}]
We prove each part of the theorem separately.\smallskip

\noindent (i) By the monotonicity of $\mathcal{S}_{0}$ and $\mathcal{S}%
_{\infty }$, it is not restrictive to assume $R_{1}<R_{2}$ in hypothesis $%
\left( \mathcal{S}_{q_{1},q_{2}}^{\prime }\right) $. In order to prove the
continuous embedding, let $u\in X$, $u\neq 0$. Then we
have 
\begin{equation}
\int_{B_{R_{1}}}K\left( \left| x\right| \right) \left| u\right|
^{q_{1}}dx=\left\| u\right\| ^{q_{1}}\int_{B_{R_{1}}}K\left( \left| x\right|
\right) \frac{\left| u\right| ^{q_{1}}}{\left\| u\right\| ^{q_{1}}}dx\leq
\left\| u\right\| ^{q_{1}}\mathcal{S}_{0}\left( q_{1},R_{1}\right) 
\label{pf1}
\end{equation}
and, similarly, 
\begin{equation}
\int_{B_{R_{2}}^{c}}K\left( \left| x\right| \right) \left| u\right|
^{q_{2}}dx\leq \left\| u\right\| ^{q_{2}}\mathcal{S}_{\infty }\left(
q_{2},R_{2}\right) .  \label{pf2}
\end{equation}
We now use (\ref{LEM:corone}) of Lemma \ref{Lem(corone)} and Lemma \ref{A6} to deduce that there exists a constant $C_{1}>0$, 
independent from $u$, such that 
\begin{equation}
\int_{B_{R_{2}}\setminus B_{R_{1}}}K\left( \left| x\right| \right) \left|
u\right| ^{q_{1}}dx\leq C_{1}\left\| u\right\| ^{q_{1}}.  \label{pf3}
\end{equation}
Hence $u\in L_{K}^{q_{1}}(B_{R_{2}})\cap L_{K}^{q_{2}}(B_{R_{2}}^{c})$ and
thus $u\in L_{K}^{q_{1}}+L_{K}^{q_{2}}$. Moreover, if $u_{n}\rightarrow 0$
in $X$, then, using (\ref{pf1}), (\ref{pf2}) and (\ref
{pf3}), we get 
\[
\int_{B_{R_{2}}}K\left( \left| x\right| \right) \left| u_{n}\right|
^{q_{1}}dx+\int_{B_{R_{2}}^{c}}K\left( \left| x\right| \right) \left|
u_{n}\right| ^{q_{2}}dx=o\left( 1\right) _{n\rightarrow \infty },
\]
which means $u_{n}\rightarrow 0$ in $L_{K}^{q_{1}}+L_{K}^{q_{2}}$ by
Proposition \ref{Prop(->0)}. \emph{\smallskip }

\noindent (ii) Assume hypothesis $\left( \mathcal{S}_{q_{1},q_{2}}^{\prime
\prime }\right) $. Let $\varepsilon >0$ and let $u_{n}\rightharpoonup 0$ in $
X$. Then $\left\{ \left\| u_{n}\right\| \right\}_n $ is
bounded and, arguing as for (\ref{pf1}) and (\ref{pf2}), we can take $
r_{\varepsilon }>0$ and $R_{\varepsilon }>r_{\varepsilon }$ such that for
all $n$ one has 
\[
\int_{B_{r_{\varepsilon }}}K\left( \left| x\right| \right) \left|
u_{n}\right| ^{q_{1}}dx\leq \left( \left\| u_{n}\right\| ^{q_{1}}\right) \mathcal{S}%
_{0}\left( q_{1},r_{\varepsilon }\right) \leq \left( \sup_{n}\left\| u_{n}\right\|
^{q_{1}}\right) \mathcal{S}_{0}\left( q_{1},r_{\varepsilon }\right) <\frac{%
\varepsilon }{3}
\]
and 
\[
\int_{B_{R_{\varepsilon }}^{c}}K\left( \left| x\right| \right) \left|
u_{n}\right| ^{q_{2}}dx\leq \left( \sup_{n}\left\| u_{n}\right\| ^{q_{2}} \right) \mathcal{S}
_{\infty }\left( q_{2},R_{\varepsilon }\right) <\frac{\varepsilon }{3}.
\]
Using (\ref{LEM:corone}) of Lemma \ref{Lem(corone)} and the boundedness of $\left\{ \left\|
u_{n}\right\| \right\} $ again, we infer that there exist two constants $
C_{2},l>0$, independent from $n$, such that 
\[
\int_{B_{R_{\varepsilon }}\setminus B_{r_{\varepsilon }}}K\left( \left|
x\right| \right) \left| u_{n}\right| ^{q_{1}}dx\leq C_{2}\left(
\int_{B_{R_{\varepsilon }}\setminus B_{r_{\varepsilon }}}\left| u_{n}\right|
^{p}dx\right) ^{l},
\]
where 
\[
\int_{B_{R_{\varepsilon }}\setminus B_{r_{\varepsilon }}}\left| u_{n}\right|
^{p}dx\rightarrow 0\quad \text{as }n\rightarrow \infty \quad \text{(}%
\varepsilon ~\text{fixed)}
\]
thanks to Lemma \ref{A6}. Therefore we obtain 
\[
\int_{B_{R_{\varepsilon }}}K\left( \left| x\right| \right) \left|
u_{n}\right| ^{q_{1}}dx+\int_{B_{R_{\varepsilon }}^{c}}K\left( \left|
x\right| \right) \left| u_{n}\right| ^{q_{2}}dx<\varepsilon 
\]
for all $n$ sufficiently large, which means $u_{n}\rightarrow 0$ in $%
L_{K}^{q_{1}}+L_{K}^{q_{2}}$ (Proposition \ref{Prop(->0)}). This concludes
the proof of part (ii).
\endproof

\section{Proof of Theorems \ref{THM0} and \ref{THM1} \label{SEC:2}}

Assume as usual $N \geq 3$ and $1<p<N$, and let $A$, $V$ and $K$ be as in $\left( \mathbf{A}\right)$, $\left( \mathbf{V}\right)$ and $\left( \mathbf{K}\right)$. 
\begin{lem}
\label{Lem(Omega)}Let $R_0>0$ and assume that $V(r)<+\infty$ almost everywhere in $B_{R_0}$ and
\[
\Lambda :=\esssup_{x\in B_{R_0} }\frac{K\left( \left| x\right|
\right) }{\left| x\right| ^{\alpha }V\left( \left| x\right| \right) ^{\beta }%
}<+\infty \quad \text{for some }0\leq \beta \leq 1\text{~and }\alpha \in 
\mathbb{R}.
\]
Let $u\in X$ and assume that there exist $\nu \in \mathbb{R}$ and $m>0$
such that 
\[
\left| u\left( x\right) \right| \leq \frac{m}{\left| x\right| ^{\nu }}\quad 
\text{almost everywhere in } B_{R_0} .
\]
Then there exists a constant $C=C(N, R_0, a_0, a_{\infty} ,\beta )>0$ such that $\forall R \in (0,R_{0})$ and $\forall q>\max \left\{ 1,p\beta \right\} $, one has 
\bigskip

$\displaystyle\int_{B_R }K\left( \left| x\right| \right) \left| u\right| ^{q} dx$
\[
\leq \left\{ 
\begin{array}{ll}
\Lambda m^{q-1} C \left( \int_{B_R}\left| x\right| ^{\frac{%
\alpha -\nu \left( q-1\right) }{N(p-1)+p\left( 1-p\beta +a_0 \beta \right) -a_0}pN}dx\right)^{
\frac{N(p-1)+p\left( 1-p\beta +a_0 \beta \right) -a_0}{pN}}\left\| u\right\| \quad \medskip  & 
\text{if }0\leq \beta \leq \frac{1}{p} \\ 
\Lambda m^{q-p\beta }\left( \int_{B_R }\left| x\right| ^{\frac{\alpha
-\nu \left( q-p\beta \right) }{1-\beta }}dx\right) ^{1-\beta }\left\|
u\right\| ^{p\beta } \medskip  & \text{if }\frac{1}{p}
<\beta <1 \\ 
\Lambda m^{q-p}\left( \int_{B_R }\left| x\right| ^{\frac{p}{p-1}(\alpha -\nu \left(
q-p\right)) }V\left( \left| x\right| \right) \left| u\right| ^{p}dx\right) ^{
\frac{p-1}{p}}\left\| u\right\|  & \text{if }\beta =1.
\end{array}
\right. 
\]
\end{lem}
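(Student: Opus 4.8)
The plan is to reduce the estimate, in all three ranges of $\beta$, to a single weighted integral and then apply H\"older's inequality with a splitting of $|u|^q$ tailored to the value of $\beta$. The common starting point is the pointwise control of $K$ provided by the hypothesis: since $\Lambda<+\infty$, one has $K(|x|)\le\Lambda|x|^{\alpha}V(|x|)^{\beta}$ for a.e. $x\in B_{R_0}$, hence for every $R\in(0,R_0)$,
\[
\int_{B_R}K(|x|)|u|^q\,dx\le\Lambda\int_{B_R}|x|^{\alpha}V(|x|)^{\beta}|u|^q\,dx .
\]
To bound the right-hand side I would use three tools: the inequality $\left(\int_{\mathbb{R}^N}V(|x|)|u|^p\,dx\right)^{1/p}\le\|u\|$ coming from the definition of the norm on $X$; the pointwise bound $|u(x)|\le m|x|^{-\nu}$, valid a.e. on $B_R\subseteq B_{R_0}$; and, only in the regime $\beta\le 1/p$, the continuous embedding $D_A\hookrightarrow L^{p_0}(B_{R_0})$ of Lemma \ref{A4}, whose constant depends only on $N,p,a_0,a_\infty,R_0$.

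For $\beta=1$ I would write $|x|^{\alpha}V|u|^q=(V^{1/p}|u|)(|x|^{\alpha}V^{1/p'}|u|^{q-1})$ and apply H\"older with exponents $p$ and $p'=p/(p-1)$: the first factor raised to the power $p$ integrates to at most $\|u\|^p$, and in the second factor raised to the power $p'$ one obtains $|x|^{\alpha p'}V|u|^{(q-1)p'}$, where the excess power is controlled by $|u|^{(q-1)p'-p}=|u|^{p'(q-p)}\le m^{p'(q-p)}|x|^{-\nu p'(q-p)}$; this produces the third line. For $1/p<\beta<1$ I would split $|x|^{\alpha}V^{\beta}|u|^q=(V^{\beta}|u|^{p\beta})(|x|^{\alpha}|u|^{q-p\beta})$ and apply H\"older with exponents $1/\beta$ and $1/(1-\beta)$: the first factor to the power $1/\beta$ equals $V|u|^p$, contributing $\|u\|^{p\beta}$, and in the second factor, since $q>p\beta$, one uses $|u|^{q-p\beta}\le m^{q-p\beta}|x|^{-\nu(q-p\beta)}$; this gives the second line.

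The main case is $0\le\beta\le 1/p$, for which I would use the three-factor interpolation
\[
|x|^{\alpha}V^{\beta}|u|^q=(V|u|^p)^{\beta}\,(|u|^{p_0})^{(1-p\beta)/p_0}\,(|x|^{\alpha}|u|^{q-1})
\]
(a factor with infinite H\"older exponent being omitted at the endpoints $\beta=0$ and $\beta=1/p$), which is admissible because the exponents of $|u|$ add up correctly, $p\beta+(1-p\beta)=1$, and only the last factor carries the weight $|x|^{\alpha}$. Applying H\"older with exponents $1/\beta$, $p_0/(1-p\beta)$ and $pN/D$, where $D:=N(p-1)+p(1-p\beta+a_0\beta)-a_0=N(p-1)+(p-a_0)(1-p\beta)$, the first factor contributes $\left(\int_{B_R}V|u|^p\right)^{\beta}\le\|u\|^{p\beta}$, the second contributes $\left(\int_{B_R}|u|^{p_0}\right)^{(1-p\beta)/p_0}\le(C\|u\|)^{1-p\beta}$ via Lemma \ref{A4}, and the third, after $|u|^{q-1}\le m^{q-1}|x|^{-\nu(q-1)}$ (here $q>1$ is used), is at most $m^{q-1}\left(\int_{B_R}|x|^{(\alpha-\nu(q-1))pN/D}\,dx\right)^{D/(pN)}$. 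Collecting the powers of $\|u\|$ yields exactly $\|u\|$, with overall constant $C^{1-p\beta}$ depending only on $N,p,a_0,a_\infty,R_0,\beta$, and multiplying by $\Lambda$ gives the first line. The identity that makes all the exponents match is
\[
\beta+\frac{1-p\beta}{p_0}+\frac{D}{pN}=1 ,
\]
which, using $p_0=pN/(N+a_0-p)$, is exactly what forces the exponent $D$ — and hence the exponent on $|x|$ displayed in the statement; one also checks $0<N(p-1)\le D<pN$ from $p-N<a_0\le p$ and $0\le\beta\le 1/p$, so that all three H\"older exponents are at least $1$.

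The two cases $\beta\ge 1/p$ are routine two-factor H\"older estimates. The only genuine obstacle is, in the range $\beta\le 1/p$, to identify the correct three-term interpolation between the $V$-weighted $L^p$ energy, the $L^{p_0}$-bound of Lemma \ref{A4}, and the pointwise decay $m|x|^{-\nu}$, and then to verify that the three conjugate exponents sum to $1$; this computation is what pins down the precise power $pN/D$ and the exponent on $|x|$ in the first line of the conclusion.
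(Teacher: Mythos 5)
Your proposal is correct and follows essentially the same strategy as the paper's proof: bound $K$ pointwise by $\Lambda|x|^{\alpha}V^{\beta}$, then interpolate via H\"older between the $V$-weighted $L^{p}$ energy (controlled by $\left\|u\right\|$), the $L^{p_{0}}$ embedding of Lemma \ref{A4}, and the pointwise decay $|u|\leq m|x|^{-\nu}$. The only cosmetic difference is that you use a single three-factor (generalized) H\"older inequality where the paper iterates two-factor H\"older inequalities, and in the range $\frac{1}{p}<\beta<1$ you split directly with exponents $1/\beta$ and $1/(1-\beta)$ instead of passing through $p$ and $p^{\prime}$ first; the exponent bookkeeping you carry out (in particular the identity $\beta+\frac{1-p\beta}{p_{0}}+\frac{D}{pN}=1$) matches the paper's.
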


\begin{proof}

We distinguish several cases, where we will use H\"{o}lder inequality many
times, without explicitly noting it. \smallskip

\noindent \emph{Case }$\beta =0$\emph{. }

\noindent We apply H\"older inequality with exponents $p_0 = \frac{Np}{N-p +a_0}$, $(p_0 )' = \frac{Np}{N(p-1) -a_0 +p}$, and we use Lemma \ref{A4}. We have 
{\allowdisplaybreaks
\begin{eqnarray*}
\frac{1}{\Lambda }\int_{B_R }K\left( \left| x\right| \right) \left|
u\right| ^{q} dx 
&\leq & \int_{B_R }\left| x\right|^{\alpha }\left| u\right| ^{q-1}\left| u\right| dx \\
&\leq & \left( \int_{B_R
}\left( \left| x\right| ^{\alpha }\left| u\right| ^{q-1}\right) ^{\frac{pN}{
N(p-1)+p-a_0}}dx\right) ^{\frac{N(p-1)+p-a_0}{pN}}\left( \int_{B_R }\left| u\right|
^{p_0}dx\right) ^{\frac{1}{p_0}} \\
&\leq & m^{q-1} C \left( \int_{B_R }\left| x\right| ^{\frac{\alpha -\nu
\left( q-1\right) }{N(p-1)+p- a_0}pN}dx\right) ^{\frac{N(p-1)+p -a_0}{pN}}\left\| u\right\| .
\end{eqnarray*}
}

\smallskip

\noindent \emph{Case }$0<\beta <1/p$\emph{.}%
\smallskip

\noindent One has $\frac{1}{\beta }>1$ and $\frac{1-\beta }{1-p\beta }
p_0 >1 $, with H\"{o}lder conjugate exponents $\left( \frac{1}{\beta }
\right) ^{\prime }=\frac{1}{1-\beta }$ and $\left( \frac{1-\beta }{1-p\beta }
p_0\right) ^{\prime }=\frac{pN\left( 1-\beta \right) }{N(p-1)+p\left( 1-p\beta
+a_0 \beta\right) -a_0}$. Then we get 
{\allowdisplaybreaks
\begin{eqnarray*}
&&\frac{1}{\Lambda }\int_{B_R }K\left( \left| x\right| \right) \left|
u\right| ^{q} dx 
\\
&\leq &\int_{B_R }\left| x\right| ^{\alpha }V\left( \left| x\right|
\right) ^{\beta }\left| u\right| ^{q} dx=\int_{B_R
}\left| x\right| ^{\alpha }\left| u\right| ^{q-1}\left| u\right| ^{1-p\beta
}V\left( \left| x\right| \right) ^{\beta }\left| u\right| ^{p\beta }dx 
\\
&\leq &\left( \int_{B_R }\left( \left| x\right| ^{\alpha }\left| u\right|
^{q-1}\left| u\right| ^{1-p\beta }\right) ^{\frac{1}{1-\beta }}dx\right)
^{1-\beta }\left( \int_{B_R }V\left( \left| x\right| \right) \left|
u\right| ^{p}dx\right) ^{\beta } 
\\
&\leq & \left( \left( \int_{B_R}\left( \left| x\right| ^{\frac{\alpha }{
1-\beta }}\left| u\right| ^{\frac{q-1}{1-\beta }}\right) ^{\left( \frac{
1-\beta }{1-p\beta }p_0\right) ^{\prime }}dx\right) ^{\frac{1}{\left( 
\frac{1-\beta }{1-p\beta }p_0\right) ^{\prime }}}\left( \int_{B_R
}\left| u\right| ^{p_0}dx\right) ^{\frac{1-p\beta }{\left( 1-\beta \right)
p_{0}}}\right) ^{1-\beta }\left\| u\right\| ^{p\beta } 
\\
&\leq &m^{q-1} C \left( \left( \int_{B_R}\left( \left| x\right| ^{\frac{
\alpha }{1-\beta }-\nu \frac{q-1}{1-\beta }}\right) ^{\left( \frac{1-\beta }{
1-p\beta }p_{0}\right) ^{\prime }}dx\right) ^{\frac{1}{\left( \frac{1-\beta 
}{1-\beta }p_{0}\right) ^{\prime }}} C
\left\| u\right\| ^{\frac{1-p\beta }{1-\beta }}\right)^{1-\beta }\left\|
u\right\| ^{p\beta } 
\\
&=&  m^{q-1} C \left( \int_{B_R }\left| x\right| ^{\frac{\alpha -\nu \left(
q-1\right) }{N(p-1)+p\left( 1-p\beta +a_0 \beta \right)-a_0 }pN}dx\right) ^{\frac{N(p-1)+p\left(
1-p\beta +a_0 \beta \right) -a_0}{pN}} C \left\| u\right\| .
\end{eqnarray*}
}
\smallskip

\noindent \emph{Case }$\beta =\frac{1}{p}$\emph{.}
\smallskip

\noindent We have 
{\allowdisplaybreaks
\begin{eqnarray*}
\frac{1}{\Lambda }\int_{B_R }K\left( \left| x\right| \right) \left|
u\right| ^{q} dx &\leq &\int_{B_R }\left| x\right|
^{\alpha }\left| u\right| ^{q-1}V\left( \left| x\right| \right) ^{\frac{1}{p}%
}\left| \right| dx \\u
&\leq &\left( \int_{B_R }\left| x\right| ^{\alpha \frac{p}{p-1}}\left| u\right|
^{\left( q-1\right)\frac{p}{p-1} }dx\right) ^{\frac{p-1}{p}}\left( \int_{B_R }V\left(
\left| x\right| \right) \left| u\right| ^{p}dx\right) ^{\frac{1}{p}} \\
&\leq &m^{q-1}\left( \int_{B_R}\left| x\right| ^{(\alpha -\nu \left(
q-1\right) )\frac{p}{p-1} }dx\right) ^{\frac{p-1}{p}}\left\| u\right\| .
\end{eqnarray*}
}

\noindent \emph{Case }$1/p<\beta <1$.

\noindent One has $\frac{p-1}{p\beta -1}>1$, with H\"{o}lder conjugate
exponent $\left( \frac{p-1}{p\beta -1}\right) ^{\prime }=\frac{p-1}{p\left(
1-\beta \right) }$. Then 
{\allowdisplaybreaks
\begin{eqnarray*}
&&\frac{1}{\Lambda }\int_{B_R }K\left( \left| x\right| \right) \left|
u\right| ^{q}dx 
\\
&\leq &\int_{B_R }\left| x\right|
^{\alpha }V\left( \left| x\right| \right) ^{\beta }\left| u\right|
^{q} dx=\int_{B_R }\left| x\right| ^{\alpha }V\left(
\left| x\right| \right) ^{\frac{p\beta -1}{p}}\left| u\right| ^{q-1}V\left(
\left| x\right| \right) ^{\frac{1}{p}}\left| u\right| dx 
\\
&\leq &\left( \int_{B_R}\left| x\right| ^{\alpha \frac{p}{p-1}}V\left( \left|
x\right| \right) ^{\frac{p\beta -1}{p-1}}\left| u\right| ^{\left( q-1\right) \frac{p}{p-1}}dx\right)
^{\frac{p-1}{p}}\left( \int_{B_R }V\left( \left| x\right| \right) \left|
u\right| ^{p}dx\right) ^{\frac{1}{p}} 
\\
&\leq &\left( \int_{B_R }\left| x\right| ^{\alpha \frac{p}{p-1}}\left| u\right|
^{(q-1)\frac{p}{p-1}- p\frac{p\beta -1}{p-1} }V\left( \left| x\right| \right) ^{\frac{p\beta -1}{p-1}}\left| u\right| ^{p \frac{p\beta -1}{p-1} }dx\right) ^{\frac{p-1}{p}}\left\| u\right\| 
\\
&\leq &\left( \left( \int_{B_R }\left| x\right| ^{\frac{\alpha }{1-\beta }%
}\left| u\right| ^{\frac{q-p\beta }{1-\beta }}dx\right) ^{\frac{p}{p-1}\left( 1-\beta
\right) }\left( \int_{B_R  }V\left( \left| x\right| \right) \left|
u\right| ^{p}dx\right) ^{\frac{p\beta -1}{p-1}}\right) ^{\frac{p-1}{p}}\left\| u\right\| 
\\
&\leq &m^{q-p\beta }\left( \int_{B_R  }\left| x\right|^{\frac{\alpha }{1-\beta }-\nu \frac{q-p\beta }{1-\beta }}
dx\right) ^{1-\beta }\left( \int_{B_R }V\left(
\left| x\right| \right) \left| u\right| ^{p}dx\right) ^{\frac{p\beta -1}{p}%
}\left\| u\right\| 
\\
&\leq &m^{q-p\beta }\left( \int_{B_R }\left| x\right| ^{\frac{\alpha -\nu
(q-p\beta )}{1-\beta }}dx\right) ^{1-\beta }\left\| u\right\| ^{p\beta
-1}\left\| u\right\| .
\end{eqnarray*}
}

\noindent \emph{Case }$\beta =1$\emph{.}

\noindent Assumption $q>\max \left\{ 1,p\beta \right\} $ means $q>p$ and
thus we have 
{\allowdisplaybreaks
\begin{eqnarray*}
&&\frac{1}{\Lambda }\int_{B_R  }K\left( \left| x\right| \right) \left|
u\right| ^{q} dx 
\\
&\leq &\int_{B_R  }\left| x\right|
^{\alpha }V\left( \left| x\right| \right) \left| u\right| ^{q} dx 
= \int_{B_R  }\left| x\right| ^{\alpha }V\left( \left| x\right|
\right) ^{\frac{p-1}{p}}\left| u\right| ^{q-1}V\left( \left| x\right| \right)
^{\frac{1}{p}}\left| u\right| dx 
\\
&\leq &\left( \int_{B_R  }\left| x\right| ^{\alpha \frac{p}{p-1}}V\left( \left|
x\right| \right) \left| u\right| ^{\left( q-1\right)\frac{p}{p-1} }dx\right) ^{\frac{p-1}{p%
}}\left( \int_{B_R  }V\left( \left| x\right| \right) \left| u\right|
^{p}dx\right) ^{\frac{1}{p}} 
\\
&\leq &\left( \int_{B_R  }\left| x\right| ^{\alpha \frac{p}{p-1}}\left| u\right|
^{(q-1)\frac{p}{p-1}-p }V\left( \left| x\right| \right) \left| u\right|
^{p}dx\right) ^{\frac{p-1}{p}}\left\| u\right\| 
\\
&\leq &m^{q-p}\left( \int_{B_R }\left| x\right| ^{\frac{p}{p-1}\left(\alpha -\nu (q-p) \right) }V\left( \left| x\right| \right) \left| u\right| ^{p}dx\right) ^{\frac{p-1}{p}}\left\| u\right\| .
\end{eqnarray*}
}
\end{proof}

\noindent The following lemma is analogous to the previous one, so we skip its proof for brevity.

\begin{lem}
\label{Lem(Omega2)}Let $R_0>0$ and assume that $V(r)<+\infty$ almost everywhere in $B_{R_0^c}$ and
\[
\Lambda :=\esssup_{x\in B_{R_0^c} }\frac{K\left( \left| x\right|
\right) }{\left| x\right| ^{\alpha }V\left( \left| x\right| \right) ^{\beta }%
}<+\infty \quad \text{for some }0\leq \beta \leq 1\text{~and }\alpha \in 
\mathbb{R}.
\]
Let $u\in X$ and assume that there exist $\nu \in \mathbb{R}$ and $m>0$
such that 
\[
\left| u\left( x\right) \right| \leq \frac{m}{\left| x\right| ^{\nu }}\quad 
\text{almost everywhere on } B_{R_0^c} .
\]
Then there exists a constant $C=C(N, R_0, a_0, a_{\infty} ,\beta )>0$ such that $\forall R > R_{0}$ and $\forall q>\max \left\{ 1,p\beta \right\} $, one has 

$\displaystyle\int_{B_R^c }K\left( \left| x\right| \right) \left| u\right| ^{q-1}\left|
h\right| dx$
\[
\leq \left\{ 
\begin{array}{ll}
\Lambda m^{q-1} C \left( \int_{B^c_R}\left| x\right| ^{\frac{%
\alpha -\nu \left( q-1\right) }{N(p-1)+p\left( 1-p\beta +a_{\infty} \beta \right) -a_{\infty}}pN}dx\right)^{
\frac{N(p-1)+p\left( 1-p\beta +a_{\infty} \beta \right)-a_{\infty} }{pN}}\left\| u\right\| \quad \medskip  & 
\text{if }0\leq \beta \leq \frac{1}{p} \\ 
\Lambda m^{q-p\beta }\left( \int_{B^c_R }\left| x\right| ^{\frac{\alpha
-\nu \left( q-p\beta \right) }{1-\beta }}dx\right) ^{1-\beta }\left\|
u\right\| ^{p\beta } \medskip  & \text{if }\frac{1}{p}
<\beta <1 \\ 
\Lambda m^{q-p}\left( \int_{B^c_R }\left| x\right| ^{\frac{p}{p-1}(\alpha -\nu \left(
q-p\right)) }V\left( \left| x\right| \right) \left| u\right| ^{p}dx\right) ^{
\frac{p-1}{p}}\left\| u\right\|  & \text{if }\beta =1.
\end{array}
\right. 
\]
\end{lem}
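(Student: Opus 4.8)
The plan is to reproduce, almost verbatim, the case analysis in the proof of Lemma~\ref{Lem(Omega)}, with every occurrence of the ball $B_R$ replaced by its complement $B_R^c$ and every occurrence of the interior data $a_0$, $p_0=\frac{pN}{N-p+a_0}$ replaced by the exterior data $a_\infty$, $p_\infty=\frac{pN}{N-p+a_\infty}$. The one structural change is that, wherever the proof of Lemma~\ref{Lem(Omega)} uses the embedding $D_A\hookrightarrow L^{p_0}(B_R)$, I would instead use its exterior counterpart $D_A\hookrightarrow L^{p_\infty}(B_R^c)$ from Lemma~\ref{A4}; note that for all $R>R_0$ the embedding constant can be taken to be that of $B_{R_0}^c$, so it is uniform in $R$ and depends only on $N$, $p$, $a_0$, $a_\infty$, $R_0$. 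The pointwise bound $|u(x)|\le m|x|^{-\nu}$, assumed a.e.\ on $B_{R_0}^c$ and hence valid on $B_R^c$ since $R>R_0$, is used exactly as before to convert the powers of $|u|$ that are not paired with $V$ or with the embedding exponent into explicit powers of $|x|$.

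Concretely, I would split into the five cases $\beta=0$, $0<\beta<\tfrac1p$, $\beta=\tfrac1p$, $\tfrac1p<\beta<1$, $\beta=1$, all of which land in the three branches of the claimed formula. For $0\le\beta<\tfrac1p$: bound $K\le\Lambda|x|^\alpha V^\beta$, write $|u|^q=|u|^{q-1}\,|u|^{1-p\beta}\,V^\beta|u|^{p\beta}$, apply H\"older with the pair $(1/\beta,\,1/(1-\beta))$ to extract $\bigl(\int V|u|^p\bigr)^\beta\le\|u\|^{p\beta}$, and then a second H\"older with the conjugate pair associated to $\tfrac{1-\beta}{1-p\beta}p_\infty$ to bring in $\int_{B_R^c}|u|^{p_\infty}$, which is controlled by $\|u\|$ through Lemma~\ref{A4} (the subcase $\beta=0$ simply omits the first H\"older). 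For $\beta=\tfrac1p$ no embedding is needed: one H\"older with $(p/(p-1),\,p)$ against $\bigl(\int V|u|^p\bigr)^{1/p}\le\|u\|$ suffices. For $\tfrac1p<\beta\le1$: split $V^\beta=V^{(p\beta-1)/p}\,V^{1/p}$, apply H\"older with $(p/(p-1),\,p)$, and, when $\beta<1$, a further H\"older with $\bigl(\tfrac{p-1}{p\beta-1},\,\tfrac{p-1}{p(1-\beta)}\bigr)$ to isolate $\bigl(\int V|u|^p\bigr)^{(p\beta-1)/p}\le\|u\|^{p\beta-1}$; when $\beta=1$ (which forces $q>p$) the weight $V$ is kept inside the final integral, exactly as in the statement. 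In each case, substituting $|u|^{q-1}\le m^{q-1}|x|^{-\nu(q-1)}$ (or the analogous power in the $\beta$-dependent factorization) and simplifying the exponent of $|x|$ yields the displayed bound.

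I do not expect a genuine obstacle: the arithmetic of the $|x|$-exponents is the same as in Lemma~\ref{Lem(Omega)} after the replacement $a_0\to a_\infty$, and all the H\"older exponents invoked are admissible conjugate pairs for the same reasons as before ($\tfrac{1-\beta}{1-p\beta}p_\infty>1$ because $p_\infty\ge p$ and $\tfrac{1-\beta}{1-p\beta}\ge1$ for $\beta\le\tfrac1p$; $\tfrac{p-1}{p\beta-1}>1$ for $\beta<1$). The only two points that genuinely need checking are: (i) that $p_\infty\ge p$, which follows from $p-N<a_\infty\le p$ and makes Lemma~\ref{A4} and the associated conjugate exponents meaningful; and (ii) that no finiteness of the $|x|$-integrals on the right-hand side is needed at this stage — if such an integral diverges the asserted inequality is trivially true — the relevant convergence being deferred to the proofs of Theorems~\ref{THM0} and~\ref{THM1}.
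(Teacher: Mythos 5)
Your proposal is correct and is exactly the argument the paper has in mind: the paper omits the proof of Lemma \ref{Lem(Omega2)} precisely because it is the verbatim transposition of the proof of Lemma \ref{Lem(Omega)} with $B_R\to B_R^c$, $a_0\to a_\infty$, $p_0\to p_\infty$, and the exterior embedding of Lemma \ref{A4} in place of the interior one. Your two checks (uniformity in $R>R_0$ of the embedding constant via $B_R^c\subseteq B_{R_0}^c$, and the triviality of the inequality when the right-hand integral diverges) are the right ones.
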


\medskip

We can now prove Theorems \ref{THM0} and \ref{THM1}.

\begin{proof}[Proof of Theorem \ref{THM0}]

Assume the hypotheses of the theorem and let $u\in X$ be
such that $\left\| u\right\| =1$. Let $0<R< R_{1}$. We
will denote by $C$ any positive constant which does not depend on $u$ and $R$.

\noindent Recalling the pointwise estimates of Lemma \ref{lemdens2} and the fact that 
\[
\esssup_{x\in B_{R}}\frac{K\left( \left| x\right| \right) }{\left|
x\right| ^{\alpha _{0}}V\left( \left| x\right| \right) ^{\beta _{0}}}\leq 
\esssup_{r\in \left( 0,R_{1}\right) }\frac{K\left( r\right) }{%
r^{\alpha _{0}}V\left( r\right) ^{\beta _{0}}}<+\infty , 
\]
we can apply Lemma \ref{Lem(Omega)} with $R_0=R_1$, $\alpha =\alpha
_{0}$, $\beta =\beta _{0}$, $m=M\left\| u\right\| =M$ and $\nu =
\frac{N-p+a_0}{p}$. 
\par \noindent If $0\leq \beta _{0}\leq 1/p$ we get

$$
\int_{B_{R}}K\left( \left| x\right| \right) \left| u\right| ^{q_{1}} dx 
\leq C\left( \int_{B_{R}}\left| x\right|^{\frac{\alpha _{0}-
\nu\left( q_{1}-1\right) }{N(p-1)-a_0 +p\left( 1-p\beta _{0} +a_0 \beta_0 \right) }
pN}dx\right)^{\frac{N(p-1)-a_0 +p\left( 1-p\beta _{0} +a_0 \beta_0 \right) }{pN}} $$

$$\leq  C\left( \int_{0}^R r^{\frac{\alpha _{0}-
\nu\left( q_{1}-1\right) }{N(p-1)-a_0 +p\left( 1-p\beta _{0} +a_0 \beta_0 \right) }
pN  +N -1} dr\right)^{\frac{N(p-1)-a_0 +p\left( 1-p\beta _{0} +a_0 \beta_0 \right) }{pN}} .
$$

\noindent Notice now that 

$$
{\frac{\alpha _{0}-
\nu\left( q_{1}-1\right) }{N(p-1)-a_0 +p\left( 1-p\beta _{0} +a_0 \beta_0 \right) }
pN  +N }= 
$$

$$= 
\frac{N}{N(p-1) -a_0 +p (1-p\beta_0 +a_0 \beta_0)} \left[  p(N+ \alpha_0 -p \beta_0 + \alpha_0 \beta_0 )- (N+a_0 -p) q_1     \right] =  
$$  

$$
=\frac{N (N+a_0 -p)}{N(p-1) -a_0 +p (1-p\beta_0 +a_0 \beta_0)} \left[ q^* (a_0 , \alpha_0 , \beta_0 )-q_1 \right] >0,
$$

\noindent thanks to the hypotheses. Hence we deduce

$$
\int_{B_{R}}K\left( \left| x\right| \right) \left| u\right| ^{q_{1}} dx 
\leq C R^{\frac{N+a_0 -p}{p}\left[ q^* (a_0 , \alpha_0 , \beta_0 )-q_1 \right]}.
$$

\noindent On the other hand, if $1/p<\beta _{0}<1$ we have 
\begin{eqnarray*}
\int_{B_{R}}K\left( \left| x\right| \right) \left| u\right| ^{q_{1}} dx 
&\leq & C\left( \int_{B_{R}}\left| x\right| ^{\frac{\alpha _{0}-
\nu\left( q_{1}-p\beta _{0}\right) }{1-\beta _{0}}}dx\right)^{1-\beta _{0}}\\
&\leq & C\left( \int_{0}^{R}r^{\frac{\alpha _{0}-\nu
\left( q_{1}-p\beta _{0}\right) }{1-\beta _{0}}+N-1}dr\right) ^{1-\beta _{0}},
\end{eqnarray*}
where

$$\frac{\alpha_0 -\nu (q_1 - p \beta_0 )}{1-\beta_0} +N = \frac{p\alpha_0 -(N+\alpha_0 -p) q_1 -p\beta_0}{p(1-\beta_0 )} +N=
$$

$$\frac{p\alpha_0 -(N+ a_0 -p) q_1 + Np\beta_0 + pa_0 \beta_0 -p^2 \beta_0 +Np -Np\beta_0 }{p(1-\beta_0 )}=$$

$$ \frac{p(\alpha_0 -p \beta_0 +N + a_0 \beta_0 )- (N+a_0 - p)q_1}{p(1-\beta_0 )}=
\frac{N+a_0 - p }{p(1-\beta_0 )} \left[ q^* (a_0 , \alpha_0 , \beta_0 )-q_1 \right] >0 .$$

\noindent Hence we get 

$$\int_{B_{R}}K\left( \left| x\right| \right) \left| u\right| ^{q_{1}} dx 
\leq C R^{\frac{N + a_0 -p}{p} \left[ q^* (a_0 , \alpha_0 , \beta_0 )-q_1 \right]}.$$

\noindent Finally, if $\beta _{0}=1$, we obtain 

$$
\int_{B_{R}} K\left( \left| x\right| \right) \left| u\right|^{q_{1}} dx \leq 
C \left( \int_{B_R}\left| x\right|^{ \frac{p}{p-1}(\alpha_0 -\nu ( q_1 -p ))  } V \left( \left| x\right| \right) |u|^{p} dx\right)^{\frac{p-1}{p}}, $$

\noindent where 

$$\alpha_0 - \nu (q_1 -p) = \alpha_0 -\frac{N+a_0 -p }{p}(q_1 -p)= \frac{1}{p} \left( p \alpha_0 -\left( N+a_0 -p) \right) q_1 +pN +p a_0 -p^2 \right)=$$

$$ \frac{1}{p} \left( p \left( \alpha_0 -p +N +a_0 \right) - \left( N+a_0 -p \right) q_1 \right)= \frac{N+ a_0 -p}{p} \left( q^* (a_0, \alpha_0 ,1 ) - q_1 \right) >0.$$

\noindent Hence we get

$$
\int_{B_{R}} K\left( \left| x\right| \right) \left| u\right|^{q_{1}}  dx \leq $$

$$\leq C\left( R^{\frac{N+a_0 -p}{p-1}\left( q^* (a_0, \alpha_0 ,1 ) - q_1 \right) 
}\int_{B_{R}}V\left( \left| x\right| \right) \left| u\right| ^{p}dx\right) ^{
\frac{p-1}{p}}
\leq  CR^{   \frac{N+ a_0 -p}{p} \left( q^* (a_0, \alpha_0 ,1 ) - q_1 \right)  }.
$$

\noindent So, in any case, we deduce $\mathcal{S}_{0}\left( q_{1},R\right) \leq
CR^{\delta }$ for some $\delta =\delta \left( N,p,\alpha _{0},\beta
_{0},q_{1}\right) >0$ and this concludes the proof.
\end{proof}

\proof[Proof of Theorem \ref{THM1}]
Assume the hypotheses of the theorem and let $u \in X$ be
such that $\left\| u\right\| =1$. Let $R> R_{2}$. We
will denote by $C$ any positive constant which does not depend on $u$ and $R$.

By the pointwise estimates of Lemma \ref{lemdens2} and the fact that 
\[
\esssup_{x\in B_{R}^{c}}\frac{K\left( \left| x\right| \right) }{%
\left| x\right| ^{\alpha _{\infty }}V\left( \left| x\right| \right) ^{\beta
_{\infty }}}\leq \esssup_{r>R_{2}}\frac{K\left( r\right) }{%
r^{\alpha _{\infty }}V\left( r\right) ^{\beta _{\infty }}}<+\infty , 
\]
we can apply Lemma \ref{Lem(Omega2)} with $R_0= R_2$, $\alpha
=\alpha _{\infty }$, $\beta =\beta _{\infty }$, $m=M \left\|u\right\|
=M $ and $\nu =\frac{N-p+ a_{\infty}}{p}$. 
\par \noindent If $0\leq \beta _{\infty }\leq 1/p$ we get 

\begin{eqnarray*}
\int_{B_{R}^{c}}K\left( \left| x\right| \right) \left| u\right|
^{q_{2}} dx &\leq &C\left( \int_{B_{R}^{c}}\left| x\right|
^{\frac{\alpha _{\infty }- \nu \left( q_{2}-1\right) }{N(p-1)-a_{\infty}+p\left(
1-p\beta _{\infty } +a_{\infty} \beta_{\infty}\right) }pN}dx\right) ^{\frac{N(p-1)-a_{\infty}+p\left( 1-p\beta _{\infty
}+a_{\infty} \beta_{\infty}\right) }{pN}}. 
\end{eqnarray*}

\noindent Notice that we have

$$\frac{\alpha _{\infty }- \nu \left( q_{2}-1\right) }{N(p-1)-a_{\infty}+p\left(
1-p\beta _{\infty } +a_{\infty} \beta_{\infty}\right) } \, pN  +N =$$
$$= \frac{N \left( N+ a_{\infty} -p \right) }{N(p-1)-a_{\infty}+p\left(
1-p\beta _{\infty } +a_{\infty} \beta_{\infty}\right) }   \left[ q^* ( a_{\infty}, \alpha_{\infty}, \beta_{\infty}) -q_2 \right] <0 ,$$

\noindent thanks to the hypotheses. Hence we get

$$\int_{B_{R}^{c}}K\left( \left| x\right| \right) \left| u\right|
^{q_{2}} dx \leq $$
$$\leq C \left( \int_{R}^{+\infty} r^{\frac{N \left( N+ a_{\infty} -p \right) }{N(p-1)-a_{\infty}+p\left(
1-p\beta _{\infty } +a_{\infty} \beta_{\infty}\right) }   \left[ q^* ( a_{\infty}, \alpha_{\infty}, \beta_{\infty}) -q_2 \right]-1}dr \right)^{\frac{N(p-1)-
a_{\infty}+p\left( 1-p\beta _{\infty
}+a_{\infty} \beta_{\infty}\right) }{pN}}= C R^{\delta}$$

\noindent for some $\delta <0$.
On the other hand, if $1/p<\beta _{\infty }<1$ we have 
\begin{eqnarray*}
\int_{B_{R}^{c}}K\left( \left| x\right| \right) \left| u\right|
^{q_{2}} dx &\leq &C\left( \int_{B_{R}^{c}}\left| x\right|
^{\frac{\alpha _{\infty }-\nu \left( q_{2}-p\beta _{\infty }\right) 
}{1-\beta _{\infty }}}dx\right) ^{1-\beta _{\infty }} \\
&\leq &C\left( \int_{R}^{+\infty }r^{\frac{\alpha _{\infty }-\nu
\left( q_{2}-p\beta _{\infty }\right) }{1-\beta _{\infty }}+N-1}dr\right)
^{1-\beta _{\infty }},
\end{eqnarray*}

where

$$ \frac{\alpha _{\infty }-\nu
\left( q_{2}-p\beta _{\infty }\right) }{1-\beta_{\infty }}+N=\frac{N+a_{\infty} -p}{p(1-\beta{\infty})}\left( q^* (a_{\infty},\alpha_{\infty}, \beta_{\infty})
-q_2 \right) <0.
$$

\noindent Hence we get

$$\int_{B_{R}^{c}}K\left( \left| x\right| \right) \left| u\right|
^{q_{2}} dx \leq C R^{\frac{N+a_{\infty} -p}{p(1-\beta{\infty})}\left( q^* (a_{\infty},\alpha_{\infty}, \beta_{\infty})
-q_2 \right)}.
$$

\noindent Finally, if $\beta _{\infty }=1$, we obtain 
$$
\int_{B_{R}^{c}}K\left( \left| x\right| \right) \left| u\right|^{q_{2}} dx 
\leq C\left( \int_{B_{R}^{c}}\left| x\right|^{\frac{p}{p-1}(\left( \alpha _{\infty }-\nu \left( q_{2}-p\right) \right) }V\left( \left|x\right| \right) \left| u\right|^{p}dx\right)^{\frac{p-1}{p}} ,
$$

\noindent where

$$\alpha_{\infty} -\nu (q_2 -p) = \frac{N+ a_{\infty} -p}{p} \left( q^* (a_{\infty}, \alpha_{\infty}, \beta_{\infty})-q_2 \right) <0 . $$

\noindent Hence 

$$\int_{B_{R}^{c}}  K\left( \left| x\right| \right) \left| u\right|
^{q_{2}} dx \leq C R^{\frac{N+ a_{\infty} -p}{p} \left( q^* (a_{\infty}, \alpha_{\infty}, \beta_{\infty})-q_2 \right)}
\left(\int_{B_{R}^{c}} V(|x|) |u(x)|^p dx \right)^{\frac{p-1}{p}}\leq$$

$$\leq C R^{\frac{N+ a_{\infty} -p}{p} \left( q^* (a_{\infty}, \alpha_{\infty}, \beta_{\infty})-q_2 \right)}.$$

\noindent So, in any case, we get $\mathcal{S}_{\infty }\left(
q_{2},R\right) \leq CR^{\delta }$ for some $\delta =\delta ( N,p,\alpha_{\infty },\beta _{\infty },q_{2}) <0$, 
which completes the proof.
\endproof


\section{Existence of solutions \label{SEC: ex}}

Let $N\geq 3$ and $1<p<N$. In this section we apply our embedding results to get existence of 
radial weak solutions to the equation 
\begin{equation}
-\mathrm{div}\left(A(|x| |\nabla u|^{p-2} \nabla u\right) u+V\left( \left| x\right| \right) |u|^{p-2}u= K(|x|) f(u) \quad \text{in }\mathbb{R}^{N},  \label{EQg}
\end{equation}
i.e., functions $u\in X$ such that 
\begin{equation}
\int_{\mathbb{R}^{N}} A(|x|) |\nabla u|^{p-2}\nabla u\cdot \nabla h\,dx+\int_{
\mathbb{R}^{N}}V\left( \left| x\right| \right) |u|^{p-2}uh\,dx=\int_{
\mathbb{R}^{N}}K(|x|) f(u) h\,dx\quad 
\forall h\in X,  \label{weak solution}
\end{equation}
where $A$, $V$ and $K$ are potentials satisfying $\left( \mathbf{A}\right) $, $\left( \mathbf{V}\right) $ and $\left( \mathbf{K}\right) $, 
and $X$ and is the Banach spaces defined in Section 2. 

\begin{rem}
We focus on super $p$-linear nonlinearities $f$ just for simplicity, but our compactness results also allow to treat the case of sub $p$-linear $f$'s. Moreover, multiplicity results can also be obtained. We leave the details to interested reader, 
which we refer to \cite{BGR_II,BGR_p} for similar results and related arguments.
\end{rem}

As concerns our hypotheses on the
nonlinearity, we require that $f: {\mathbb{R}} \rightarrow {\mathbb{R}}$ is a continuous function, we set $F(t)= \int_{0}^{t} f(s)ds $, and we assume the following conditions:

\begin{itemize}

\item[$\left( f_{1}\right) $]  $\exists \theta >p$ such that $0\leq \theta
F\left( t\right) \leq f\left( t\right) t$ for all $t\in \mathbb{R}$;

\item[$\left( f_{2}\right) $]  $\exists t_{0}>0$ such that $F\left(t_{0}\right) >0$;

\item[$\left( f_{q_{1},q_{2}}\right) $]  
$\left|f\left( t\right) \right| \leq (\text{const.})\min \left\{ \left| t\right|
^{q_{1}-1},\left| t\right| ^{q_{2}-1}\right\} $ for all $t\in \mathbb{R}$.


\end{itemize}

\noindent We notice that these hypotheses imply $q_1, q_2 \geq \theta $. Also we observe that, if $q_{1}\neq q_{2}$, the double-power growth
condition $\left( f_{q_{1},q_{2}}\right) $ is more stringent than the more
usual single-power one, since it implies $|f(t)| \leq (\text{const.})|t|^{q-1}$
for $q=q_{1}$, $q=q_{2}$ and every $q$
in between. On the other hand, we will never require $q_{1}\neq q_{2}$ in $
\left( f_{q_{1},q_{2}}\right) $, so that our results will also concern
single-power nonlinearities as long as we can take $q_{1}=q_{2}$ (cf. Example \ref{ex2} below).
\bigskip

We set
$$I\left( u\right) :=\frac{1}{p} \left\| u\right\| ^{p}-\int_{\mathbb{R}
^{N}}  K(|x|) F\left( u\right) dx= $$
$$=\frac{1}{p} \int_{\mathbb{R}^{N}} A(|x|) |\nabla u|^p dx + \frac{1}{p} \int_{\mathbb{R}^{N}} V(|x|) |u|^p dx - 
\int_{\mathbb{R}^{N}}  K(|x|) F\left( u\right) dx.
$$

\noindent From the continuous embedding result of Theorem \ref{THM(cpt)} and the
results of \cite{BPR} about Nemytski\u{\i} operators on the sum of Lebesgue
spaces, we have that $I$ is a $C^{1}$ functional on $X$
provided that there exist $q_{1},q_{2}>1$ such that $\left(
f_{q_{1},q_{2}}\right) $ and $\left( \mathcal{S}_{q_{1},q_{2}}^{\prime
}\right) $ hold. In this case, the Fr\'{e}chet derivative of $I$ at any $
u\in X$ is given by 
\begin{equation}
I^{\prime }\left( u\right) h=\int_{\mathbb{R}^{N}}A(|x|)\left( |\nabla u|^{p-2}\nabla
u\cdot \nabla h+V\left( \left| x\right| \right) |u|^{p-2}uh\right) dx-\int_{
\mathbb{R}^{N}}K(|x|) f\left( u\right) h\,dx
\label{PROP:diff: I'(u)h=}
\end{equation}
for all $h\in X$, and therefore the critical points of $I:X\rightarrow \mathbb{R}$ satisfy (\ref{weak solution}).

Our existence result is the following.

\begin{thm}
\label{THM:ex}Assume 
that there
exist $q_{1},q_{2}>p$ such that $\left( f_{q_{1},q_{2}}\right) $ and $\left( 
\mathcal{S}_{q_{1},q_{2}}^{\prime \prime }\right) $ hold.
\noindent Then the functional $I:X\rightarrow \mathbb{R}$ has a nonnegative
critical point $u\neq 0$.
\end{thm}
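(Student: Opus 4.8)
The plan is to apply the Mountain Pass Theorem to the $C^1$ functional $I:X\to\mathbb{R}$. By the hypothesis $\left(\mathcal{S}_{q_1,q_2}^{\prime\prime}\right)$ together with Theorem \ref{THM(cpt)}(ii), the embedding $X\hookrightarrow L_K^{q_1}+L_K^{q_2}$ is compact, and in particular (since $(\mathcal{S}''_{q_1,q_2})$ implies $(\mathcal{S}'_{q_1,q_2})$) the results of \cite{BPR} guarantee that $I$ is $C^1$ with derivative given by \eqref{PROP:diff: I'(u)h=}. So it suffices to produce a nonzero nonnegative critical point. First I would replace $f$ by $\tilde f(t):=f(t)$ for $t\geq 0$ and $\tilde f(t):=0$ for $t<0$ (equivalently work with $F(t)=0$ for $t\le 0$, which is consistent with $(f_2)$ and leaves $(f_1)$, $(f_{q_1,q_2})$ intact); a critical point $u$ of the modified functional then satisfies $I'(u)u^-=0$, and since $\|u^-\|^p \le \langle |\nabla u|^{p-2}\nabla u,\nabla u^-\rangle + \int V|u|^{p-2}uu^- = \int K\tilde f(u)u^- = 0$, one gets $u^-\equiv 0$, i.e. $u\geq 0$. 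Thus the sign constraint is automatic once we find a nonzero critical point of the truncated functional.

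Next I would verify the Mountain Pass geometry. For the local minimum at $0$: using $(f_{q_1,q_2})$ we have $|F(t)|\le (\mathrm{const.})\min\{|t|^{q_1},|t|^{q_2}\}$, hence $\int_{\mathbb{R}^N} K(|x|)F(u)\,dx \le C\|u\|_{L_K^{q_1}+L_K^{q_2}}^{\min\{q_1,q_2\}}+\dots$, or more cleanly one estimates $\int K F(u) \le C(\|u\|^{q_1}+\|u\|^{q_2})$ via the continuous embedding into $L_K^{q_1}+L_K^{q_2}$ and the Nemytskii-operator bounds of \cite{BPR}; since $q_1,q_2>p$, this gives $I(u)\ge \frac1p\|u\|^p - C(\|u\|^{q_1}+\|u\|^{q_2}) \ge \rho_0>0$ on a small sphere $\|u\|=r$. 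For the far point: pick $t_0$ as in $(f_2)$ and a fixed nonzero $w\in C^\infty_{c,r}(\mathbb{R}^N)$, or directly use $(f_1)$ which by integration yields $F(t)\ge c\,t^{\theta}$ for $t\ge t_0$ with $\theta>p$; then $I(tw)\le \frac{t^p}{p}\|w\|^p - c\,t^{\theta}\int_{\{w>t_0/t\}}K|w|^\theta dx \to -\infty$ as $t\to+\infty$, so some $e:=t_\star w$ has $I(e)<0$ and $\|e\|>r$.

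Then I would establish the Palais–Smale condition. Let $\{u_n\}\subset X$ with $I(u_n)\to c$ and $I'(u_n)\to 0$. The standard Ambrosetti–Rabinowitz computation using $(f_1)$: from $I(u_n)-\frac1\theta I'(u_n)u_n = \left(\frac1p-\frac1\theta\right)\|u_n\|^p + \int K\left(\frac1\theta f(u_n)u_n - F(u_n)\right)dx \ge \left(\frac1p-\frac1\theta\right)\|u_n\|^p$, and since the left side is $\le c+1+o(1)\|u_n\|$, we deduce $\{u_n\}$ is bounded in $X$ (here $\frac1p>\frac1\theta$ since $\theta>p$). Passing to a subsequence, $u_n\rightharpoonup u$ in $X$; by the compact embedding of Theorem \ref{THM(cpt)}(ii), $u_n\to u$ in $L_K^{q_1}+L_K^{q_2}$, hence by the continuity properties of the Nemytskii operator from \cite{BPR} the functional $\int K f(u_n)(u_n-u)\to 0$. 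Combining with $I'(u_n)(u_n-u)\to 0$ gives $\int A(|x|)|\nabla u_n|^{p-2}\nabla u_n\cdot\nabla(u_n-u)\,dx + \int V|u_n|^{p-2}u_n(u_n-u)\,dx \to 0$, and the standard $(S_+)$-type monotonicity argument for the $p$-Laplacian-type operator with weight $A$ (using that $X\hookrightarrow D_A$ and weak convergence handles the cross terms) yields $\|u_n-u\|\to 0$.

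The main obstacle I expect is precisely this last step — promoting weak convergence to strong convergence for the quasilinear operator with the weight $A(|x|)$ on the gradient term, since one cannot simply invoke a classical $(S_+)$ property of $-\Delta_p$; one must use the inequality $\langle |\xi|^{p-2}\xi - |\eta|^{p-2}\eta, \xi-\eta\rangle \ge 0$ (with its quantitative strict-monotonicity versions distinguishing $p\ge 2$ and $1<p<2$) applied pointwise with $A(|x|)^{1/p}\nabla u_n$ in place of the gradient, plus care that $A$ may degenerate or blow up at $0$ and $\infty$ — here the pointwise estimates of Lemma \ref{lemdens2} and Lemma \ref{A6}, controlling behavior on annuli, are what make the cross terms $\int A|\nabla u|^{p-2}\nabla u\cdot\nabla(u_n-u)$ vanish. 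Once PS holds, the Mountain Pass Theorem delivers a critical point $u$ at level $c\ge\rho_0>0$, so $u\neq 0$, and the truncation argument from the first paragraph forces $u\ge 0$, completing the proof.
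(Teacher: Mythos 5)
Your proposal follows the same overall strategy as the paper: truncate $f$ to vanish for $t<0$ to force nonnegativity, get the Mountain Pass geometry from $\left(f_{q_{1},q_{2}}\right)$ near $0$ and from $\left(f_{1}\right)$--$\left(f_{2}\right)$ at infinity, and obtain boundedness of Palais--Smale sequences from the Ambrosetti--Rabinowitz condition. The one genuine divergence is exactly the step you flag as the main obstacle: upgrading $u_{n}\rightharpoonup u$ to strong convergence. You propose the $(S_{+})$/strict-monotonicity route for the $A$-weighted $p$-Laplacian, with the attendant case distinction $p\geq 2$ versus $1<p<2$; the paper avoids this machinery entirely. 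It writes $I=I_{1}-I_{2}$ with $I_{1}(u)=\frac{1}{p}\left\Vert u\right\Vert ^{p}$, observes that $I_{2}$ is of class $C^{1}$ on $L_{K}^{q_{1}}+L_{K}^{q_{2}}$ by \cite{BPR}, so that the compact embedding of Theorem \ref{THM(cpt)}(ii) gives $I_{2}^{\prime }(u_{n})u_{n}\rightarrow I_{2}^{\prime }(u)u$ and $I_{2}^{\prime }(u_{n})(u-u_{n})\rightarrow 0$; then $\left\Vert u_{n}\right\Vert ^{p}=I^{\prime }(u_{n})u_{n}+I_{2}^{\prime }(u_{n})u_{n}$ shows $\lim \left\Vert u_{n}\right\Vert $ exists, convexity of $I_{1}$ yields $I_{1}(u)\geq \lim I_{1}(u_{n})$, weak lower semicontinuity gives the reverse inequality, and uniform convexity of the norm of $X$ converts norm convergence plus weak convergence into strong convergence. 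This closes the argument in a few lines and is insensitive to the degeneracy or blow-up of $A$. Your route can also be made to work, but two remarks: first, as written it remains a sketch at precisely the delicate point; second, the cross term $\int A(|x|)\left\vert \nabla u\right\vert ^{p-2}\nabla u\cdot \nabla (u_{n}-u)\,dx$ vanishes simply because $v\mapsto \int A\left\vert \nabla u\right\vert ^{p-2}\nabla u\cdot \nabla v\,dx$ is a bounded linear functional on $D_{A}$ (H\"older with weights $A^{1/p^{\prime }}$ and $A^{1/p}$) and $u_{n}-u\rightharpoonup 0$ -- the pointwise estimates of Lemma \ref{lemdens2} and the compactness of Lemma \ref{A6} are not what is needed there. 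Everything else in your write-up (the truncation argument, the geometry, the AR boundedness) matches the paper's proof.
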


\begin{rem}
\label{RMK:thm:ex} In Theorem \ref{THM:ex}, the assumptions on $f$ need only to hold for $t\geq
0$. Indeed, all the hypotheses of the theorem still hold true if we replace $
f\left( t\right) $ with $\chi _{\mathbb{R}_{+}}\left( t\right) f\left(
t\right) $ ($\chi _{\mathbb{R}_{+}}$ is the characteristic function of $\mathbb{R
}_{+}$) and this can be done without restriction since the theorem concerns 
nonnegative critical points.

\end{rem}

The above result relies on assumption $\left( 
\mathcal{S}_{q_{1},q_{2}}^{\prime \prime }\right) $, which is quite abstract
but can be granted in concrete cases through Theorems \ref{THM0} and \ref{THM1}, which ensure such assumption for suitable ranges of
exponents $q_{1}$ and $q_{2}$ by explicit conditions on the potentials. 
As concerns examples of nonlinearities satisfying the hypotheses of Theorem \ref{THM:ex}, the simplest 
$f\in C\left(\mathbb{R};\mathbb{R}\right) $ such that $\left( f_{q_{1},q_{2}}\right) $ holds is 
\[f\left( t\right) =\min \left\{ \left| t\right| ^{q_{1}-2}t,\left| t\right|
^{q_{2}-2}t\right\} ,
\]
which also ensures $\left( f_{1}\right) $ if $
q_{1},q_{2}>p$ (with $\theta =\min \left\{ q_{1},q_{2}\right\} $). Another model example is 
\[
f\left( t\right) =\frac{\left| t\right| ^{q_{2}-2}t}{1+\left| t\right|
^{q_{2}-q_{1}}}\quad \text{with }1<q_{1}\leq q_{2},
\]
which ensures $\left( f_{1}\right) $ if $q_{1}>p$ (with $\theta =q_{1}$). Note that, in
both these cases, also $\left( f_{2}\right) $ holds true. Moreover, both of these functions $f$
become $f\left( t\right) =\left| t\right| ^{q-2}t$ if $q_{1}=q_{2}=q$. 

We now prove Theorem \ref{THM:ex}, starting with some lemmas.

\begin{lem}
\label{LEM:MPgeom}Assume the hypotheses of Theorem \ref{THM:ex}. Then there exist three constants $c_{1},c_{2}>0$ such that 
\begin{equation}
I\left( u\right) \geq \frac{1}{p}\left\| u\right\| ^{p}-c_{1}\left\|
u\right\| ^{q_{1}}-c_{2}\left\| u\right\| ^{q_{2}}
\qquad \text{for all }u\in X.
  \label{LEM:MPgeom: th}
\end{equation}
\end{lem}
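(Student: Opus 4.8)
The plan is to estimate the nonlinear term $\int_{\mathbb{R}^N} K(|x|) F(u)\,dx$ from above by $c_1\|u\|^{q_1} + c_2\|u\|^{q_2}$, after which the desired inequality follows immediately from the definition $I(u) = \frac1p\|u\|^p - \int_{\mathbb{R}^N} K(|x|)F(u)\,dx$. First I would observe that hypothesis $(f_{q_1,q_2})$ together with $F(t)=\int_0^t f(s)\,ds$ gives a pointwise bound $|F(t)| \leq (\mathrm{const.})\min\{|t|^{q_1},|t|^{q_2}\}$ for all $t\in\mathbb{R}$ (integrating the double-power bound on $f$; the constant absorbs the factors $1/q_1$, $1/q_2$). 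In particular, writing any $u\in X$ and splitting $\mathbb{R}^N$ according to where $|u|\leq 1$ and where $|u|>1$, one has $|F(u)| \leq (\mathrm{const.})(|u|^{q_1} + |u|^{q_2})$ pointwise, hence
\[
\int_{\mathbb{R}^N} K(|x|)\,|F(u)|\,dx \leq (\mathrm{const.})\left(\int_{\mathbb{R}^N} K(|x|)\,|u|^{q_1}\,dx + \int_{\mathbb{R}^N} K(|x|)\,|u|^{q_2}\,dx\right).
\]

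Next I would invoke the continuous embedding $X \hookrightarrow L_K^{q_1} + L_K^{q_2}$, which is available because the hypotheses of Theorem \ref{THM:ex} include $(\mathcal{S}_{q_1,q_2}^{\prime\prime})$, and $(\mathcal{S}_{q_1,q_2}^{\prime\prime})$ implies $(\mathcal{S}_{q_1,q_2}^{\prime})$, so Theorem \ref{THM(cpt)}(i) applies. More concretely, and perhaps more directly, the proof of Theorem \ref{THM(cpt)}(i) shows that $(\mathcal{S}_{q_1,q_2}^{\prime})$ yields a constant $C>0$ and radii $R_1<R_2$ such that
\[
\int_{B_{R_2}} K(|x|)\,|u|^{q_1}\,dx \leq C\|u\|^{q_1}, \qquad \int_{B_{R_2}^c} K(|x|)\,|u|^{q_2}\,dx \leq C\|u\|^{q_2}
\]
for all $u\in X$ (this is exactly estimates (\ref{pf1}), (\ref{pf2}), (\ref{pf3}) in that proof). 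Since on $B_{R_2}$ one has $\min\{|u|^{q_1},|u|^{q_2}\}\leq |u|^{q_1}$ and on $B_{R_2}^c$ one has $\min\{|u|^{q_1},|u|^{q_2}\}\leq |u|^{q_2}$, combining these with the pointwise bound on $F$ gives
\[
\int_{\mathbb{R}^N} K(|x|)\,|F(u)|\,dx \leq (\mathrm{const.})\left(\int_{B_{R_2}} K(|x|)\,|u|^{q_1}\,dx + \int_{B_{R_2}^c} K(|x|)\,|u|^{q_2}\,dx\right) \leq c_1\|u\|^{q_1} + c_2\|u\|^{q_2}.
\]

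Finally, inserting this into the definition of $I$ yields $I(u) \geq \frac1p\|u\|^p - c_1\|u\|^{q_1} - c_2\|u\|^{q_2}$ for all $u\in X$, which is (\ref{LEM:MPgeom: th}). There is no real obstacle here; the only mild subtlety is making sure the pointwise estimate on $F$ is correctly deduced from $(f_{q_1,q_2})$ with a uniform constant (handled by the case split $|t|\le 1$ versus $|t|>1$), and keeping track of the fact that the "$\min$" in $(f_{q_1,q_2})$ lets one use whichever power is convenient on each of the two regions $B_{R_2}$ and $B_{R_2}^c$. Everything else is a direct quotation of the continuous-embedding estimates already established in Section \ref{SEC:1}.
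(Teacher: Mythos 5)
Your proposal is correct and follows essentially the same route as the paper: bound $|F(t)|$ by $(\mathrm{const.})\min\{|t|^{q_1},|t|^{q_2}\}$ via $(f_{q_1,q_2})$, then split $\mathbb{R}^N$ into $B_{R_1}$, the annulus, and $B_{R_2}^c$, controlling each piece by $\mathcal{S}_0(q_1,R_1)$, Lemma \ref{Lem(corone)} with Lemma \ref{A6}, and $\mathcal{S}_\infty(q_2,R_2)$ respectively. The only (harmless) detour is your intermediate display bounding $\int K|F(u)|$ by the sum of the two global integrals, which could be infinite and is not needed; your final chain of inequalities, using the $\min$ to select $|u|^{q_1}$ on $B_{R_2}$ and $|u|^{q_2}$ on $B_{R_2}^c$, is exactly the paper's argument.
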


\proof
Let $i\in \left\{ 1,2\right\} $. By the monotonicity of $\mathcal{S}_{0}$
and $\mathcal{S}_{\infty }$, it is not restrictive to assume $R_{1}<R_{2}$
in hypothesis $\left( \mathcal{S}_{q_{1},q_{2}}^{\prime }\right) $. Then, by
lemmas \ref{Lem(corone)} and \ref{A6}, there exists a constant $c_{R_{1},R_{2}} >0$ 
such that for all $u\in X$ we have

\[
\int_{B_{R_{2}}\setminus B_{R_{1}}}K\left( \left| x\right| \right) \left|
u\right| ^{q_{1}}dx\leq C_{R_{1},R_{2}}\left\| u\right\|
^{q_{1}}. 
\]

Therefore, by the hypotheses on $f$ and the definitions of $\mathcal{S}_{0}$ and $
\mathcal{S}_{\infty }$, we obtain 
\begin{eqnarray}
&&\left| \int_{\mathbb{R}^{N}}K\left( \left| x\right| \right) F\left( u\right) dx\right| \leq
\nonumber
\\
&\leq & C \int_{\mathbb{R}^{N}}K\left( \left| x\right| \right) \min
\left\{ \left| u\right| ^{q_{1}},\left| u\right| ^{q_{2}}\right\}
dx \nonumber \\
&\leq & C \left( \int_{B_{R_{1}}}K\left( \left| x\right| \right)
\left| u\right| ^{q_{1}}dx+\int_{B_{R_{2}}^{c}}K\left( \left| x\right|
\right) \left| u\right| ^{q_{2}}dx+\int_{B_{R_{2}}\setminus
B_{R_{1}}}K\left( \left| x\right| \right) \left| u\right| ^{q_{1}}dx\right)
 \nonumber \\
&\leq & C \left( \left\| u\right\| ^{q_{1}}\mathcal{S}_{0}\left(
q_{1},R_{1}\right) +\left\| u\right\| ^{q_{2}}\mathcal{S}_{\infty }\left(
q_{2},R_{2}\right) +C_{R_{1},R_{2}}\left\| u\right\|
^{q_{1}}\right)   
\\
&=&C_{1}\left\| u\right\| ^{q_{1}}+C_{2}\left\| u\right\|
^{q_{2}},  \nonumber
\end{eqnarray}
where the constants $c_{1}$ and $c_{2}$ are independent of 
$u$. This yields (\ref{LEM:MPgeom: th}). 

\endproof

\begin{lem}
\label{LEM:PS}Under the assumptions of Theorem \ref{THM:ex}, the functional $
I:X\rightarrow \mathbb{R}$ satisfies the Palais-Smale condition.
\end{lem}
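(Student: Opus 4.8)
\emph{Plan.} I would follow the classical scheme for $p$-Laplacian type functionals, the only non-routine ingredient being that the compactness of the lower-order term is provided by Theorem~\ref{THM(cpt)}(ii) rather than by a Rellich type embedding. So let $\{u_n\}\subseteq X$ be a Palais--Smale sequence, i.e.\ $\{I(u_n)\}$ is bounded and $I'(u_n)\to 0$ in $X'$.

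\emph{Step 1: boundedness of $\{u_n\}$.} Assumption $(f_1)$ gives $\frac1\theta f(t)t-F(t)\geq 0$ for every $t$, so that
\[
I(u_n)-\frac1\theta I'(u_n)u_n=\Bigl(\frac1p-\frac1\theta\Bigr)\|u_n\|^p+\int_{\mathbb{R}^N}K(|x|)\Bigl(\frac1\theta f(u_n)u_n-F(u_n)\Bigr)dx\geq\Bigl(\frac1p-\frac1\theta\Bigr)\|u_n\|^p.
\]
Since $\{I(u_n)\}$ is bounded and $\|I'(u_n)\|_{X'}\to 0$, the left-hand side is $\leq C+\|u_n\|$ for large $n$; as $\theta>p>1$, this yields $\sup_n\|u_n\|<\infty$.

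\emph{Step 2: weak limit and the nonlinear term.} For $1<p<\infty$ the space $D_A$ is reflexive, because $u\mapsto\nabla u$ embeds it isometrically onto a closed subspace of $L^p(\mathbb{R}^N,A(|x|)\,dx)^N$ (cf.\ Lemma~\ref{lemdens2}), hence $X$ is reflexive as well and, up to a subsequence, $u_n\rightharpoonup u$ in $X$. By $(\mathcal{S}_{q_1,q_2}'')$ and Theorem~\ref{THM(cpt)}(ii), $X$ embeds compactly into $L_K^{q_1}+L_K^{q_2}$, so $u_n\to u$ strongly there; then, by $(f_{q_1,q_2})$ and the results of \cite{BPR} on Nemytski\u{\i} operators on sums of Lebesgue spaces, $v\mapsto f(v)$ is continuous from $L_K^{q_1}+L_K^{q_2}$ into its dual and maps $\{u_n\}$ to a strongly convergent sequence. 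Since $u_n-u\rightharpoonup 0$ in $X$ is bounded in $L_K^{q_1}+L_K^{q_2}$, it follows that $\int_{\mathbb{R}^N}K(|x|)f(u_n)(u_n-u)\,dx\to 0$ and $\int_{\mathbb{R}^N}K(|x|)f(u)(u_n-u)\,dx\to 0$.

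\emph{Step 3: strong convergence in $X$.} From $\|I'(u_n)\|_{X'}\to 0$ and the boundedness of $\{u_n-u\}$ we get $I'(u_n)(u_n-u)\to 0$, while $I'(u)(u_n-u)\to 0$ since $u_n-u\rightharpoonup 0$. Subtracting the explicit formulas \eqref{PROP:diff: I'(u)h=} for $I'(u_n)(u_n-u)$ and $I'(u)(u_n-u)$ and discarding the $K$-terms by Step 2,
\[
\int_{\mathbb{R}^N}A(|x|)\bigl(|\nabla u_n|^{p-2}\nabla u_n-|\nabla u|^{p-2}\nabla u\bigr)\cdot\nabla(u_n-u)\,dx+\int_{\mathbb{R}^N}V(|x|)\bigl(|u_n|^{p-2}u_n-|u|^{p-2}u\bigr)(u_n-u)\,dx\to 0.
\]
Both integrands are pointwise nonnegative by the elementary monotonicity inequality for $\xi\mapsto|\xi|^{p-2}\xi$, hence each integral tends to $0$ separately, and the quantitative version of that inequality then gives $\|u_n-u\|\to 0$: for $p\geq 2$ directly via $(|a|^{p-2}a-|b|^{p-2}b)\cdot(a-b)\geq c_p|a-b|^p$; for $1<p<2$ by first estimating $|a-b|^p$ through H\"older in terms of $\bigl((|a|^{p-2}a-|b|^{p-2}b)\cdot(a-b)\bigr)^{p/2}(|a|+|b|)^{(2-p)p/2}$ and using the $L^p(A\,dx)$- and $L^p(V\,dx)$-boundedness of $\{u_n\}$. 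This establishes the Palais--Smale condition.

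\emph{Main difficulty.} The two delicate points are: verifying via the two-exponent machinery of \cite{BPR} that $(f_{q_1,q_2})$ together with the compact embedding of Theorem~\ref{THM(cpt)}(ii) truly makes the Nemytski\u{\i} term compact on the sum space $L_K^{q_1}+L_K^{q_2}$ (the distinct growths $q_1,q_2$ are precisely what forbids working in a single Lebesgue space); and upgrading the vanishing of the monotone form in Step 3 to strong convergence in the degenerate subquadratic range $1<p<2$ — equivalently, the $(S_+)$ property of the operator $u\mapsto-\mathrm{div}(A(|x|)|\nabla u|^{p-2}\nabla u)+V(|x|)|u|^{p-2}u$ on $X$.
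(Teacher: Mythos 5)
Your proposal is correct, and Steps 1--2 (boundedness via $(f_1)$ with $\theta>p$, extraction of a weak limit, strong convergence in $L_K^{q_1}+L_K^{q_2}$ by Theorem~\ref{THM(cpt)}(ii), and the treatment of the Nemytski\u{\i} term via \cite{BPR}) coincide with the paper's argument. Where you genuinely diverge is Step 3. You prove strong convergence through the $(S_+)$-property of the operator: you show that the monotone form $\int A\bigl(|\nabla u_n|^{p-2}\nabla u_n-|\nabla u|^{p-2}\nabla u\bigr)\cdot\nabla(u_n-u)\,dx+\int V\bigl(|u_n|^{p-2}u_n-|u|^{p-2}u\bigr)(u_n-u)\,dx$ vanishes and then invoke the quantitative vector inequalities, split into the cases $p\geq 2$ and $1<p<2$. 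The paper instead decomposes $I=I_1-I_2$ with $I_1(u)=\frac1p\|u\|^p$ and $I_2$ of class $C^1$ on $L_K^{q_1}+L_K^{q_2}$, deduces that $\lim_n\|u_n\|$ exists from $\|u_n\|^p=I'(u_n)u_n+I_2'(u_n)u_n$, squeezes $\|u\|^p=\lim_n\|u_n\|^p$ between weak lower semicontinuity and the convexity inequality $I_1(u)-I_1(u_n)\geq I_1'(u_n)(u-u_n)=o(1)$, and concludes by uniform convexity of the norm (which holds since $\|\cdot\|$ is the $L^p$-norm of the pair $(A^{1/p}\nabla u,\,V^{1/p}u)$). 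The paper's convexity route is softer: it treats all $1<p<N$ uniformly and never needs the pointwise monotonicity estimates, whereas your route is the more classical one and requires the subquadratic case $1<p<2$ to be handled separately via H\"older, exactly as you indicate; both are complete proofs. One cosmetic remark: your justification of reflexivity of $X$ (isometric embedding onto a closed subspace of $L^p(A\,dx)^N\times L^p(V\,dx)$) is a point the paper leaves implicit, so including it is a small improvement rather than a deviation.
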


\proof

Let $\left\{ u_{n}\right\} $ be a sequence in $X$ such that $\left\{
I\left( u_{n}\right) \right\} $ is bounded and $I^{\prime }\left(
u_{n}\right) \rightarrow 0$ in $X^{\prime }$. Hence 
\[
\frac{1}{p}\left\| u_{n}\right\| ^{p}-\int_{\mathbb{R}^{N}}K(|x|) F\left( u_{n}\right) dx=O\left( 1\right) \quad \text{and}\quad \left\|
u_{n}\right\| ^{p}-\int_{\mathbb{R}^{N}}K(|x|) f\left( u_{n}\right)
u_{n}dx=o\left( 1\right) \left\| u_{n}\right\| . 
\]
As $f$ satisfies $\left( f_{1}\right) $, we get 
\[
\frac{1}{p}\left\| u_{n}\right\| ^{p}+O\left( 1\right) =\int_{\mathbb{R} 
^{N}} K(|x|)F\left( u_{n}\right)  dx\leq \frac{1}{\theta }\int_{%
\mathbb{R}^{N}}K(|x|)f\left( u_{n}\right) u_{n}dx=\frac{1}{\theta }%
\left\| u_{n}\right\| ^{p}+o\left( 1\right) \left\| u_{n}\right\| , 
\]
which implies that $\left\{ \left\| u_{n}\right\| \right\} $ is bounded
since $\theta >p$. 
Now, thanks to assumption $\left( \mathcal{S}_{q_{1},q_{2}}^{\prime \prime
}\right) $, we apply Theorem \ref{THM(cpt)} to deduce the existence of $u\in
X$ such that (up to a subsequence) $u_{n}\rightharpoonup u$ in $X$
and $u_{n}\rightarrow u$ in $L_{K}^{q_{1}}+L_{K}^{q_{2}}$. Setting 
\[
I_{1}\left( u\right) :=\frac{1}{p}\left\| u\right\| ^{p}\quad \text{and}
\quad I_{2}\left( u\right) :=I_{1}\left( u\right) -I\left( u\right) 
\]
for brevity, we have that $I_{2}$ is of class $C^{1}$ on $%
L_{K}^{q_{1}}+L_{K}^{q_{2}}$ by \cite[Proposition 3.8]{BPR} and therefore we
get $\left\| u_{n}\right\| ^{p}=I^{\prime }\left( u_{n}\right)
u_{n}+I_{2}^{\prime }\left( u_{n}\right) u_{n}=I_{2}^{\prime }\left(
u\right) u+o\left( 1\right) $. Hence $\lim_{n\rightarrow \infty }\left\|
u_{n}\right\| $ exists and one has $\left\| u\right\| ^{p}\leq
\lim_{n\rightarrow \infty }\left\| u_{n}\right\| ^{p}$ by weak lower
semicontinuity. Moreover, the convexity of $I_{1}:X\rightarrow \mathbb{R}$
implies 
\[
I_{1}\left( u\right) -I_{1}\left( u_{n}\right) \geq I_{1}^{\prime }\left(
u_{n}\right) \left( u-u_{n}\right) =I^{\prime }\left( u_{n}\right) \left(
u-u_{n}\right) +I_{2}^{\prime }\left( u_{n}\right) \left( u-u_{n}\right)
=o\left( 1\right) 
\]
and thus 
\[
\frac{1}{p}\left\| u\right\| ^{p}=I_{1}\left( u\right) \geq
\lim_{n\rightarrow \infty }I_{1}\left( u_{n}\right) =\frac{1}{p}%
\lim_{n\rightarrow \infty }\left\| u_{n}\right\| ^{p}. 
\]
So $\left\| u_{n}\right\| \rightarrow \left\| u\right\| $ and one concludes
that $u_{n}\rightarrow u$ in $X$ by the uniform convexity of the norm.%
\endproof


\proof[Proof of Theorem \ref{THM:ex}.] 
Assume the hypotheses of the theorem, together with the following non-restrictive additional condition:
$f(t)=0$ for $t<0$.
We want to apply the Mountain-Pass Theorem. To this end, from (\ref{LEM:MPgeom: th}) of Lemma \ref
{LEM:MPgeom} we deduce that, since $q_{1},q_{2}>p$, there
exists $\rho >0$ such that 
\begin{equation}
\inf_{u\in X,\,\left\| u\right\| =\rho }I\left( u\right) >0=I\left(
0\right) .  \label{mp-geom}
\end{equation}
Therefore, taking into account Lemma \ref{LEM:PS}, we need only to check
that $\exists \bar{u}\in X$ such that $\left\| \bar{u}\right\| >\rho $
and $I\left( \bar{u}\right) <0$. To this end, from assumption $\left( f_{1}\right) $ and $
\left( f_{2}\right) $ we infer that 
\[
F\left( t\right) \geq \frac{F\left( t_{0}\right) }{t_{0}^{\theta }}
t^{\theta }\text{ for all }t\geq t_{0}. 
\]
We then fix a non negative
function $u_{0}\in C_{c}^{\infty }( \mathbb{R}^{N} \backslash \{0\}  )$ such that the set $\{x\in \mathbb{R}^{N}:u_{0}\left(
x\right) \geq t_{0}\}$ has positive Lebesgue measure. Hence, since $\left( f_{1}\right) $ and $\left( f_{2}\right) $ ensure
that $F(t)\geq 0$ for all $t$ and $F\left( t_{0}\right) >0$, for every $\lambda >1$ we get 
\begin{eqnarray*}
\int_{\mathbb{R}^{N}}K(|x|) F\left( \lambda u_{0}\right) dx &\geq
&\int_{\left\{ \lambda u_{0}\geq t_{0}\right\} }K(|x|) F\left(\lambda u_{0}\right) dx\geq \frac{\lambda ^{\theta }}{t_{0}^{\theta }}
\int_{\left\{ \lambda u_{0}\geq t_{0}\right\} }K(|x|) F\left( t_{0}\right) u_{0}^{\theta }dx \\
&\geq &\frac{\lambda ^{\theta }}{t_{0}^{\theta }}\int_{\left\{ u_{0}\geq
t_{0}\right\} }K(|x|) F\left( t_{0}\right) u_{0}^{\theta }dx\geq
\lambda ^{\theta }\int_{\left\{ u_{0}\geq t_{0}\right\} }K(|x|) F\left( t_{0}\right) dx>0.
\end{eqnarray*}
Since $\theta >p$, this gives 
\[
\lim_{\lambda \rightarrow +\infty }I\left( \lambda u_{0}\right) \leq
\lim_{\lambda \rightarrow +\infty }\left( \frac{\lambda ^{p}}{p}\left\|
u_{0}\right\| ^{p}-\lambda ^{\theta }\int_{\left\{ u_{0}\geq t_{0}\right\}
} K(|x|) F\left( t_{0}\right) dx\right) =-\infty . 
\]
As a conclusion, we can take $\bar{u}=\lambda u_{0}$ with $\lambda $
sufficiently large and the Mountain-Pass Theorem provides the existence of a
nonzero critical point $u\in X$ for $I$. Since the additional assumption $f(t)=0$ for $t<0$
implies $
I^{\prime }\left( u\right) u_{-}=-\left\| u_{-}\right\| ^{p}$ (where $
u_{-}\in X$ is the negative part of $u$), one concludes that $u_{-}=0$,
i.e., $u$ is nonnegative.
\endproof

\section{Examples}\label{SEC:EX}

\noindent In this section we give some examples that might help to understand what is new in our results. We will make a comparison, in concrete cases, between our results and those of \cite{Su-Wang}. In that paper the authors prove some compactness theorems which are used to prove existence results for equation (\ref{EQ}), where $f$ is a power or a sum of powers. We will show some cases where the results of \cite{Su-Wang} do not apply, while our results give existence of solutions. In all our example we look for a nonlinearity defined by $f(t)= \min \{ t^{q_1 -1} , t^{q_2 -1} \}$, and we will see how to choose $p<q_1 \leq q_2$ in such a way to get existence results for problem (\ref{EQ}).

\begin{exa} \label{ex1}
Let $A, V, K$ be as follows:

$$A(r)= \min \{ r^{1/2} , r^{3/2} \}, \quad V(r)= \min \left\{ 1, r^{-3/2}  \right\}, \quad K(r)= \max \{ r^{1/2}, r^{3/2}  \}.$$

\noindent Assume $3/2 <p \leq 2$. We first show that in this case the results of \cite{Su-Wang} do not apply. The embedding theorems of that paper are Theorems 3.2, 3.3 and 3.4. If we compute the coefficients $q^*$ and $q_*$ of \cite{Su-Wang}, we easily obtain $ q_{*}= \frac{p(2N+3)}{2N+1-2p}$, $ q^{*}= \frac{p(2N+1)}{2N+3-2p}$ and this, together with $p\leq 2$, implies 
$q^* < q_*$, so that Theorem 3.2 of \cite{Su-Wang} cannot be applied, because it needs $ q_* < q^*$. One easily verifies that also the hypotheses of Theorems 3.3 and 3.4 of \cite{Su-Wang} are not satisfied. To apply our results, we set $\beta_0 = \beta_{\infty}=0$, $\alpha_0 = 1/2$, $\alpha_{\infty}=3/2$, $a_0= 3/2$, $a_{\infty}=1/2$. Note that condition $a_0 , a_{\infty} \in (p-N,p]$
is satisfied. We apply Theorems \ref{THM0} and \ref{THM1}, and we compute 

$$q^{*} (a_0 , \alpha_0, \beta_0 ) = \frac{p(2N+1)}{2N+3-2p}, \quad  q_{*} (a_{\infty} , \alpha_{\infty}, \beta_{\infty} )=\frac{p(2N+3)}{2N+1-2p}.$$

\noindent Notice that these are the same value obtained above, following \cite{Su-Wang}. Notice also that $q^{*} (a_0 , \alpha_0, \beta_0 ) >p$ is equivalent to $p> 1 $. Applying our results, we deduce that if we take $q_1 ,q_2$ such that

$$p<q_1 < \frac{p(2N+1)}{2N+3-2p}< \frac{p(2N+3)}{2N+1-2p}<q_2 ,$$

\noindent then $\left(\mathcal{S}_{q_{1},q_{2}}^{\prime \prime }\right) $ 
holds and we get an existence result for the equation (\ref{EQ}) with any nonlinearity satisfying 
$\left( f_{q_{1},q_{2}}\right) $ .
\end{exa}

\begin{exa} \label{ex2}
Assume $N\geq 4$ and choose the functions $A, V, K$ as follows:

$$A(r)=  \max \left\{ r^{-2}, r^{-1} \right\} , \quad V(r)= e^{2r} , \quad K(r)= e^r  .$$

\noindent In this case the results of \cite{Su-Wang} do not apply because of the exponential growth of the potential $K$. Assume $1<p<N-2$. In order to apply Theorems \ref{THM0} and \ref{THM1}, we can choose $a_0= -2 $, $a_{\infty} =-1$, 
$\beta_0 =\alpha_{0}= \alpha_{\infty}=0$, $\beta_{\infty}=1/2$. 
Notice that condition $a_0 , a_{\infty} \in (p-N,p]$ is satisfied. 
We get $$q^* (a_0, \alpha_0 , \beta_0 ) = \frac{pN}{N-p-2}\quad \text{and}\quad 
q^* (a_{\infty}, \alpha_{\infty} , \beta_{\infty} ) =\frac{pN}{N-p-1},$$ where we have
$
p<\frac{pN}{N-p-1}<\frac{pN}{N-p-2}.
$
Then $\left(\mathcal{S}_{q_{1},q_{2}}^{\prime \prime }\right) $ 
holds and we get an existence result for the equation (\ref{EQ}) with any nonlinearity satisfying 
$\left( f_{q_{1},q_{2}}\right) $ 
provided that $$p<q_1 < \frac{pN}{N-p-2}\quad \text{and}\quad  q_2 > \frac{pN}{N-p-1}.$$ In particular we can take a power nonlinearity $f(t)= t^{q-1}$ for $q \in \left( \frac{pN}{N-p-1}, \frac{pN}{N-p-2} \right)$. 

\end{exa}

\end{document}